\newcommand{\rom}[1]{\uppercase\expandafter{\romannumeral #1\relax}}
\numberwithin{equation}{section}
\newtheorem{theorem}{Theorem}[section]
\newtheorem{lemma}[theorem]{Lemma}
\newtheorem{remark}[theorem]{Remark}
\newtheorem{condition}[theorem]{Assumption}
\newtheorem{algorithm}[theorem]{Algorithm}
\newtheorem{gallarynotation}[theorem]{Gallery of Notations}
\newcommand{\argmin}{\operatornamewithlimits{argmin}}
\newcommand{\argmax}{\operatornamewithlimits{argmax}}
\def\thetheorem {{\arabic{section}.\arabic{theorem}}}
\newcommand*\circled[1]{\tikz[baseline=(char.base)]{
            \node[shape=circle,draw,inner sep=0.8pt] (char) {#1};}}
\def\a{\alpha}
\def\b{\beta}
\def\d{\delta}
\def\e{\epsilon}
\def\g{\gamma}
\def\lam{\lambda}
\def\o{\omega}
\def\p{\phi}
\def\s{\sigma}
\def\th{\theta}
\def\th{\theta}
\def\L{\Lambda}
\def\G{\Gamma}
\def\O{\Omega}
\def\bz{{\bf{z}}}
\def\bg{\bm{\gamma}}
\def\hf{\hat{f}}
\def\R{\mathbb R}
\def\N{\mathbb N}
\def\E{\mathbb E}
\def\P{\mathbb{P}}
\def\mL{\mathcal{L}}
\def\mP{\mathcal{P}}
\def\mO{\mathcal{O}}
\def\mF{\mathcal{F}}
\def\mQ{\mathcal{Q}}
\def\mS{\mathcal{S}}
\def\l{\left}
\def\r{\right}
\def\la{\left\langle}
\def\ra{\right\rangle}
\def\ll{\left\lVert}
\def\rl{\right\rVert}
\def\lv{\left\lvert}
\def\rv{\right\rvert}
\def\({\left(}
\def\){\right)}
\def\[{\left[}
\def\]{\right]}
\def\pt{\partial}
\def\qd{\quad}
\def\t{\tilde}
\def\sM{\sqrt{M}}
\def\h{\hat}
\def\bE{\bar{E}}
\def\pinf{\p^\infty}
\def\Einf{E^\infty}
\def\einf{E^\infty}
\def\thi{\th_i}
\begin{document}

\title{The Vlasov-Fokker-Planck Equation with High Dimensional Parametric Forcing Term}

\author{ 
Shi Jin\footnote{School of Mathematical Sciences, Institute of Natural Sciences, MOE-LSEC and SHL-MAC, Shanghai Jiao Tong University, Shanghai 200240, China. (shijin-m@sjtu.edu.cn) }\ \ \ \ \  Yuhua Zhu\footnote{Department of Mathematics, University of Wisconsin-Madison, Madison, WI 53706, USA. (yzhu232@wisc.edu)}  \ \  \  \ \   
Enrique Zuazua\footnote{Chair in Applied Analysis, Alexander von Humboldt-Professorship, Department of Mathematics Friedrich-
Alexander-Universit\"at Erlangen-N\"urnberg, 91058 Erlangen, Germany.}  
\footnote{Chair of Computational Mathematics, Fundaci\'on Deusto, Av. de las Universidades 24, 48007 Bilbao, Basque Country, Spain.} 
\footnote{Departamento de Matem\'aticas, Universidad Aut\'onoma de Madrid, 28049 Madrid, Spain. }
}

\date{}
\maketitle
%%%%%%%%%%%%%%%%%%%%%%%%%%%%%%%%%%%%%%%%%%%%%%%%%%%%%%%%%%%%%%%%%%%%%%%%%%%%%%%%%%%%%%%%%%%%%%%%%

\begin{abstract}
  We consider the Vlasov-Fokker-Planck equation with random electric field
  where the random field is parametrized by  countably many infinite random variables due to uncertainty.  
  At the theoretical level, with suitable assumption on the anisotropy of the randomness, adopting the technique employed in elliptic PDEs \cite{cohen2015approximation}, we prove the best N approximation in the random space breaks the dimension curse and the convergence rate is faster than the Monte Carlo method. For the numerical method, based on the adaptive sparse polynomial interpolation (ASPI) method introduced in \cite{chkifa2014high}, we develop a residual based adaptive sparse polynomial interpolation (RASPI) method which is more efficient for multi-scale linear kinetic equation, when using numerical schemes that
  are time dependent and implicit. Numerical experiments
  show that  the numerical error of the RASPI decays faster than the
  Monte-Carlo method and is also dimension independent.
\end{abstract}

{\small
{\bf Key words.}  Dimension curse, Kinetic equation, Fokker-Planck operator, Hypocoercivity, Adaptive sparse polynomial interpolation, Residual based greedy algorithm

%{\bf AMS subject classifications.}

}

\section{Introduction}
\label{sec:Intro}
We consider the Vlasov-Fokker-Planck (VFP) equation with a random electric field due
to uncertainty.  Typically uncertainty is modeled by a stochastic field,
which by the Karhunen-L\`oeve approximation is parametrized by  countably  infinite random variables \cite{loeve1978probability1, loeve1978probability2}. One of the difficulties in the development of  numerical methods for such
problems is the possible curse of dimension.  Sampling methods, such as
the Monte-Carlo methods, are often used,  which are  dimension
independent. However, these methods suffer from the low convergence rate,
as the numerical errors are of  $O(N^{-\frac{1}{2}})$ with $N$ sample points. In this paper we seek a more efficient numerical method based on best $N$ approximation and greedy algorithms, originally developed for elliptic equations,
for uncertain VFP equation in which the electric field depends on high
dimensional random variables in order to achieve a numerical convergence rate
faster than the Monte-Carlo methods.

There are two separated parts in this paper. In Section \ref{theoretical}, we reviewe  the best N approximation, and  prove  the convergence rate of it when
applied to the VPF equation, under suitable assumption on random field.
We point out in Section \ref{numerical}, the best N approximation is a non-linear approximation  hard to implement in practice. Therefore, for the numerical method, we develop a residual based adaptive sparse polynomial interpolation (RASPI) method, which is shown in Section \ref{numerical eq} by different numerical experiments to be efficient in practice and indeed converges faster than the Monte Carlo method.

Our theoretical results in Section \ref{theoretical} are based on the results in a series of paper \cite{chkifa2015breaking, cohen2010convergence, cohen2011analytic, cohen2015approximation}. For uniformly distributed random variables,
we seek approximate solutions in a finite dimensional space spanned by the  Legendre polynomial basis, that is $u(\bz)\approx u_\L(\bz) = \sum_{\nu\in\L}\o_\nu L_\nu(\bz)$ for $\#(\L) = N$, where $\#(\L)$ is the number of elements in $\Lambda$. The best N approximation is to truncate the basis according to the $N$ largest coefficient $\o_\nu$, so that the mean square error, which is represented by $\sum_{\nu\notin\L}| \o_\nu |^2$, can be as small as possible. It is summarized in \cite{cohen2015approximation} that holomorphy and anisotropy of the solution in the random space implies that the best N approximation can  break the curse of  dimension. Furthermore it converges faster than the usual Monte Carlo Method. It has been successfully applied to elliptic equations, including parametric PDEs,  control problems,  inverse problems, etc \cite{hansen2013analytic, hansen2013sparse, kunoth2013analytic, schillings2013sparse, schillings2014sparsity, schwab2012sparse, DJL19, liu2018hypocoercivity}. However, since this result requires analyticity of the solution in the random space, it hasn't been widely used in other PDEs, such as  kinetic and related equations. Thanks to recent studies on the regularity of the solution to most of the kinetic equations using hypocoercivity
of the kinetic operators, which give rise to high order  regularity in the random space
for kinetic equations with uncertainties, if the (random)  initial data and
(random) coefficients
have such regularities,  \cite{jin2016august,li2016uniform, ZhuJin18, Jintwophase, Zhu2017BE, Jin-LiuL}, we first extend such reegularity study for the
VFP equation  with {\it infinite} dimensional random variables, which
gives  the {\it first} error estimate for uncertain kinetic equation that is
{\it independent} of the dimension of the random variables. While for moderately high dimensionality, sparse grids were used for uncertain kinetic equations \cite{hu2019stochastic}, here we are interested in much high dimensions in the the random space.

Based on the theoretical results, we develop a numerical method in Section \ref{numerical}, which is then applied in Section \ref{numerical eq}  to several examples to verify that it indeed successfully breaks the curse of dimension. The numerical method we develop is a residual based adaptive sparse polynomial interpolation (RASPI) method, which combines the idea from the adaptive sparse polynomial interpolation (ASPI) method and the residual based greedy search. The ASPI method, introduced in \cite{chkifa2014high} in  line of \cite{nobile2008sparse,nobile2008anisotropic}, is a non-intrusive method that computes a polynomial approximation by interpolation of the solution map at $N$ well chosen points. In particular it could be applied in the case when the exact model is not known,
and only the numerical solver is given. However, in order to find the  ``well chosen" points, one needs to calculate the solution at number of sample points much bigger than $N$, which can be very costly when the PDE is time dependent and the numerical scheme is in implicit form. Actually for most multi-scale kinetic equations, one indeed needs to use  implicit schemes due to the presence of small parameter or numerical stiffness \cite{jin2010asymptotic, jin2011asymptotic,filbet2010class,jin2011class}. This means one needs to invert the approximate kinetic operator, for each
mesh point, which is a large matrix in each time step. As will be shown later, the inversion of a large matrix can be avoided by computing the residual of the PDE instead. The idea of using residual of a PDE has been used in greedy algorithm \cite{rozza2007reduced} for parametric PDEs and recently also applied to parametric control problems \cite{lazar2016greedy, hernandez2017greedy}, but
mainly  in low dimensions. Although such a method may end up using even less basis compared to polynomial approximations, their offline stage is potentially very costly, especially in high dimensions. The RASPI method combines the advantages of both methods, so that one can save much computational cost by calculating the residual of the PDE instead of the numerical solution of the PDE. At the same time, the offline stage is still efficient in high dimensions.

We would like  to point out that although all numerical experiments in Section \ref{numerical eq} verify the fast decay rate independent of the dimension,
a rigorous proof that the ASPI and the RASPI can achieve the convergence rate we obtained in Section 3 by the best N approximation is still an open question.

%There are plenty of other numerical methods for parametric PDE, we will mention some of them here. Other non-intrusive methods includes least-square projection methods \cite{chkifa2015discrete, doostan2009least, doostan2011non}, pseudo-spectral methods \cite{constantine2012sparse, xiu2007efficient},  Qausi Monte Carlo approximation \cite{kuo2015multi, kuo2012quasi}. Intrusive method includes Galerkin projection \cite{beck2012optimal, beck2014convergence, cohen2010convergence}, recursive method for truncated Taylor series \cite{chkifa2013sparse}. Reduced basis methods includes the greedy algorithm  \cite{rozza2007reduced} and some other related proper othogonal decomposition method [52].

%\textcolor{blue}{
%In particular, the guaranteed rate n?s based on these a priori bounds may be too pessimistic, and a better rate could be obtained using an adaptive method. However, the convergence analysis of adaptive methods is usually much more delicate than that of their non-adaptive counterparts. We give examples of adaptive strategies both for interpolation in §5 and Taylor approximations in §6, convergence analysis being available only for the latter.
%}

Here is the structure of the paper. In Section \ref{background}, we introduce the VFP equation with random electric field. In Section \ref{theoretical}, we prove the best N approximation converges to the solution with an error  of $O\(N^{-s}\)$, for $s>\frac{1}{2}$, based on the result in \cite{cohen2015approximation}. We then in Section \ref{numerical} give an improved numerical method RASPI based on the ASPI introduced in \cite{chkifa2014high}, and provide explicitly the computational cost it saved compared to the ASPI.  Numerical experiments are conducted in Section \ref{numerical eq} to show the convergence rates for various electric fields.  The paper is concluded in Section \ref{Concl}.

\begin{gallarynotation}
\emph{
Define $\O= [0,l]\times\R$ to be the domain for $x, v$. The following norms are defined in $\O$:
\begin{itemize}
\item [-] $\displaystyle \ll h \rl^2 = \int_\O h^2 \,dxdv$.
\item [-] $\displaystyle \ll h \rl_\o^2 = \ll h \rl^2 + \ll \pt_v h\rl^2 + \ll vh \rl^2$.
\item [-] $\displaystyle \ll h \rl_V^2 = \ll h \rl^2 + \ll \pt_x h\rl^2$.
\item [-] $\displaystyle \ll h \rl_{V,\o}^2 = \ll h \rl_\o^2 + \ll \pt_x h\rl_\o^2$.
\end{itemize}
For the metric space $V$, accordingly, one has the following Poincare inequality, 
\begin{align}
	\ll \pt_xh \rl^2 \geq C_s\ll h \rl_V^2,\qd C_s\leq\frac{1}{2},\qd \text{for } \forall h \in V\qd \text{and } \int h dx = 0.
	\label{sobolev const}
\end{align}
Define $U$ as the parameter space for $\bz$, and assume $\bz$ is a random vector with probability density function $\rho(\bz)$, so it has a corresponding weighted $L^2$ norm in the parameter space,
\begin{align}
	\ll h \rl^2_{L^2(U,d\rho)}= \int_U h^2\, d\rho = \int_U h^2\rho(\bz) \,d\bz.
\end{align}
The following norms are defined in $\O\times U$:
\begin{itemize}
\item [-] $\displaystyle \ll h \rl_{L^\infty(U,V)} =\sup_{\bz\in U}\ll h(\bz) \rl_V $;
\item [-] $\displaystyle \ll h \rl_{L^\infty(U,W^{1,\infty}_x)} =\sup_{\bz\in U}\(\ll h(\bz) \rl_{L^\infty_x} + \ll \pt_xh(\bz) \rl_{L^\infty_x}\) $;
\item [-] $\displaystyle \ll h \rl^2_{L^2(U,V,d\rho)} =\int \ll h(\bz) \rl^2_V  d\rho$.
\end{itemize}
Define $\mF$ to be the set of all sequences $\nu = (\nu_j)_{j\geq1}$ of nonnegative integers such that only finite many $\nu_j$ are non-zeros. We call $\L\subset \mF$ an index set. The following notations are defined for index $\nu$:
\begin{itemize}
\item [-] $\displaystyle J(\nu) = \sup\{j: \nu_j >0\}$, \qd $\displaystyle J(\L) = \sup\{J(\nu): \nu \in\L\}$;
\item [-] $\displaystyle |\nu| = \sum_{j\geq1}\nu_j$;
\item [-] $\displaystyle \nu! = \prod_{j\geq1}\nu_j!$;
\item [-] $\displaystyle \pt^\nu_\bz h = \pt^{\nu_1}_{z_1} \cdots \pt^{\nu_n}_{\bz_n}h$, for $n = J(\nu)$;
\item [-] For infinite dimensional vector $d = (d_1, d_2, \cdots)$, $\displaystyle d^\nu = \prod_{j\geq1}d_j^{\nu_j}$.
\end{itemize}
}
\end{gallarynotation}

%%%%%%%%%%%%%%%%%%%%%%%%%%%%%%%%%%%%%%%%%%%%%%%%%%%%%%%%%%%%%%%%%%%%%%%%%%%%%%%%%%%%%%%%%%% %%%%%

\section{The parametric Vlasov-Fokker-Planck equation}
\label{background}
Consider the following parametric  Vlasov-Fokker-Planck equation,
\begin{align}
\e\pt_t f + v\pt_xf - \pt_x\p\pt_v f = \frac{1}{\e}\mQ f, \qd x,v \in \O = (0,l) \times \R
\label{VFP}
\end{align}
for $\e\leq 1$ (without loss of generality), with periodic condition on $x\in [0,l]$, and initial data $f(0,x,v,z) = f_0(x,v,z)$.  Here $f$ represents the probability density distribution of particles at position $x$ with velocity $v$, $\e$ represents the rescaled mean free path.  $\mF$ is the Fokker Planck operator that reads 
\begin{align}
	\mQ f = \pt_v\(M\pt_v\(\frac{f}{M}\)\),
\end{align} 
with  the global Maxwellian $M$,
\begin{align}
	M(v) =\frac{1}{\sqrt{2\pi}}e^{-\frac{|v|^2}{2}}. \label{def of M}
\end{align}
$\p(t,x,\bz)$ is  a given parametric potential that reads,
\begin{align}
	\p(t, x,\bz) = \bar{\p}(t, x) + \sum_{j\geq1}z_j \p_j(t, x),
\end{align}
and $E(t, x,\bz) = -\pt_x\p$ is the parametric electric field  that reads,
\begin{align}
	E(t, x,\bz) = \bE(t, x) + \sum_{j\geq1}z_j E_j(t, x) =  \pt_x\bar{\p}(t, x) + \sum_{j\geq1}z_j \pt_x\p_j(t, x).
\label{def of E}
\end{align} 
Here $\bz \in U = [-1,1]^\infty$ is an infinite dimensional parameter.  We
assume assume $\bz$ to be i.i.d random variable following the uniform distribution on $U$, although other distributions can also be used. $\bar{\p}(t,x)$ and $\bar{E}(t,x)$ are the expectations of $\phi, E$ respectively. We furthermore define the corresponding weighted norm $L^2(U,d\rho)$ ,
\begin{align}
	d\rho = \bigotimes_{j\geq1}\frac{dz_j}{2}.
\end{align}

In addition, assume $\p(t,x,\bz)$ and $E(t, x, \bz)$ converge to $\p^\infty(x,\bz)$ and $E^\infty(x,\bz)$ respectively uniformly in time. That is, for $\forall \e >0$, there exists $T$, such that 
\begin{align}
	\sup_{ \bz\in U}\ll \p(t, x,\bz)  - \phi^\infty(x,\bz) \rl_{L^\infty_x} \leq \e, \qd \sup_{ \bz\in U}\ll E(t, x,\bz)  - \Einf(x,\bz) \rl_{L^\infty_x} \leq \e, \qd \text{for } \forall t \geq T.
\end{align}

It is easy to check that 
\begin{align}
	F(x,v,\bz) = e^{-\p^\infty}M(v) \label{def of F}
\end{align}
is the stationary solution of (\ref{VFP}), also called the global equilibrium. Let 
\begin{align}
	h(t,x,v,\bz) = \frac{f(t,x,v,\bz)-F(x,v,\bz)}{\sqrt{M(v)}}
\end{align}
be the {\it{perturbative}} distribution function around F, and furthermore define the perturbative density and perturbative flux as follows, 
\begin{align}
	\s = \int h\sM \, dv, \qd u = \int hv\sM \, dv.
\end{align}
Furthermore, in this paper, we will only focus on the randomness that comes from the electric random field. Therefore, we assume the following condition on the initial data.
\begin{condition}
\label{ass: initial}
Assume there is no initial random perturbation around the steady state F(x,v,z), and the initial perturbative mass is zero. That is, the initial data satisfies the following two equations:
\begin{align}
	&f(0,x,v,z) = F(x,v,z) + h(0,x,v)\sM(v); \label{indep of z}\\
	&\int h(0,x,v)\sM(v) dxdv = 0. \label{initial cond}
\end{align}
\end{condition}

It is easy to check that $h$ satisfies,
\begin{align}
	\e\pt_th + v\pt_xh - \frac{1}{\e} \mL h = -E\(\pt_v - \frac{v}{2}\)h + v\sM \(E - E^\infty\)e^{-\p^\infty},
	\label{micro eq with t}
\end{align}
where $\mL$ is the linearized Fokker-Planck collision operator,
\begin{align}
	\mL h = \frac{1}{\sM} \pt_v\(M \pt_v\(\frac{h}{\sM}\)\), \label{linearized FP}
\end{align}
which satisfies the local coercivity property \cite{duan2010kinetic}, 
\begin{align}
	-\la \mL h, h \ra \geq \lam\ll (1-\Pi)h \rl^2_\o,\label{coercivity}
\end{align}
where $\lam < 1$ is a constant, $\Pi$ is the projection operator onto the null space of $\mL$, 
\begin{align}
	\Pi h = \( \int_{\R} h \,dv\) \sM.
\end{align}
We call equation (\ref{micro eq with t}) the {\it{microscopic equation}}.

\section{Decay rate of the best N approximation}
\label{theoretical}
In this section, we first review the best N approximation, and then
study the convergence rate of this method applied to the Vlasov-Fokker-Planck equation with random electric field.

\subsection{The best N approximation and our result}
\label{sec: best n result}
Since the solution $f(t,x,v,\bz) = F + \sM h(t,x,v,\bz)$, where $F$ and $M$ are given in (\ref{def of F}) and (\ref{def of M}) respectively, as long as one gets the approximate solution for h, then one can easily obtain the approximation solution for $f$. Hence we  seek approximate solution $h_\L$ in a finite dimensional space, 
\begin{align}
	\P_\L = \{h_\L :  h_\L  = \sum_{\nu \in \L} h_\nu(t,x,v)L_\nu(\bz)\},
\end{align}
where $\L$ is an index set with infinite dimensional vectors $\nu$. Here $L_\nu(\bz)$ is the orthonormal Legendre polynomial which forms a basis in $L^2(U,d\rho)$ such that, 
\begin{align}
	L_\nu = \prod_{j\geq1} L_{\nu_j}(z_j), \qd \int_{-1}^1 L_k(z_j)L_l(z_j) \frac{dz_j}{2}  = \d_{kl}. 
\end{align}

If $h$ solves (\ref{micro eq with t}), then naturally one has the projection of the solution $h$ onto $\P_\L$, 
\begin{align}
	P_\L h = \sum_{\nu\in \L} \(\int_U h L_\nu d\rho\) L_\nu := \sum_{\nu\in \L}  h_\nu  L_\nu = \argmin_{h_\L \in \P_\L} \ll h - h_\L \rl_{L^2(U,V,d\rho)}.
\end{align}
The best N approximation is a form of nonlinear approximation that searches for $\nu\in \L$ according to the largest $N$ coefficients $\ll h_\nu \rl_V$. It is proved in \cite{cohen2015approximation} that the decay rate of such approximation  depends on the holomorphy and anisotropy of the solution in the random space, as stated in the following theorem.
\begin{theorem}
 [Corollary 3.11 of \cite{cohen2015approximation}]Consider a parametric equation of the form 
\begin{align}
	\mP(f,a) = 0,
\end{align}
with random field $a = \bar{a}(x) + \sum_{j\geq1}z_j\psi_j(x) \in X$, $\bz\in U$, where $X$ is certain space of $x$. Assume the solution map $a\to f(a)$ admits a holomorphic extension to an open set $\mO\in X$ which contains $a(U) = \{a(\bz): \bz\in U\}$, with uniform bound
\begin{align}
	\sup_{a\in \mO}\ll f(a) \rl_V \leq C.
\end{align}
If in addition $\(\ll \psi_j \rl_X\)_{j\geq1} \in \ell^p(\N) $ for some $p<1$, then for the set of indices $\L_n$ that corresponds to the $n$ largest $f_\nu = \ll \int f L_\nu d\rho \rl_V$, one has,
\begin{align}
	\ll f - \sum_{\nu\in\L_n} f_\nu L_\nu \rl_{L^2(U,V,d\rho)} \leq \frac{C_p}{(n+1)^s}, \qd s= \frac{1}{p} - \frac{1}{2}
\end{align}
where $C_p:=\ll \(\ll f_\nu\rl_V\)_{\nu\in\mF} \rl_{\ell^p}$.
\end{theorem}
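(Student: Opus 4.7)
The plan is to reduce the claim to establishing the $\ell^p$ summability of the sequence of Legendre coefficient norms $(\ll f_\nu \rl_V)_{\nu\in\mF}$, where $f_\nu = \int_U f L_\nu \, d\rho \in V$, and then to invoke Stechkin's lemma in the Hilbert setting $L^2(U,V,d\rho)$. Since $\{L_\nu\}_{\nu\in\mF}$ is an orthonormal basis for $L^2(U,d\rho)$, Parseval's identity combined with Tonelli gives
\begin{equation*}
\ll f - \sum_{\nu \in \L_n} f_\nu L_\nu \rl_{L^2(U,V,d\rho)}^2 \;=\; \sum_{\nu \notin \L_n} \ll f_\nu \rl_V^2,
\end{equation*}
so choosing $\L_n$ to index the $n$ largest $\ll f_\nu \rl_V$ turns an $\ell^p$ bound with $p<1$ into an $\ell^2$ tail bound of order $(n+1)^{-s}$ with $s=1/p-1/2$.

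First I would prove a pointwise Legendre coefficient estimate. For any $\nu\in\mF$ and any radii $\rho=(\rho_j)_{j\geq 1}$ with $\rho_j>1$, consider the Bernstein polyellipse $\mathcal{E}_\rho = \prod_j \mathcal{E}_{\rho_j}\subset \C^{J(\nu)}$, where $\mathcal{E}_{\rho_j}$ is the closed ellipse in $\C$ with foci $\pm 1$ and semi-axis sum $\rho_j$. Using the holomorphic extension of $a\mapsto f(a)$ to $\mO$, the standard Cauchy-type estimate for the Legendre coefficients on each active coordinate yields
\begin{equation*}
\ll f_\nu \rl_V \;\leq\; C\prod_{j\geq 1} \kappa(\nu_j)\, \rho_j^{-\nu_j},
\end{equation*}
where $\kappa(\nu_j)$ is a mild polynomial factor arising from the normalization of the $L_\nu$ and $C = \sup_{a\in\mO}\ll f(a)\rl_V$ is the uniform holomorphy bound.

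The main technical obstacle is to choose $\rho=\rho(\nu)$ adaptively so that the resulting majorant is $\ell^p$-summable over $\mF$. The admissibility constraint is that $a(\bz)=\bar{a}+\sum_j z_j\psi_j \in \mO$ for every $\bz\in\mathcal{E}_\rho$; since $\mO$ is an open neighborhood of the compact image $a(U)$, this holds provided $\sum_j(\rho_j-1)\ll\psi_j\rl_X < \delta$ for some fixed $\delta>0$ measuring the thickness of $\mO$ around $a(U)$. Setting the slack $\rho_j-1$ proportional to $\ll\psi_j\rl_X^{-\alpha}$ for a balanced exponent $\alpha$, one produces a product-form majorant $C\,b^\nu$ with $b=(b_j)\in\ell^p(\N)$ whenever $(\ll\psi_j\rl_X)_{j\geq 1}\in\ell^p(\N)$. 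A combinatorial lemma on the $\ell^p$ summability of product sequences over $\mF$, which is the technical heart of Section~3 of \cite{cohen2015approximation}, then upgrades this pointwise bound to $(\ll f_\nu\rl_V)_{\nu\in\mF}\in\ell^p(\mF)$, with $\ell^p$-norm equal to the constant $C_p$ appearing in the conclusion.

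Finally, Stechkin's lemma asserts that for any $(c_\nu)\in\ell^p(\mF)$ with $p<2$ and decreasing rearrangement $(c_n^\ast)_{n\geq 1}$,
\begin{equation*}
\left(\sum_{n>N}(c_n^\ast)^2\right)^{1/2} \;\leq\; \ll c\rl_{\ell^p}\,(N+1)^{-(1/p-1/2)}.
\end{equation*}
Applying this to $c_\nu=\ll f_\nu\rl_V$ and combining with the Parseval identity from the first step yields the claimed decay rate with constant $C_p$. I expect the hardest step to be the polyellipse admissibility and radii-optimization: it requires both the quantitative control of the analyticity domain provided by the uniform bound $\sup_{a\in\mO}\ll f(a)\rl_V\leq C$, and the careful combinatorial argument that converts pointwise polyellipse estimates into $\ell^p$ summability of the multi-indexed majorant over $\mF$.
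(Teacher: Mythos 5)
The paper itself does not prove this statement: it is imported verbatim as Corollary 3.11 of \cite{cohen2015approximation} and used as a black box, the paper's own contribution (Theorem \ref{pt_nu h of t} and the remark after Theorem \ref{conv rate of t}) being the verification of its hypotheses for the VFP equation. Your outline reconstructs the strategy of the cited source: Parseval in the Bochner space $L^2(U,V,d\rho)$, Bernstein-polyellipse (Cauchy-type) estimates on the Legendre coefficients obtained from the uniformly bounded holomorphic extension, $\ell^p$-summability of $\(\ll f_\nu \rl_V\)_{\nu\in\mF}$, and finally Stechkin's lemma; the Parseval and Stechkin steps are stated correctly, and deferring the combinatorial summability lemma to Section 3 of \cite{cohen2015approximation} is legitimate for a sketch.

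The one step that would fail as written is your concrete radii selection. Choosing a single, $\nu$-independent family with $\rho_j-1\propto \ll \psi_j \rl_X^{-\alpha}$ cannot deliver a product majorant $C\,b^\nu$ with $b\in\ell^p$: admissibility $\sum_{j\geq1}(\rho_j-1)\ll \psi_j \rl_X<\delta$ forces $\sum_{j\geq1}\ll \psi_j \rl_X^{1-\alpha}<\infty$, hence $\alpha\leq 1-p$, while $b_j\sim \ll \psi_j \rl_X^{\alpha}$ lies in $\ell^p$ only if $\alpha\geq 1$; the two requirements are incompatible, and more generally any fixed choice $\rho_j-1=\delta_j/\ll \psi_j \rl_X$ with $(\delta_j)$ summable yields coefficient summability only in some $\ell^q$ with $q$ strictly worse than $p$ (optimizing $\delta_j$ gives $q=p/(1+p)$), so the sharp rate $s=\frac1p-\frac12$ is lost. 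This is precisely why the proof in \cite{cohen2015approximation} optimizes the polyellipse radii separately for each index $\nu$ (inflating $\rho_j$ on the coordinates where $\nu_j$ is large) and then invokes a finer summability lemma than the elementary equivalence ``$(b^\nu)_{\nu\in\mF}\in\ell^p(\mF)$ iff $b\in\ell^p$ and $\ll b\rl_{\ell^\infty}<1$''. You do gesture at the $\nu$-dependent choice before reverting to the fixed-$\alpha$ recipe, so the architecture is right, but that middle step needs the genuine $\nu$-by-$\nu$ optimization; alternatively one can follow the real-variable route of \cite{cohen2010convergence} (mixed derivative bounds of the type $\ll \pt^\nu f \rl_V\lesssim |\nu|!\,d^\nu$ plus Rodrigues' formula), which is in fact the route this paper takes to estimate $C_p$ and to supply the hypotheses via Theorem \ref{pt_nu h of t}.
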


In order to apply the above theorem, one needs to prove  the holomorphy of the solution map. In the case of kinetic equation, since we are only dealing with a real function, the holomorphy of a solution map is equivalent to: There exists constant $B$ and $C_j$, such that
\begin{align}
	\ll \pt_{z_j}^{k} h \rl_V^2 \leq B C_j^{k}k!, \qd\text{for any nonnegative integer }k. \label{NTP}
\end{align}
This result will be proved in Theorem \ref{pt_nu h of t}. In the following context, $\nu$ always represents an infinite dimensional index, $e_j$ is an infinite dimensional vector with only the $j$-th component being 1 and all others  zeros. $\pt^\nu$ represents $\pt^\nu_\bz$ for the convenience of writing. Theorems \ref{pt_nu h of t} and \ref{conv rate of t} are  both based on the following assumptions on $E(t, x,\bz)$ and $\{E_j(t, x)\}_{j\geq 1}$. 
\begin{condition}
\label{cond of E of t}
Assume $E(t,x,\bz) = \bar{E}(t,x) + \sum_{j\geq1}z_jE_j(t,x)$ satisfies the following assumptions,
\begin{align}
&\sup_{t\geq0}\ll E_j(t) \rl_{L^\infty\(U,W^{1,\infty}_x\)} \leq C_j, \qd \(C_j\)_{j\geq1} \in l^{p}(\N), \text{ for some }p\leq 1\label{anisotropic}\\
& \ll E (t)\rl_{L^\infty(U,W^{1,\infty}_x)} \leq \ll\bar{E}(t) \rl_{W^{1,\infty}_x}  +  \sum_{j\geq1}\ll E_j(t) \rl_{W^{1,\infty}_x} \leq C_E, \text{ for all }t,\qd C_E  \leq \frac{\lam C_s}{8}.\label{cond of C_j}
\end{align}
There exists a continuous function $D(t)$, such that 
\begin{align}
\ll \pt^\nu\(\(E - \einf\)e^{-\pinf}\) \rl^2_{L^\infty(U,V)} \leq D(t)(K^\nu\nu!)^2, \text{ for all }t,\qd \int_0^\infty D(s)dt = \bar{D},\label{def of barD}
\end{align}
where $\(K_j\)_{j\geq1} \in l^{p}(\N)$,  for some $p\leq 1$.

\end{condition}

\begin{remark}
If one uses the Karhunen-Lo\`eve expansion to parametrize the random field, then the smoothness properties of the covariance function for the random field determine the $\ell_p-$summability of the random variables. For random field $a(\o, x)\in L^2(\O, dP; L^\infty(D))$ in a polyhedral domain $D\subset \R^d$ with mean field $E_a(x) = \int_\O a(\o, x) dP(\o)$ and covariance $V_a(x, x') = \int_\O \(a(\o,x) - E_a(x)\)\(a(\o, x') - E_a(x')\) dP(\o)$, if the stationary covariance $g_a(z)$ is analytic outside of $z=0$, and $C^k$ at zero, where $V_a(x, x') = g_a\(\lv x - x'\rv\)$, then it is $\ell^{d/k}$-summable.  See \cite{schwab2006karhunen, todor2006robust} for details.
\end{remark}

In the above assumptions, equation (\ref{anisotropic}) guarantees the anisotropy of $E$. (\ref{cond of C_j}) - (\ref{def of barD}) are required for the analyticity of the solution in the random space. Basically, it requires $E$ to be bounded and converges to $\Einf$ fast enough so that the improper integral $D(t)$ exists.

We first state our Theorem about the analyticity of $h$.
\begin{theorem}
\label{pt_nu h of t}
Under Assumptions  \ref{ass: initial} and \ref{cond of E of t} the $\nu$-th derivative of the perturbative solution to (\ref{micro eq with t}) in the random space can be bounded as follows,
\begin{align}
	\ll \pt^\nu h(t) \rl_{L^\infty(U,V)} \leq Q(t)\(|\nu|!\)d^\nu,
	\label{final est_1 of tt}
\end{align}
where $d$ is an infinite dimensional vector with the j-th component 
\begin{align}
	d_j = \max\l\{\sqrt{\frac{20C_j}{\lam C_s}}, K_j\r\}, \label{def d}
\end{align} 
where $C_j, K_j$ are defined in (\ref{anisotropic}), (\ref{def of barD}) respectively; $Q(t)$ is a function exponentially decaying  in $t$, 
\begin{align}
	Q(t) =  \min\l\{\frac{1}{\e} e^{-\frac{\xi}{\e^2}t}, e^{-\xi t}\r\}2\(\ll h(0) \rl_V+ \sqrt{\frac{\lam \bar{D}}{C_E}}\),
	\label{def Q}
\end{align}
for $\xi = \frac{ \lam C_s}{10}$, $C_s, \lam, \bar{D}$ are constants defined in (\ref{sobolev const}), (\ref{coercivity}), (\ref{def of barD}) respectively.
\end{theorem}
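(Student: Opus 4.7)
The plan is to proceed by induction on $|\nu|$, combining a hypocoercive energy estimate for the equation satisfied by $\pt^\nu h$ with the explicit analyticity data encoded in Assumption~\ref{cond of E of t}. Because $E(t,x,\bz)$ is affine in $\bz$, applying $\pt^\nu$ to the microscopic equation \eqref{micro eq with t} closes nicely via the Leibniz rule: the only cross-level terms come from differentiating $E$ exactly once, producing $-\sum_{j:\nu_j\geq 1}\nu_j E_j(\pt_v-v/2)\pt^{\nu-e_j}h$. Thus $\pt^\nu h$ satisfies the same kinetic operator as $h$ (the self-drift $-E(\pt_v-v/2)\pt^\nu h$ and collision operator $\mL$ are unchanged) but is sourced by strictly lower-order $\bz$-derivatives together with $v\sM\,\pt^\nu\bigl((E-\einf)e^{-\pinf}\bigr)$. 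This triangular structure is precisely what the induction requires.

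For the base case $\nu = 0$ I would follow a standard hypocoercivity construction \`a la Dolbeault--Mouhot--Schmeiser/Villani: build a Lyapunov functional $\Psi$ equivalent to $\ll h\rl_V^2$ from a convex combination of $\ll h\rl^2$, $\ll \pt_x h\rl^2$ and a small mixed term $\la \pt_v h,\pt_x h\ra$, so that the Poincar\'e inequality \eqref{sobolev const} and the coercivity \eqref{coercivity} jointly deliver dissipation. The smallness $C_E\leq \lam C_s/8$ ensures that the drift $-E(\pt_v-v/2)h$ is absorbed into the dissipation, leaving a residual rate of order $\lam C_s$, and after the natural $\e$-scaling one arrives at a differential inequality of the form
\[
\tfrac{d}{dt}\Psi + \tfrac{2\xi}{\e^2}\Psi \;\leq\; C\sqrt{D(t)}\,\sqrt{\Psi},
\]
which, together with $\int_0^\infty D(s)\,ds = \bar D$, integrates by Duhamel to the envelope $Q(t)$ in \eqref{def Q}. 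The two-scale form $\min\l\{\tfrac{1}{\e}e^{-\xi t/\e^2},\,e^{-\xi t}\r\}$ reflects the two regimes of hypocoercive decay: a fast collisional rate $\xi/\e^2$ on the non-fluid part, and the slower macroscopic rate $\xi$.

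The inductive step runs the same estimate on the equation for $\pt^\nu h$. Two source contributions must be controlled. First, the $V$-norm of $\sum_{j:\nu_j\geq 1}\nu_j E_j(\pt_v-v/2)\pt^{\nu-e_j}h$ is dominated via \eqref{anisotropic} by $\sum_j \nu_j C_j$ times the $\o$-norm of $(1-\Pi)\pt^{\nu-e_j}h$, which, by the induction hypothesis and the combinatorial identity $\sum_{j:\nu_j\geq 1}\nu_j\,|\nu-e_j|!=|\nu|!$, is majorised by $Q(t)\,|\nu|!\,d^\nu\cdot\max_j(C_j/d_j)$. Second, the $V$-norm of $v\sM\pt^\nu((E-\einf)e^{-\pinf})$ is bounded by $\sqrt{D(t)}\,K^\nu\nu!\leq\sqrt{D(t)}\,|\nu|!\,d^\nu$ since $d_j\geq K_j$. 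Under the constraint $C_j\leq C_E\leq \lam C_s/8$, the choice $d_j=\max\l\{\sqrt{20 C_j/(\lam C_s)},\,K_j\r\}$ makes $C_j/d_j$ uniformly smaller than the residual dissipation rate, so the inductive contribution is absorbed; Duhamel against the same exponential weight then reproduces \eqref{final est_1 of tt} with the same envelope $Q(t)$.

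The main obstacle will be balancing constants so that (a) the self-drift $-E(\pt_v-v/2)\pt^\nu h$ on the left and the lower-order coupling on the right are simultaneously absorbed into the hypocoercive dissipation \emph{uniformly} in $|\nu|$, and (b) the prefactor of $|\nu|!\,d^\nu$ does not deteriorate with $|\nu|$; this is precisely what pins down the constants $C_E\leq \lam C_s/8$, $\xi = \lam C_s/10$, and the factor $20$ in \eqref{def d}. A secondary subtlety is the norm mismatch: the inductive source naturally lives in the $\o$-norm (through the operator $\pt_v-v/2$), while the target bound is in the $V$-norm. Resolving this forces the Lyapunov functional for $\pt^\nu h$ to simultaneously control $\ll(1-\Pi)\pt^\nu h\rl_\o^2$ via \eqref{coercivity}, which is built into the DMS construction and therefore available at every level of the induction.
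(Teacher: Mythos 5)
There are two genuine gaps in your inductive scheme, both of which the paper's actual argument is specifically engineered to avoid. First, your closure of the induction via ``Duhamel against the same exponential weight'' does not work: the lower-order source inherited from the induction hypothesis decays like $Q(s)\sim e^{-\xi s/\e^2}$, i.e.\ at exactly the rate of the semigroup you convolve against, so the Duhamel integral $\int_0^t e^{-\xi(t-s)/\e^2}Q(s)\,ds$ produces a secular factor $t\,e^{-\xi t/\e^2}$ rather than the same envelope $Q(t)$; compounding this over the levels of the induction destroys the uniform bound $Q(t)\,|\nu|!\,d^\nu$ (or forces a loss of rate growing with $|\nu|$). The paper does not induct on pointwise-in-time bounds at all: Lemma \ref{lemma: final est_1 of t} carries the \emph{time-integrated} dissipation $-\frac{\eta}{\e}\int_0^t\ll\pt^\nu h(s)\rl_V^2\,ds$ inside the induction hypothesis, and the troublesome coupling term $\frac{2}{\lam\e}\sum_j\nu_j^2C_j\int_0^t\ll\pt^{\nu-e_j}h\rl_V^2\,ds$ from (\ref{G_nu of t}) is cancelled exactly by adding $\frac{2\nu_j^2C_j}{\lam\eta}$ times the inductive bound for $G_i^{\nu-e_j}$; the combinatorics $\sum_j\nu_j^2\leq|\nu|^2$ then yields $(|\nu|!)^2(2d)^{2\nu}$, and Gronwall is invoked only once at the very end to convert the resulting integral inequality into the exponential decay in (\ref{def Q}). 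This bookkeeping is what makes the constant $d_j=\max\{\sqrt{20C_j/(\lam C_s)},K_j\}$ and the prefactor uniform in $|\nu|$; your scheme as written does not reproduce it.

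Second, your estimate of the cross-level source $\sum_j\nu_jE_j\(\pt_v-\frac{v}{2}\)\pt^{\nu-e_j}h$ places the $\o$-norm on the \emph{lower-order} derivative $(1-\Pi)\pt^{\nu-e_j}h$, but the induction hypothesis only provides a $V$-norm bound on $\pt^{\nu-e_j}h$, and $\ll\cdot\rl_\o$ is not controlled by $\ll\cdot\rl_V$; the coercivity (\ref{coercivity}) at level $\nu-e_j$ only controls the time integral of that $\o$-norm through the dissipation, which a pointwise Duhamel argument cannot use. The paper resolves this by integrating by parts in $v$ so that the operator lands on the top-order term, i.e.\ the pairing becomes $-\la\pt^\nu\pt_x^i(Eh),\(\pt_v+\frac{v}{2}\)(1-\Pi)\pt^\nu\pt_x^ih\ra$ as in (\ref{eq: proc_11})--(\ref{proc_12}), so the $\o$-norm falls on $(1-\Pi)\pt^\nu h$, which the hypocoercive dissipation at level $\nu$ absorbs, while the lower-order factors appear only in the $V$-norm that the induction does control. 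Finally, a more minor divergence: your Lyapunov functional uses the Villani-type mixed term $\la\pt_vh,\pt_xh\ra$, whereas the paper uses the macroscopic mixed term $\e\la\pt^\nu u,\pt_x\pt^\nu\s\ra+\frac12\ll\pt^\nu\s\rl^2$ with the two $\e$-dependent weights $\th_1,\th_2$ in (\ref{def of G}), which is what produces the two decay regimes $\min\{\frac1\e e^{-\xi t/\e^2},e^{-\xi t}\}$ in (\ref{def Q}) and avoids needing $\pt_v$-regularity outside the dissipated part; your variant would require additional commutator estimates that you have not supplied.
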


By taking $\nu = ke_j$, one can derive the inequality (\ref{NTP}), which implies the analyticity of $h$ in the random space. Therefore, based on the above theorem and assumptions, we can conclude that the best N approximation converges independent of dimensionality of the parameter $\bz$ and faster than the Monte Carlo method, as stated in the following theorem:
\begin{theorem}
\label{conv rate of t}
If  $E(t, x,\bz)$ satisfies Condition \ref{cond of E of t}, then the approximate solution obtained by the best N approximation converges to the exact solution with the error,
\begin{align}
	\ll f - f_\L \rl_{L^2(V,U,d\rho)} \leq \frac{C_p}{(n+1)^s}, \qd s= \frac{1}{p} - \frac{1}{2},
	\label{final est_11 of t}
\end{align}
where $p\leq1$ depends on (\ref{cond of C_j}), $C_p = \ll (\ll f_\nu \rl_V )_{\nu\in\mF}\rl_{\ell^p} <\infty$.
\end{theorem}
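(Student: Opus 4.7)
The plan is to derive Theorem~\ref{conv rate of t} as a direct corollary of Theorem~3.1 (Corollary~3.11 of \cite{cohen2015approximation}), using the regularity already established in Theorem~\ref{pt_nu h of t}. First, I would reduce the approximation of $f$ to that of the perturbative variable $h$. Since $f(t,x,v,\bz) = F(x,v,\bz) + \sM\, h(t,x,v,\bz)$, I set $f_\L := F + \sM\, h_\L$, where $h_\L \in \P_\L$ is the best $N$ polynomial approximation of $h$. Because $\sM(v)$ is a fixed, bounded function of $v$ alone, the errors compare by
\begin{align*}
	\ll f - f_\L \rl_{L^2(U,V,d\rho)} \leq C_M \ll h - h_\L \rl_{L^2(U,V,d\rho)},
\end{align*}
with $C_M$ depending only on $\ll \sM \rl_{L^\infty_v}$. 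It therefore suffices to bound the right-hand side.

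Next, I would promote the derivative estimate (\ref{final est_1 of tt}) of Theorem~\ref{pt_nu h of t} to a holomorphic extension of the solution map $E \mapsto h$. Writing the formal Taylor expansion of $h$ around $\bz \in U$,
\begin{align*}
	h(\bz + \zeta) = \sum_{\nu \in \mF} \frac{\pt^\nu h(\bz)}{\nu!}\,\zeta^\nu,
\end{align*}
and applying the multinomial identity $\sum_{|\nu|=n} \tfrac{n!}{\nu!}\, d^\nu \zeta^\nu = \(\sum_{j\geq1} d_j \zeta_j\)^n$ together with the bound $\ll \pt^\nu h(\bz) \rl_V \leq Q(t)\, |\nu|!\, d^\nu$, one checks that this series converges in $V$ uniformly on the polydisc $\{\zeta: \sum_j d_j |\zeta_j| < 1\}$. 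This supplies a complex open neighborhood $\mO$ of $E(U)$ on which $E \mapsto h$ admits a holomorphic extension with a uniform bound (coming from the $|\nu|=0$ case of (\ref{final est_1 of tt})), which is precisely the hypothesis of Theorem~3.1.

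Third, I would verify the $\ell^p$-summability required by Theorem~3.1. By (\ref{def d}), $d_j = \max\{\sqrt{20 C_j/(\lam C_s)}, K_j\}$, so from $(C_j)_{j\geq1} \in \ell^p$ via (\ref{anisotropic}) and $(K_j)_{j\geq1} \in \ell^p$ via (\ref{def of barD}), a routine comparison gives $(d_j)_{j\geq1} \in \ell^p$ with the same $p$. Applying Theorem~3.1 with $a = E$, $\psi_j = E_j$, base space $X = W^{1,\infty}_x$, solution norm $V$, and solution map $E \mapsto h$ then yields
\begin{align*}
	\ll h - h_{\L_n} \rl_{L^2(U,V,d\rho)} \leq \frac{\t C_p}{(n+1)^s}, \qd s = \frac{1}{p} - \frac{1}{2},
\end{align*}
and combining with the reduction in the first step delivers (\ref{final est_11 of t}) after absorbing $C_M$ into $C_p$.

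The main obstacle I anticipate lies in the second step: rigorously identifying the anisotropic polydisc domain produced by the $|\nu|!\, d^\nu$ bound with the open-neighborhood hypothesis of Theorem~3.1, so that the $\ell^p$-summability of $\(\ll E_j \rl_{W^{1,\infty}_x}\)_{j\geq1}$ transfers cleanly to the $\ell^p$-summability of the Legendre coefficients $\(\ll h_\nu \rl_V\)_{\nu \in \mF}$ that underlies the finiteness of $C_p$ in the conclusion. The anisotropic weights in (\ref{def d}) and the uniform bound (\ref{cond of C_j}) together pin down the correct geometry of $\mO$, and once this matching is set up, the remainder of the argument is a direct invocation of the abstract best $N$ result.
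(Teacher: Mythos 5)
Your proposal follows the paper's own route: the paper gives no separate proof of Theorem \ref{conv rate of t}, obtaining it exactly as you do from the derivative bounds of Theorem \ref{pt_nu h of t} (read as holomorphy plus anisotropy of $\bz\mapsto h$ on anisotropic polydiscs) fed into Corollary 3.11 of \cite{cohen2015approximation}, after the same reduction $f = F + \sqrt{M}\,h$; your write-up merely makes explicit the holomorphic-extension and $\ell^p$ matching that the paper asserts. The only caveat, which you share with the paper itself since $d_j\gtrsim \sqrt{C_j}$ in (\ref{def d}), is that the $\ell^p$ class of $(d_j)$ (hence the exponent $s$) need not literally coincide with the $p$ of (\ref{anisotropic}); apart from this inherited imprecision your argument is the intended one.
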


\begin{remark}
How large is $C_p$?
\begin{itemize}
\item [-] For the case when $\ll d/\sqrt{3} \rl_{\ell^1} < 1$:\\
\cite{cohen2010convergence} gives a way to calculate the upper bound for $C_p$ when $\ll d/\sqrt{3} \rl_{\ell^1} < 1$. First by Rodrigre's Formula (See Section 6 of \cite{cohen2010convergence}), 
\begin{equation*}
	\ll h_\nu\rl_V = \ll \int h L_\nu d\rho \rl_V  = \frac{\(\sqrt{3}\,\)^{-\nu}}{\nu!}\ll \pt^\nu h \rl_{L^\infty(U,V)}.
\end{equation*}
Then by the estimate in (\ref{final est_1 of tt}), one has
\begin{equation*}
	\ll h_\nu\rl_V \leq  B(t)\frac{|\nu|!}{\nu!}\(\frac{d}{\sqrt{3}}\)^\nu.
\end{equation*}
According to Theorem 7.2 of \cite{cohen2010convergence}, let $\a = \frac{d}{\sqrt{3}}$ be an infinite dimensional vector, then if $\ll \a \rl_{\ell^1} \leq 1$, and $\a\in \ell^p$, one has
\begin{equation}
	\ll (\ll h_\nu \rl_V )_{\nu\in\mF}\rl_{\ell^p}  \leq \frac{2}{\eta}\exp\(\frac{2(1-p)(J(\eta)+\ll \a \rl_{\ell^p}^p)}{p^2\eta}\)
\end{equation}
where $\eta = \frac{1-\ll \a \rl_{\ell^1}}{2}$, $J(\eta)$ is the smallest positive integer such that $\sum_{j \geq J}|\a_j|^p\leq \frac{\eta}{2}$.

\item [-] For the case $\ll d/\sqrt{3} \rl_{\ell^1} \geq1$:\\
There is no explicit expression for the upper bound of $C_p$ (See Remark 3.22 of \cite{cohen2015approximation}).
\end{itemize}
\end{remark}

\subsection{Proof of Theorem \ref{pt_nu h of t}}
\label{proof of best n}
Define a Lyapunov functional 
\begin{align}
&G^\nu_i= \theta_i\(\frac{\e}{2} \ll \pt^{\nu}h \rl_V^2\) +\frac{1}{2\e} \(\e\la \pt^{\nu}u, \pt_x\pt^{\nu}\s \ra + \frac{1}{2}\ll \pt^{\nu}\s\rl^2\), \qd i = 1,2.
\label{def of G}
\end{align} 
with $\theta_1 = \frac{8}{7\lam},\qd \theta_2 = \frac{8}{7\lam\e^2}$.
Similar Lyapunov functional has been introduced in \cite{hwang2013vlasov} for the deterministic nonlinear Vlasov-Poisson-Fokker-Planck (VPFP) system with $\e=1$. For the case where the uncertainty and scaling parameter $\e$ are involved, \cite{ZhuJin18} gives a modified Lyapunov functional, which is more suitable for different scaling of $\e$. Actually, $G^\nu_i$ is equivalent to $\ll \pt^\nu h\rl_V^2$. Since by Young's inequality, one has
\begin{align}
	 -\frac{\e^2}{2}\ll \pt^{\nu} \pt_xu\rl^2 -\frac{1}{2} \ll \pt^{\nu}\s \rl^2 \leq \e\la\pt^{\nu} \pt_xu, \pt^{\nu}\s \ra
	 \leq & \frac{\e^2}{2}\ll \pt^{\nu} \pt_xu\rl^2 +\frac{1}{2} \ll \pt^{\nu}\s \rl^2,\nonumber
\end{align}
so, 
\begin{align}
	 -\frac{\e}{4}\ll \pt^{\nu} \pt_xu\rl^2  \leq \frac{1}{2\e}\(-\e\la\pt^{\nu} \pt_xu, \pt^{\nu}\s \ra + \frac{1}{2}\ll \pt^{\nu}\s\rl^2\) 
	 \leq &  \frac{\e}{4}\ll \pt^{\nu} \pt_xu\rl^2 +  \frac{1}{2\e}\ll \pt^{\nu}\s\rl^2 .\nonumber
\end{align}
Because that $\ll u \rl^2, \ll \s \rl^2 \leq \ll h \rl^2$, the above inequality becomes
\begin{align}
	-\frac{\e}{4}\ll \pt^{\nu} h\rl_V^2 \leq \frac{1}{2\e}\(-\e\la\pt^{\nu} \pt_xu, \pt^{\nu}\s \ra + \frac{1}{2}\ll \pt^{\nu}\s\rl^2\) 
	\leq \frac{1}{2\e}\ll \pt^\nu h \rl^2_V .
\end{align}
Plug the above inequalities to the definition of $G^\nu_i$, one ends up with
\begin{align}
	&\text{for }\th_1 = \frac{8}{7\lam}, \qd \frac{\e}{2\lam} \ll \pt^{\nu}h \rl_V^2  \leq G_1^\nu \leq \frac{3}{2\lam\e} \ll \pt^{\nu}h \rl_V^2 \\ 
	&\text{for }\th_2 = \frac{8}{7\lam\e^2}, \qd \frac{1}{2\lam\e} \ll \pt^{\nu}h \rl_V^2  \leq G_2^\nu \leq \frac{3}{2\lam\e} \ll \pt^{\nu}h \rl_V^2.
\end{align}

\begin{lemma}
\label{energy est of t}
Under Condition \ref{cond of E of t}, for  $\forall \bz \in U$,  the following estimates hold, 
\begin{align}
	&\text{for }|\nu| = 0:  \pt_t G^0_i  + \frac{\eta}{\e}\ll h\rl_V^2  \leq  \frac{1}{\e C_E}\ll \pt^\nu\(\(E - \einf\)e^{-\pinf}\) \rl^2_V;\label{G of t}\\
	&\text{for }|\nu| > 1: \pt_t G^\nu_i +  \frac{\eta}{\e}\ll h\rl_V^2\leq\frac{2}{\lam \e} \sum_{\nu_j\neq0} \nu_j^2C_j\ll \pt^{\nu-e_j}h \rl_V^2
	+ \frac{1}{\e C_E}\ll \pt^\nu\(\(E - \einf\)e^{-\pinf}\) \rl^2_V,\label{G_nu of t}
\end{align}
where $i = 1,2$, $\eta = \frac{C_s}{10}$, and 
\begin{align}
	&\frac{\e}{2\lam} \ll \pt^{\nu}h \rl_V^2  \leq G_1^\nu \leq \frac{3}{2\lam\e} \ll \pt^{\nu}h \rl_V^2,\qd \frac{1}{2\lam\e} \ll \pt^{\nu}h \rl_V^2  \leq G_2^\nu \leq \frac{3}{2\lam\e} \ll \pt^{\nu}h \rl_V^2 .\label{G_2 of t}
\end{align}
\end{lemma}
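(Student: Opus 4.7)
\medskip

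\noindent\textbf{Proof proposal.} The strategy is to apply $\pt^\nu$ to the microscopic equation (\ref{micro eq with t}), derive equations for the macroscopic moments $\pt^\nu \s$ and $\pt^\nu u$, and then build the standard hypocoercivity energy identity whose Lyapunov functional is exactly $G^\nu_i$. The extra ingredient compared with the deterministic estimate in \cite{hwang2013vlasov} and the scaled estimate in \cite{ZhuJin18} is bookkeeping for the parameter derivatives through Leibniz's rule.

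First I would differentiate (\ref{micro eq with t}) $\nu$ times in $\bz$. Because $E(t,x,\bz)$ is affine in $\bz$, only $\pt^0 E = E$ and $\pt^{e_j}E = E_j$ are nonzero, so Leibniz's rule gives
\begin{align*}
\e\pt_t \pt^\nu h + v\pt_x \pt^\nu h - \tfrac{1}{\e}\mL \pt^\nu h
= -E(\pt_v - \tfrac{v}{2})\pt^\nu h - \sum_{j:\nu_j>0}\nu_j E_j(\pt_v - \tfrac{v}{2})\pt^{\nu-e_j}h + v\sM\,\pt^\nu\!\bigl((E-\einf)e^{-\pinf}\bigr).
\end{align*}
Taking the $\sM$ and $v\sM$ moments in $v$ then produces transport equations for $\pt^\nu \s$ and $\pt^\nu u$ analogous to those used to construct the modified Lyapunov functional in \cite{ZhuJin18}, with source terms that are controlled by $E_j$ and the inhomogeneity.

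Next I would compute $\pt_t G^\nu_i$ in three blocks. Block one: multiply the equation for $\pt^\nu h$ by $\pt^\nu h$ and by $-\pt_x^2\pt^\nu h$ and integrate; the transport term vanishes and $-\la \mL \pt^\nu h, \pt^\nu h\ra$ is bounded below by $\lam\ll (1-\Pi)\pt^\nu h\rl_\o^2$ via (\ref{coercivity}), producing microscopic dissipation in the $V$-norm. Block two: differentiate the cross term $\la \pt^\nu u,\pt_x\pt^\nu \s\ra$ in time using the moment equations; a careful integration by parts together with the Poincaré inequality (\ref{sobolev const}) yields the missing macroscopic dissipation $\gtrsim \ll\pt_x\pt^\nu\s\rl^2\geq C_s\ll\pt^\nu\s\rl_V^2$ (using the zero-mean condition (\ref{initial cond}), which is preserved in time). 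Block three: the $\ll \pt^\nu \s\rl^2$ term produces a harmless $\la \pt^\nu u,\pt_x\pt^\nu\s\ra$ that is partly absorbed into the block-two dissipation by Young's inequality. The constants $\th_1=\frac{8}{7\lam}$ and $\th_2=\frac{8}{7\lam\e^2}$ are precisely chosen so that the microscopic coercivity dominates the cross-term remainders and one ends with a net dissipation $\frac{\eta}{\e}\ll \pt^\nu h\rl_V^2$ with $\eta = \frac{\lam C_s}{10}$. The equivalence (\ref{G_2 of t}) between $G^\nu_i$ and $\ll \pt^\nu h\rl_V^2$ was already verified in the lines preceding the lemma.

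Finally I would close the source terms with Young's inequality. For the $|\nu|=0$ case only the inhomogeneity remains, and a split of the form $\frac{1}{\e}\la v\sM(E-\einf)e^{-\pinf},\pt^\nu h\ra \leq \frac{C_E}{\e}\ll\pt^\nu h\rl_V^2 + \frac{1}{4\e C_E}\ll (E-\einf)e^{-\pinf}\rl_V^2$, combined with $C_E\leq \lam C_s/8$ from (\ref{cond of C_j}), lets the $C_E\ll\pt^\nu h\rl_V^2$ piece be absorbed into the dissipation, yielding (\ref{G of t}). For $|\nu|\ge1$ the same splitting handles the inhomogeneity, while the Leibniz source is estimated pointwise in $\bz$ by
\begin{align*}
\Bigl|\tfrac{1}{\e}\bigl\la \nu_j E_j(\pt_v-\tfrac{v}{2})\pt^{\nu-e_j}h,\,\pt^\nu h\bigr\ra\Bigr|
\leq \tfrac{\lam}{4\e}\ll(1-\Pi)\pt^\nu h\rl_\o^2 + \tfrac{\nu_j^2 C_j^2}{\lam\e}\ll\pt^{\nu-e_j}h\rl^2,
\end{align*}
and an analogous $\pt_x$-weighted bound using $\ll E_j\rl_{W^{1,\infty}_x}\leq C_j$ from (\ref{anisotropic}); summing in $j$ and using $C_j\leq C_E$ to linearize $C_j^2$ into $C_j$ gives the $\frac{2}{\lam\e}\sum\nu_j^2 C_j\ll\pt^{\nu-e_j}h\rl_V^2$ term in (\ref{G_nu of t}).

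The main obstacle is the hypocoercivity bookkeeping in the second step: the cross term $\la \pt^\nu u,\pt_x\pt^\nu\s\ra$ loses regularity (it demands $\pt_x^2\pt^\nu\s$ to be controlled by the microscopic dissipation) and the $\e$ scaling must remain uniform, which is why two different $\th_i$ are needed for the kinetic ($\e=O(1)$) and fluid ($\e\ll 1$) regimes. Once the $\th_i$ are fixed so that all remainders from the cross term fit into the $\frac{\lam}{\e}$-coercivity margin of $\mL$ and the $C_s$-margin of the Poincaré inequality, the rest of the argument is routine Young's inequality.
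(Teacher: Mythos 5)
Your overall strategy is the same as the paper's: the identical Lyapunov functional $G^\nu_i$, pairing $\pt^\nu$ and $\pt^\nu\pt_x$ of (\ref{micro eq with t}) with $\pt^\nu h$ and $\pt^\nu\pt_x h$, time-differentiating the cross term $\la\pt^\nu u,\pt_x\pt^\nu\s\ra$ via the moment equations and closing with Poincar\'e, and Leibniz's rule exploiting the affine dependence of $E$ on $\bz$. Two steps, however, would fail as written.

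First, your Young split for the Leibniz source,
\begin{align*}
\lv\la \nu_j E_j(\pt_v-\tfrac{v}{2})\pt^{\nu-e_j}h,\,\pt^\nu h\ra\rv\leq \tfrac{\lam}{4\e}\ll(1-\Pi)\pt^\nu h\rl_\o^2+\tfrac{\nu_j^2C_j^2}{\lam\e}\ll\pt^{\nu-e_j}h\rl^2,
\end{align*}
does not survive the summation over $j$: the first term gets multiplied by $\#\{j:\nu_j\neq0\}$, which is unbounded over $\nu\in\mF$, so it cannot be absorbed by the fixed coercivity margin $\frac{\lam\theta_i}{\e}$. The split must instead carry the weight $C_j$ on \emph{both} sides, bounding the pairing by $\frac{\thi C_j}{2}\(\e\nu_j^2\ll\pt^{\nu-e_j}h\rl_V^2+\frac{1}{\e}\ll(1-\Pi)\pt^{\nu}h\rl_{V,\o}^2\)$, after which the summability $\sum_{j}C_j\leq C_E\leq\frac{\lam C_s}{8}$ from (\ref{cond of C_j}) makes the summed $(1-\Pi)$ contribution absorbable. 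This is precisely where the $\ell^1$ bound on $(C_j)$ is indispensable, and your version bypasses it.

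Second, you bound the forcing pairings by $\ll(1-\Pi)\pt^\nu h\rl_\o^2$ (and, for the inhomogeneity, by the full $\ll\pt^\nu h\rl_V^2$) without justifying why only the $(1-\Pi)$ component appears. The hypocoercivity (\ref{coercivity}) dissipates only $(1-\Pi)\pt^\nu h$ in the $\o$-norm, and the macroscopic dissipation controls only $\ll\pt^\nu\s\rl_V^2$ with the small constant $C_s/4$; a leftover $\frac{C_E}{\e}\ll\pt^\nu h\rl_\o^2$ or even $\frac{C_E}{\e}\ll\pt^\nu h\rl_V^2$ is not absorbable in general (one only has $C_E\le\lam C_s/8$, not $C_E\le C_s/10$). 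The missing structural facts are $(\pt_v+\frac{v}{2})\pt^\nu\pt_x^i(\Pi h)=0$ and $\int vM\,dv=0$, which reduce every pairing to one against $(1-\Pi)\pt^\nu\pt_x^i h$, together with $\ll(\pt_v+\frac{v}{2})(1-\Pi)g\rl^2\leq\ll(1-\Pi)g\rl_\o^2$. With these two repairs inserted, your three-block computation reproduces the paper's proof.
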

\begin{proof}
See Appendix \ref{proof of energy est of t}.
\end{proof}

\begin{lemma}
\label{lemma: final est_1 of t}
For fixed $\bz$, the following estimates hold,
\begin{align}
	G_i^\nu(t) \leq\(|\nu|!\)^2(2d)^{2\nu} \(G^0_i(0) + \frac{\bar{D}}{\e C_E}\)- \frac{\eta}{\e}\int_0^t \ll \pt^\nu h(s) \rl^2_V ds,
	\label{final est_1 of t}
\end{align}
where $d=(d_1, d_2, \cdots)$ with $d_j = \max\l\{ \sqrt{\frac{2C_j}{\lam\eta}}, K_j\r\}$.
\end{lemma}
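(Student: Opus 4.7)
The plan is to prove this by induction on $|\nu|$, integrating the differential inequalities of Lemma \ref{energy est of t} in time. A key preliminary observation is that, under Assumption \ref{ass: initial}, the initial data carries no randomness, so $\pt^\nu h(0)=0$ and hence $G^\nu_i(0)=0$ for every $|\nu|\geq 1$; only the base case has a nontrivial initial contribution $G^0_i(0)$. Throughout, I will also use the positivity of $G^\nu_i$ supplied by (\ref{G_2 of t}), which will allow me to discard the $G^{\nu-e_j}_i(t)$ terms produced when I invoke the inductive hypothesis.

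For the base case $|\nu|=0$, integrating (\ref{G of t}) from $0$ to $t$ and applying (\ref{def of barD}) with $\nu=0$ (so that $K^\nu\nu!=1$ and the time integral of $D(s)$ is bounded by $\bar{D}$) yields directly
\begin{equation*}
G^0_i(t) + \frac{\eta}{\e}\int_0^t \ll h(s)\rl_V^2\,ds \leq G^0_i(0) + \frac{\bar D}{\e C_E},
\end{equation*}
which is the claimed bound with $(|\nu|!)^2(2d)^{2\nu}=1$.

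For the inductive step with $|\nu|\geq 1$, I integrate (\ref{G_nu of t}), using $G^\nu_i(0)=0$ and (\ref{def of barD}) to handle the forcing term, producing
\begin{equation*}
G^\nu_i(t)+\frac{\eta}{\e}\int_0^t\ll\pt^\nu h(s)\rl_V^2\,ds \leq \frac{2}{\lam\e}\sum_{\nu_j\neq 0}\nu_j^2 C_j\int_0^t\ll\pt^{\nu-e_j}h(s)\rl_V^2\,ds + \frac{(K^\nu\nu!)^2\bar D}{\e C_E}.
\end{equation*}
Applying the inductive hypothesis to each $\pt^{\nu-e_j}h$, dropping the nonnegative $G^{\nu-e_j}_i(t)$, and using the choice $d_j^2\geq 2C_j/(\lam\eta)$ gives the key algebraic identity $\frac{2\nu_j^2 C_j}{\lam\eta}(2d)^{2(\nu-e_j)}\leq \nu_j^2 d_j^2(2d)^{2(\nu-e_j)} = \nu_j^2 (2d)^{2\nu}/4$. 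Summing and using $\sum_j \nu_j^2 \leq |\nu|^2$ together with $|\nu|\cdot(|\nu|-1)!=|\nu|!$ collapses the sum into $\tfrac14(|\nu|!)^2(2d)^{2\nu}(G^0_i(0)+\bar D/(\e C_E))$. The forcing term is controlled via $d_j\geq K_j$ and $\nu!\leq |\nu|!$, which (for $|\nu|\geq 1$) gives $(K^\nu\nu!)^2 \leq (2d)^{2\nu}(|\nu|!)^2/4$, so the forcing contributes at most $\tfrac14(|\nu|!)^2(2d)^{2\nu}\bar D/(\e C_E)$.

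The main obstacle is purely bookkeeping: both contributions must fit inside the target bound $(|\nu|!)^2(2d)^{2\nu}(G^0_i(0)+\bar D/(\e C_E))$ without losing a dimension-dependent factor. The $1/4$ prefactor arising from the definition of $d_j$ is exactly what makes this work, since the induction gives $\tfrac14 G^0_i(0)+\tfrac14\bar D/(\e C_E)$ and the forcing adds another $\tfrac14\bar D/(\e C_E)$, yielding $\tfrac14 G^0_i(0) + \tfrac12 \bar D/(\e C_E) \leq G^0_i(0)+\bar D/(\e C_E)$. The combinatorial inequality $\sum_j \nu_j^2 \leq |\nu|^2$ (valid for nonnegative integers) is what promotes $((|\nu|-1)!)^2$ to $(|\nu|!)^2$ in a single step; getting this factor right is the only place the induction could fail, and it forces the precise choice of $d_j$ in (\ref{def d}).
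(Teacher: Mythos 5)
Your proposal is correct and follows essentially the same argument as the paper: induction on $|\nu|$, integrating the differential inequalities of Lemma \ref{energy est of t}, using $G^\nu_i(0)=0$ for $|\nu|\geq 1$, discarding the nonnegative $G^{\nu-e_j}_i(t)$ terms, and absorbing both the inductive and forcing contributions via the choice of $d_j$ together with $\sum_j\nu_j^2\leq|\nu|^2$. Your bookkeeping with the factor $1/4$ is in fact slightly sharper than the paper's $1/2$, but the mechanism is identical.
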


\begin{proof}

First for $| \nu | = 0$, by (\ref{G of t}) in  Lemma \ref{energy est of t},  one has 
\begin{align}
	&G^0_i(t) \leq G^0_i(0) + \frac{1}{\e C_E}\int_0^tD(s)ds- \frac{\eta}{\e}\int_0^t \ll h(s) \rl^2 ds,
	\label{final est_1}
\end{align}
which satisfies (\ref{final est_1 of t}). Then by induction, assume the following holds, 
\begin{align}
	G_i^{\nu-e_j}(t) \leq\(\(|\nu| - 1\)!\)^2(2d)^{2(\nu-e_j)} \(G^0_i(0) + \frac{\bar{D}}{\e C_E}\) - \frac{\eta}{\e}\int_0^t \ll \pt^{\nu-e_j} h(s) \rl^2_V ds.
	\label{nu-ej of t}
\end{align}
By integrating (\ref{G_nu of t}) over $t$, one has,
\begin{align}
	G_i^\nu(t) \leq G^\nu_i(0) - \frac{\eta}{\e}\int_0^t \ll \pt^{\nu}h(s)\rl_V^2ds
	+ \frac{ 2}{\lam\e}\sum_{\nu_j\neq0} \nu_j^2C_j \int_0^t\ll \pt^{\nu-e_j}h(s) \rl_V^2ds+ \frac{\bar{D}}{\e C_E}(K^\nu\nu!)^2.\label{pc_21 of t}
\end{align}

Since the initial perturbation $h$ is independent of the parameter $\bz$, so $G_i^{\nu}(0) = 0$ for $\forall |\nu|>0$. Multiplying $\frac{2\nu_j^2C_j}{\lam\eta}$ to (\ref{nu-ej of t}), and summing it over $\nu_j \neq 0$, then combining it with (\ref{pc_21 of t}), one gets
\begin{align}
	G_i^\nu(t) + \sum_{\nu_j\neq0}  \frac{2\nu_j^2C_j}{\lam\eta}G_i^{\nu-e_j}(t) 
	\leq &\sum_{\nu_j\neq0}   \frac{2\nu_j^2C_j}{\lam\eta}\(\(|\nu| - 1\)!\)^2(2d)^{2(\nu-e_j)} \(G^0_i(0) + \frac{\bar{D}}{\e C_E}\)  \nonumber\\
	&+ \frac{\bar{D}}{\e C_E}(K^\nu\nu!)^2- \frac{\eta}{\e}\int_0^t \ll \pt^{\nu}h(s)\rl_V^2ds\,.  \label{336}
\end{align}
Since $G^{\nu-e_j}(t)$ is always positive, we can omit the second term on the LHS. In addition, note that $\frac{2C_j}{\lam\eta}(2d)^{2(\nu-e_j)} \leq (2d)^{2\nu}/2$, so
\begin{align}
	&\frac{\bar{D}}{\e C_E}(K^\nu\nu!)^2 + \sum_{\nu_j\neq0}   \frac{2\nu_j^2C_j}{\lam\eta}\(\(|\nu| - 1\)!\)^2(2d)^{2(\nu-e_j)} \(G^0_i(0) + \frac{\bar{D}}{\e C_E}\)\nonumber\\
	\leq &\frac{\bar{D}}{\e C_E}(d^\nu\nu!)^2 + (2d)^{2\nu} \frac{1}{2}\(G^0_i(0) + \frac{\bar{D}}{\e C_E}\) \(\(|\nu| - 1\)!\)^2\sum_{\nu_j\neq0}   \nu_j^2\nonumber\\
	\leq &\frac{\bar{D}}{\e C_E}(d^\nu\nu!)^2 + \((2d)^\nu |\nu|!\)^2 \frac{1}{2} \(G^0_i(0) + \frac{\bar{D}}{\e C_E}\)\nonumber\\
	\leq &((2d)^\nu |\nu|!)^2\( \frac{1}{2}\(G^0_i(0) + \frac{\bar{D}}{\e C_E}\) +\frac{1}{2}\frac{\bar{D}}{\e C_E}\)
	\leq \((2d)^\nu |\nu|!\)^2 \(G^0_i(0) + \frac{\bar{D}}{\e C_E}\)\label{337}
\end{align}
where the second inequality is because of $\sum_{\nu_j\neq0}\nu_j^2 \leq |\nu|^2$, and the third inequality holds for any $|\nu|>0$. Plugging (\ref{337}) into (\ref{336}), and omitting the second term on the LHS give (\ref{final est_1 of t}) complete the induction and consequently the proof for Lemma \ref{lemma: final est_1 of t}.
\end{proof}

From Lemma \ref{lemma: final est_1 of t}, and  the equivalent relationship between $G_1^\nu$ and $\ll \pt^\nu h \rl^2_V$ in (\ref{G_2 of t}),  (\ref{final est_1 of t}), one has
\begin{align}
	\frac{\e}{2\lam }\ll \pt^\nu h \rl^2_V \leq&\(|\nu|!(2d)^\nu\)^2 \( \frac{3}{2\lam\e}\ll h(0) \rl^2_V + \frac{\bar{D}}{\e C_E}\) - \frac{\eta}{\e}\int_0^t \ll \pt^{\nu}h(s)\rl_V^2ds ,\nonumber\\
	\ll \pt^\nu h \rl^2_V \leq& \(|\nu|!b^\nu\)^2 \(\frac{3}{\e^2}\ll h(0) \rl^2_V +\frac{2\lam\bar{D}}{\e^2 C_E}\) - \frac{2\lam\eta}{\e^2}\int_0^t \ll \pt^{\nu}h(s)\rl_V^2ds.
\end{align}
By Grownwall's inequality 
\begin{align}
	&\ll \pt^\nu h(t) \rl_V \leq \frac{2}{\e} |\nu|!b^\nu\(\ll h(0) \rl_V + \sqrt{\frac{\lam \bar{D}}{C_E}}\) e^{-\frac{\xi t}{\e^2}},
\end{align}
for $\xi = \frac{\lam C_s}{10}$. Similarly, for $G_2^\nu$, by (\ref{G_2 of t}) and (\ref{final est_1 of t}), one obtains
\begin{align}
	& \ll \pt^\nu h(t) \rl^2_V \leq (|\nu|!b^\nu)^2\(3 \ll h(0) \rl^2_V + \frac{2\lam \bar{D}}{C_E}\) - 2\lam\eta\int^t_0 \ll \pt^\nu h(s)\rl_V^2ds.\nonumber
\end{align}
Grownwall's inequality then implies,
\begin{align}
	&\ll \pt^\nu h(t) \rl_V \leq 2|\nu|!b^\nu \(\ll h(0) \rl_V+ \sqrt{\frac{\lam \bar{D}}{C_E}}\)  e^{-\xi t},
\end{align}
which gives the conclusion in Theorem \ref{pt_nu h of t}.

\section{The Numerical Method}
\label{numerical}
The convergence rate obtained in Theorem \ref{conv rate of t} is based on the best N approximation, which means one needs to calculate all coefficients of the Legendre series $h_\nu$ in order to find the N largest $\ll h_\nu\rl_V$. In practice, one needs a more efficient numerical method to find the best basis. Based on the greedy search method introduced in \cite{chkifa2014high}, in line of \cite{nobile2008sparse,nobile2008anisotropic},  we formulate a new {\it{residual based adaptive sparse polynomial interpolation}} (RASPI) method. We will first introduce the framework of the {\it{adaptive sparse polynomial interpolation}} (ASPI) method  in Section \ref{ASPI}. Then in Section \ref{RASPI}
the new residual based method will be introduced, and the reason why this method is computationally efficient,  particularly for time dependent kinetic equation when $\e$ is small, is also explained in Section \ref{RASPI}. Finally a comparison of the  computational cost between the  ASPI and the RASPI methods for general kinetic equations is given in Section \ref{compare}.

In this section, we assume $\ll E_j(t, x) \rl_V$ decreases as $j$ increases for all $t\geq0$.

\subsection{The adaptive sparse polynomial interpolation (ASPI)}
\label{ASPI}

The ASPI is a numerical method that approximates the solution map by a sparse polynomial interpolation at well chosen points. Let us first define the representation of infinite dimensional random variable and polynomial interpolation bases. For a sequence $\G = (\b_k)_{k\geq0}$ of distinct points in $[-1,1]$, and index $\nu = (\nu_j)_{j\geq 1}$, define points 
\begin{align}
	\bz_\nu = (\b_{\nu_j})_{j\geq1}
\end{align} 
and hierarchical Lagrange basis 
\begin{align}
	H_\nu(\bz) = \prod_{j\geq1}l_{\nu_j}(z_j), \qd l_0 = 1,\qd l_k(\b) = \prod_{m=0}^{k-1}\frac{\b - \b_m}{\b_k - \b_m}. \label{hierarchical basis}
\end{align}
Note that 
\begin{align}
	&H_\nu(\bz_\nu) = 1, \qd \text{for all }\nu\in\mF, &H_\nu(\bz_{\t{\nu}}) = 0, \qd \text{for all }\t{\nu}<\nu. \label{H_nu}
\end{align}
Here $\t{\nu} \leq \nu$ if and only if all components of $\t{\nu}$ are smaller than or equal to $\nu$; $\t{\nu}<\nu$ represents that $\nu\leq \nu$ and $\t{\nu} \neq \nu$.
 We call the index set $\{\L_k\}_{k\geq1}$ {\it{monotone}} if $\L_k \subset \L_{k+1}$ for all $k$. We further call index set $\L\subset \mF$ {\it{downward closed}}, 
\begin{align}
	\text{if }\nu\in\L, \qd\t{\nu}\leq \nu,\qd \text{then } \t{\nu} \in\L.
\end{align}

Secondly, when is the infinite dimensional polynomial interpolation well defined? Actually for a downward closed set $\L \subset \mF$, given the grid $\bz_\L$ and the corresponding solution $f_\L$ on the grids,
\begin{align}
	\bz_\L := \{ \bz_\nu, \nu\in\L\}, \qd f_\L : = \(f_\nu\)_{\nu\in\L} := \(f(t,x,v,\bz_\nu)\)_{\nu\in\L},
\end{align}
there exists a unique polynomial 
\begin{align}
	I_\L (t,x,v,\bz)= \sum_{\nu\in\L} \a_\nu(t,x,v)H_\nu(\bz), 
\end{align}
such that $I_\L$ has the same value as $f_\L$ at $\bz_\L$. Namely, $I_\L$ is the polynomial interpolation of $f_\L$ at interpolating points $\bz_\L$. From the above framework, the multi-dimensional polynomial interpolation is uniquely determined by the sequence $\G$ and index set $\L$. There are three questions to be answered at this point. 
\begin{itemize}
\item How to choose the sequence $\G = (\b_k)_{k\geq0}$;
\item How to calculate $I_\L$ if given $\bz_\L$ and $f_\L$;
\item How to find the $\L_n$ with $\#(\L_n) = n$, such that $I_{\L_n}$ is the closest to $f(t,x,v,\bz)$,
\end{itemize}
where $\#(\L_n)$ represents number of elements  in $\L_n$.

Choosing different sequences will result in different stability and accuracy of the interpolation mapping, which is characterized by the Lebesgue constant. The Leja sequence is usually considered  a good choice, which starts with an arbitrary $\b_0\in [-1,1]$, and then defined by,
\begin{align}
	\b_k:=\text{argmax}\l\{\prod_{l=0}^{k-1}\lv \b-\b_l\rv: \b\in[-1,1] \r\}.\label{leja_const}
\end{align}  
\cite{chkifa2014high} proved that if the Lebesgue constant of  a univariate polynomial interpolation on sequence $\{\b_l\}_{l=0}^{k}$ is $O\((k+1)^\theta\)$, then the Lebesgue constant $\lam_\L$ of polynomial interpolation on $\bz_\L$ is $O(\#(\L)^{\theta+1})$ for any monotone set $\L$. \cite{chkifa2013lebesgue} proved the Lebesgue constant on the Leja sequences is less than $3(k+1)^2\log(k+1)$, which implies the Lebesgue constant of the multidimensional polynomial interpolation on $\bz_\L$ is less than $O\(\#(\L)^4\)$.

After determining the sequence $\G$, given arbitrary $\L_n = \{\nu_1,\cdots, \nu_n\}$, and the corresponding $\bz_{\L_n}, f_{\L_n}$, since the interpolation polynomial satisfies
\begin{align}
	\begin{bmatrix}  
		H_{\nu_1}(\bz_{\nu_1})\cdots H_{\nu_n}(\bz_{\nu_1})\\
		\vdots\\
		H_{\nu_1}(\bz_{\nu_n})\cdots H_{\nu_n}(\bz_{\nu_n})\\
	\end{bmatrix}
	\begin{bmatrix}  
		\a_{\nu_1}\\
		\vdots\\
		\a_{\nu_n}\\
	\end{bmatrix}
	=\begin{bmatrix}  
		f_{\nu_1}\\
		\vdots\\
		f_{\nu_n}\\
	\end{bmatrix},
\end{align}
one can invert the first matrix to get the coefficient $\(\a_\nu\)_{\nu\in\L_n}$. In general, one needs to do the inversion all over again if the index $\L_n$ changes.  

However, if $\{\L_n\}_{n=1}^N$ is monotone and downward closed, there is a progressive construction of the interpolation operator, which allows to avoid
inverting a matrix. If $\L_n = \L_{n-1} \cup \{\nu_n\}$, then 
\begin{align}
	I_{\L_n} = I_{\L_{n-1}} + \a_{\nu_n} H_{\nu_n},\qd \a_{\nu_n} = f_{\nu_n} - I_{\L_{n-1}}\(\bz_{\nu_n}\), \qd \text{with }I_{\L_0} = 0.\label{poly inter}
\end{align}
Actually, one can prove this by induction. For $n = 1$, $I_{\L_1} = f_{\nu_1}$ is indeed the interpolation on $\bz_{\nu_1}$. Assume $I_{\L_{n-1}}$ constructed in the above way is the interpolation on $\bz_{\L_{n-1}}$, then since $\nu_1 <  \cdots <\nu_{n-1} < \nu_n$, so by (\ref{H_nu}), $H_{\nu_n}(\bz_{\nu_k}) = 0$, for $k\leq n-1$; $H_{\nu_n}(\bz_{\nu_n}) = 1$. Therefore 
\begin{align}
	&k\leq n-1: \qd I_{\L_n}(\bz_{\nu_k}) = I_{\L_{n-1}}(\bz_{\nu_k}) = f_{\nu_k},\\
	&k= n: \qd I_{\L_n}(\bz_{\nu_n}) = I_{\L_{n-1}}(\bz_{\nu_n}) + (f_{\nu_n} -I_{\L_{n-1}}(\bz_{\nu_n}))= f_{\nu_n},
\end{align}
which implies that $I_{\L_n}$ is the interpolation operator on $\bz_{\L_n}$.

In order to use this progressive construction to find the interpolation operator, we require the index set $\{\L_n\}_{n=1}^N$ to be monotone and downward closed, that is, 
\begin{align}
	\L_{n+1} = \L_n \cup \{\nu_{n+1}\}, \qd \nu_n \in N(\L_n), \qd N(\L_n) = \{\nu\notin \L_n, \L_n\cup\{\nu\} \text{ is downward closed}\}\nonumber
\end{align}
where we call $N(\L_n)$ the neighborhood of index set $\L_n$.

Now we come to the last question. Assume we already determined $\L_n$, in order to find the best $\L_{n+1}$,  how should one select the optimal $\nu_{k+1}$ from the neighborhood of $\L_n$? First we notice for infinite dimensional $\bz$, $\#\{N(\L_k)\}$ is also infinite. Even for finite dimension $\bz\in \R^d$, $\#\{N(\L_k)\} \sim O(k^d)$, which is too big to search numerically. So we introduce {\it{anchored neighbors}} $\t{N}(\L)$, 
\begin{align}
	\t{N}(\L) = \{\nu\in N(\L):  \nu_j =0 \text{ if } j < j(\L)+1\}, \qd j(\L) = \max\{j: \nu_j > 0\text{ for some }\nu\in \L\}
\end{align}
The reason why searching the anchored neighbor makes sense is because we assume at the beginning of this section $\ll E_j(t,x)\rl_{W^{1,\infty}_x}$ decreases as $j$ increases, then from Theorem \ref{pt_nu h of t}, one notices the upper bound of $\ll \pt_{z_j}h(t,\bz) \rl_V$ decreases as $j$ increases, which formally indicates that $z_j$ becomes less sensitive when $j$ increases. So if for all $\nu \in \L_n$, the components larger than and equal to $\(j(\L_n)+1\)$ of $\bz_\nu$ are the same,  then when searching for the next interpolation point, one should first consider adding a point along $\(j(\L_k)+1\)$-st component before all the other components larger than $\(j(\L_k)+1\)$. 

Note that because of the monotonicity of $\L_n$, one can actually construct $\t{N}(\L_n)$ based on $\t{N}(\L_{n-1})$ in the following way. Define
\begin{align}	
 	&\h{N}(\L_n) = \{e_{j(\L_n)+1}, \nu_n+e_j, j \leq j(\nu_n)\}, \qd j(\nu) = \max\{j: \nu_j > 0\}\label{N_Ln}\\
	&N^*(\L_n) =  \h{N}(\L_n) \backslash \(\t{N}(\L_{n-1}) \cap \h{N}(\L_n)\), \label{def N*}
\end{align}
then 
\begin{align}
	&\t{N}(\L_n) = \t{N}(\L_{n-1}) \cup N^*(\L_n). \label{def of NLn}
\end{align}

Here is an example that shows the anchored neighbors in three dimension.
\begin{figure}[htbp]
\subfloat[]{\includegraphics[width=0.33\textwidth]{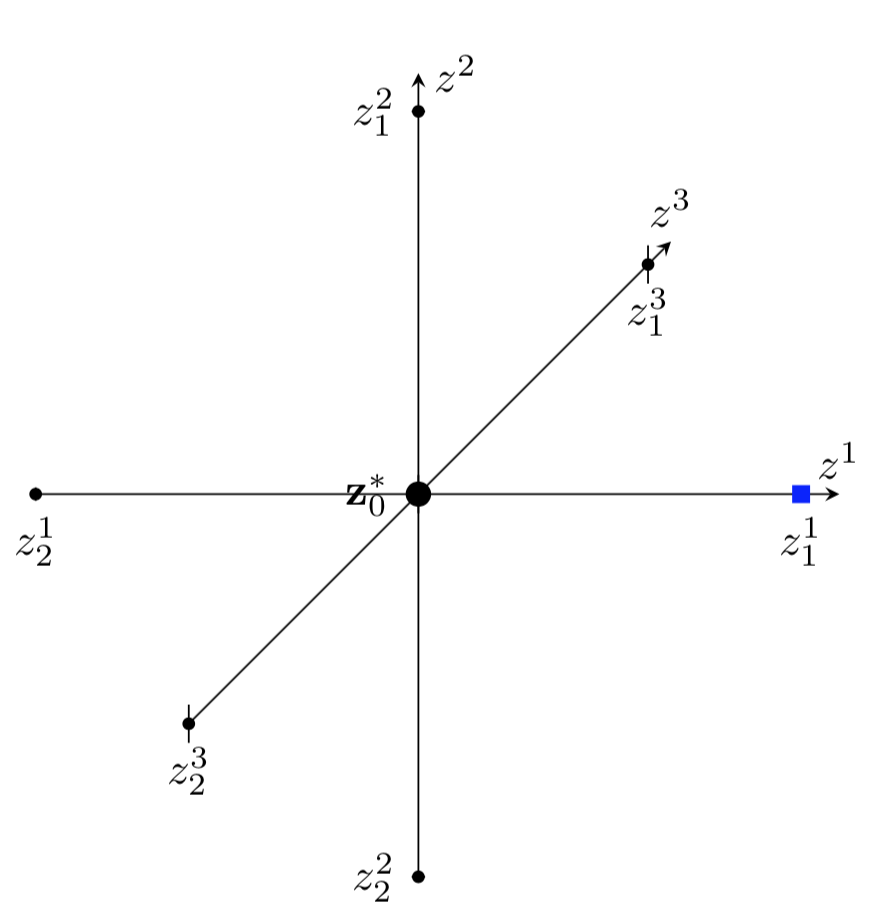}}
\subfloat[]{\includegraphics[width=0.33\textwidth]{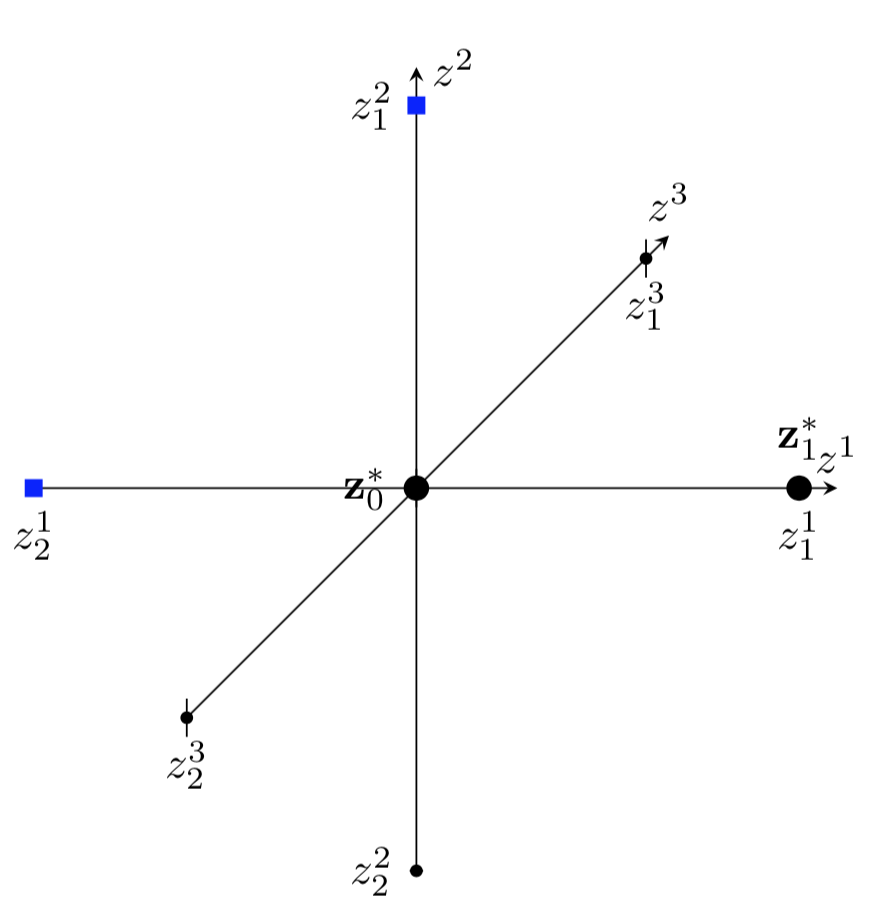}}
\subfloat[]{\includegraphics[width=0.33\textwidth]{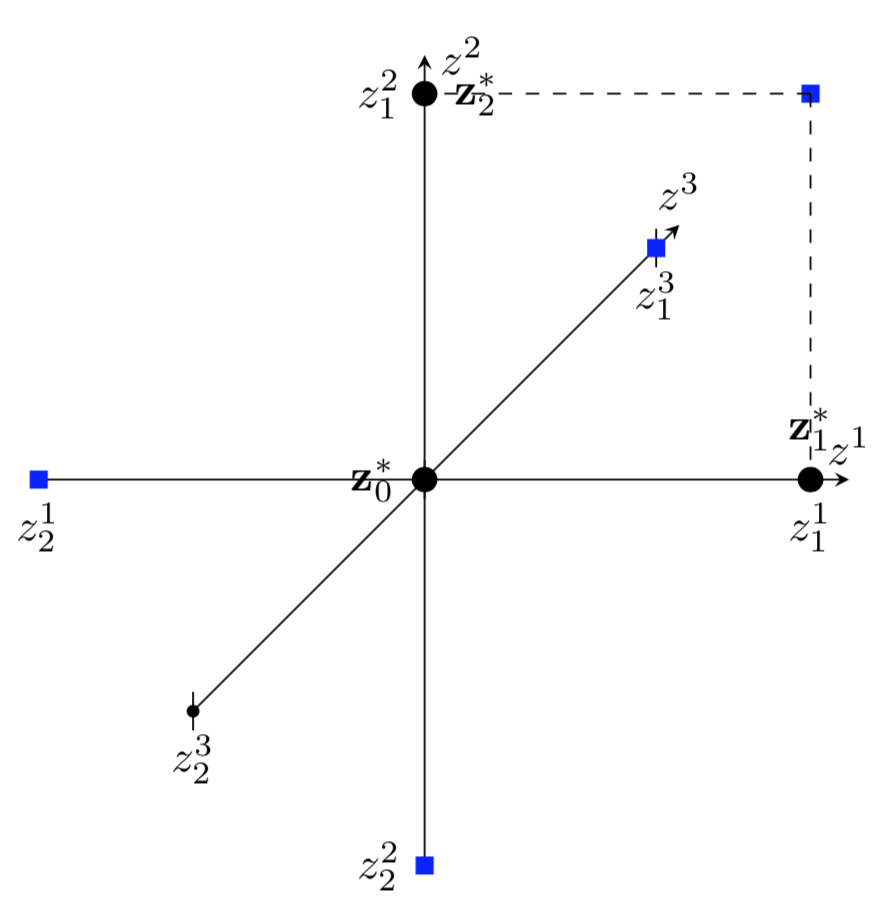}}
\caption{The blue dots represent the anchored neighbors of $\L_0 = \{\bz_0^*\}, \L_1 =\{\bz_0^*, \bz_1^*\}, \L_2 = \{\bz_0^*, \bz_1^*, \bz_2^*\}$ for $\bz= (z^1,z^2,z^3)\in[-1,1]^3$. }
\label{fig: 3D}
\end{figure}
Since we assume the direction $z^1$ is more important than $z^2, z^3$, so in Figure \ref{fig: 3D}(a), we explore more points in the direction $z^1$ first, so $\t{N}(\{(0,0,0)\}) = \{(1,0,0)\}$. Then in Figure \ref{fig: 3D}(b), since we already have 2 points on the $z^1$-axis, instead of exploring more points on
the $z^1$ direction, we start to explore the $z^2$ direction, so $\t{N}(\{(0,0,0), (1,0,0)\}) = \{(2,0,0), (0,1,0)\}$. Assume after doing greedy search on $\t{N}(\L_1)$, one gets $\nu_2 = (0,1,0)$. Then in Figure \ref{fig: 3D}(c), Since one has two points on $z^1, z^2$ respectively, so one starts to explore more points on the third direction $z^3$ at this step. so $\t{N}(\{(0,0,0), (1,0,0),(0,1,0)\}) = \{(2,0,0),(0,2,0),(1,1,0),(0,0,1)\}$

Note that the size of $N^*(\L_n)$ depends on $j(\nu_n)$, and $j(\nu_n)\leq n$, so 
\begin{align}
	\#(\t{N}(\L_n)) \leq \#(\t{N}(\L_{n-1})) + n,
\end{align}
which gives the size of $\t{N}(\L_n)$ is at most 
\begin{align}
	\#(\t{N}(\L_n)) \leq \frac{1}{2}n(n-1)\sim O(n^2).
\end{align}

After one constructed the anchored neighbors of $\L_n$, one searches for the $\nu \in \t{N}(\L_n)$ that maximizes the interpolation error at the new grid point. In summary, we have the following algorithm.
\begin{algorithm}
\label{algo old}
(ASPI) \cite{chkifa2014high}
\begin{itemize}
\item [--] Step $0$. Construct the Leja sequence $\G = \{\b_j\}_{j\geq0}$ starting from $0$ as in (\ref{leja_const}) and the basis $\{l_k(\b)\}_{k\geq1}$ as in $(\ref{hierarchical basis})$.
\item [--] Step $1$. Define $\L_1 = \{0\}$ as the null multi-index and the corresponding polynomial interpolation $I_{\L_1}(\bz) = f(\bz_{\nu_1})$.
\item [--] Step $n$.\   Assume we already have $\t{N}(\L_{n-1})$, $\L_n$  and  $I_{\L_n}$.
\begin{itemize}
\item [-] Construct $\h{N}(\L_n)$ by (\ref{N_Ln}), then $N^*(\L_n)$ can be constructed through (\ref{def N*}), $\t{N}(\L_n)$ through (\ref{def of NLn}).
\item [-] $\displaystyle \nu_{n+1} = \argmax_{\nu\in\t{N}(\L_n)} \ll I_{\L_{n+1}} - I_{\L_n} \rl_{L^2(U,V,d\rho)} = \argmax_{\nu\in\t{N}(\L_n)}  \ll \a_{\nu}\rl_V \ll H_\nu\rl_{L^2(U,d\rho)} $,\\
where $\displaystyle \a_\nu(t,x,v)$, $H_\nu(\bz)$ are defined in (\ref{poly inter}), (\ref{hierarchical basis}) respectively.
\end{itemize}
\end{itemize}
\end{algorithm}

	After step $0$, the polynomial interpolation will be uniquely depending on the index set $\L$. In step $n$,  it gives the way to find the best $\L_n$ such that $I_{\L_n}(t, x, v, \bz)$ is  closest to $f(t, x,v,\bz)$ in the $L^2(U,V,d\rho)$ space. In the greedy search step, the reason why $\ll I_{\L_{n+1}} - I_{\L_n} \rl_{L^2(U,V,d\rho)} =  \ll \a_{\nu}\rl_V \ll H_\nu\rl_{L^2(U,d\rho)} $ is because of the progressive construction of the polynomial interpolation, which is interpreted in (\ref{poly inter}). $\argmax_{\nu\in \t{N}(\L_n)}$ is obtained by directly searching for the maximal value. 
%	Therefore, by the definition of $\t{N}(\L_n)$ in (\ref{N_Ln}),  one only needs to calculate $\a_{\nu_{k+1}}$ for $\nu_{k+1}\in\t{N}(\L_n)\backslash\t{N}(\L_{n-1})$ in each step.
	
	Furthermore, note that when calculating $\a_\nu$, one actually needs to calculate the function value $f(T,x,v,\bz_\nu)$ at point $\bz_\nu$. Although the final approximate solution $f(T,x,v,\bz)$ is a polynomial interpolating on  $N$ points $f(\bz_{\nu_1}), \cdots, f(\bz_{\nu_N})$, in order to get the best $\bz_{\nu_{n+1}}$ ($1\leq n \leq N$),  one needs to do the greedy search on $\t{N}(\L_{n})$, which includes calculating $f(\bz_\nu)$ for all $\nu\in\t{N}(\L_n)$. Since most PDEs have no analytic solution,  the computational cost of obtaining the solution at sample point $\bz_\nu$ highly depends on the numerical algorithm. We will see in the next section, the ASPI method is computationally inefficient for time dependent kinetic equation with small $\e$.

\subsection{The residual based adaptive sparse polynomial interpolation (RASPI)}
\label{RASPI}
As stated at the end of the previous section, we will explain in more details in this section why the ASPI is not as efficient as the RASPI for general linear kinetic equation.  The general form of a kinetic equation without uncertainty reads,
\begin{align}
	\pt_t f +v\pt_x f = \frac{1}{\e}Q(f),
\end{align}
where $f(t,x,v)$ is the probability density distribution of particles, $Q(f)$ describes the collision between particles. The parameter $\e$ represents the dimensionless mean free path or the Knudsen number,  which connects the microscopic kinetic model to the macroscopic hydrodynamic model when $\e\to0$. Kinetic equations give a uniform description of both mesoscopic and macroscopic physical quantities for all range of $\e$. A numerical scheme that preserves the asymptotic transitions from kinetic equations to their macroscopic limits in the numerically discrete space is called {\it{Asymptotic Preserving}} (AP) scheme \cite{Jin99, jin2010asymptotic}. For numerical stability independent of $\e$, the numerical scheme that is AP usually is implicit for the discretization of $Q(f)$. Let $f^m \in \R^M$ be the discretized vector for $f(m\d_t,x,v) $, where $\d_t$ is the time step, then the general form of the scheme for a linear $Q(f) = Bf$ with $B$ independent of $f$ is, 
\begin{align}
	\frac{f^{m+1} - f^m}{\d_t} + Af^{k} - \frac{1}{\e}B^{m+1}f^{m+1} = 0,
\end{align}
where $A, B^{m+1} \in \R^{M\times M}$ are constant matrices. 

For the kinetic equation with uncertainty in the collision operator, 
\begin{align}
	\pt_t f +v\pt_x f = \frac{1}{\e}Q(f,\bz),
\end{align}
for $\forall \bz\in U$, the general form of the scheme is 
\begin{align}
	\frac{f^{m+1}(\bz) - f^m(\bz)}{\d_t} + Af^m(\bz) - \frac{1}{\e}B^{m+1}(\bz)f^{m+1}(\bz) = 0.
	\label{implicit scheme}
\end{align}
For example, the VFP equation (\ref{VFP}) we considered in this paper, if one moves the forcing term $E\pt_vf$ to the RHS of the equation as is typically done in the high field regime \cite{jin2011asymptotic}, then the collision operator becomes, 
\begin{equation*}
    Q(f,z) = \pt_v\((v+E)f + \pt_vf\).
\end{equation*}
That's why in the most general case, the numerical operator $B$ depends on both $\bz$ and $t$. 
Equivalently, (\ref{implicit scheme}) can also be written as,
\begin{align}
	f^{m+1}(\bz) = \(\frac{1}{\d_t} - \frac{1}{\e}B^{m+1}(\bz)\)^{-1}\(\frac{f^m(\bz)}{\d_t} - Af^{k}(\bz)\).\label{inversion case}
\end{align}
This means that in order to calculate $f(T, x, v, \bz_\nu)$ at $T = N_t\d_t$, one needs to invert an $\R^{M\times M}$ matrix for $N_t$ times, where $M = N_x \times N_v$ with $N_x, N_v$ being the number of grid points in $x$ and $v$ respectively. So for each $\bz_\nu$, the cost is $O(N_tM^3)$. Algorithm \ref{algo old} requires calculating $f(T, x, v, \bz_\nu)$ for all $\bz_\nu \in \t{N}(\L_n)$, where the size of $\t{N}(\L_n)$ could be $O(n^2/2)$. So the ASPI method (Algorithm \ref{algo old}) for multi-scale kinetic equations could be computationally expensive, see Section \ref{compare} for the total cost.  

Next we will introduce an algorithm where calculating $f(T, x, v, \bz_\nu)$ for all $\bz_\nu \in \t{N}(\L_n)$ can be avoided.

At step $n$, we already have $\L_n = \{\nu_1, \cdots \nu_n\}$ and the numerical solution $f^m_{\nu_k} = f(m\d_t,\bz_{\nu_k})$ and $f^{m-1}_{\nu_k} = f((m-1)\d_t,\bz_{\nu_k})$ for $1\leq k \leq n$. Let operator $\mS$ be the numerical kinetic operator, 
\begin{align}
	\mS(f^m(\bz), f^{m-1}(\bz)) = \frac{f^m(\bz) - f^{m-1}(\bz)}{\d_t} + A^{m-1}f^{m-1}(\bz) - \frac{1}{\e}B^m(\bz)f^m(\bz).
\end{align}
For $f^m_{\nu_k}, f^{m-1}_{\nu_k}$ obtained from the numerical scheme, it must satisfy $\mS(f^m_{\nu_k}, f^{m-1}_{\nu_k}) = 0$. For the interpolation approximations $I^m_{\L_n}(\bz), I^{m-1}_{\L_n}(\bz)$ interpolating on $f^m_{\nu_k}, f^{m-1}_{\nu_k}$ respectively, $1\leq k\leq n$,  $\mS(I^m_{\L_n}(\bz), I^{m-1}_{\L_n}(\bz))$ represents the residual of the scheme for the polynomial interpolation $I^m_{\L_n}$ at $\bz$. So one can search for the biggest residual with respect to $\mS(I^m_{\L_n}(\bz), I^{m-1}_{\L_n}(\bz))$ on $\nu \in \t{N}(\L_n)$ to get $\nu_{n+1}$. We will later see that the greedy search in this way costs less than the ASPI method. Since the interpolation $I^m_{\L_n}(\bz)$ on data $f^m_{\nu_k}$  for $1\leq k \leq n$ can be represented by a linear combination of $f^m_{\nu_k}$, which can be written as
\begin{align}
\label{b_k}
	I^m_{\L_n}(\bz) = \sum_{k = 1}^n\g_{\L_n}^k(\bz)f^m_{\nu_k},
\end{align}
where 
\begin{align}
	\begin{bmatrix}  
		\g^1_{\L_n}(\bz)
		\cdots
		\g^n_{\L_n}(\bz)
	\end{bmatrix} = \begin{bmatrix}  
		H_{\nu_1}(\bz)
		\cdots
		H_{\nu_n}(\bz)
	\end{bmatrix}
	\begin{bmatrix}  
		H_{\nu_1}(\bz_{\nu_1})\cdots H_{\nu_n}(\bz_{\nu_1})\\
		\vdots\\
		H_{\nu_1}(\bz_{\nu_n})\cdots H_{\nu_n}(\bz_{\nu_n})\\
	\end{bmatrix}^{-1},  \label{g_k}
\end{align}
hence, plugging (\ref{b_k}) into operator $\mS$ gives
\begin{align}
	\mS(I^m_{\L_n}(\bz), I^{m-1}_{\L_n}(\bz)) =&\sum_{k=1}^n \g^k_{\L_n}\(\frac{f^m_{\nu_k} - f^{m-1}_{\nu_k}}{\d_t} + A^{m-1}f^{m-1}_{\nu_k}\) -\frac{1}{\e}B^m(\bz) \sum_{j=1}^n \g^k_{\L_n}f^m_{\nu_k}\nonumber\\
	=&\sum_{k=1}^n\g^k_{\L_n}\(\frac{1}{\e}B^m_{\nu_k} f^m_{\nu_k}\) -\frac{1}{\e}B^m(\bz) \sum_{k=1}^n \g^k_{\L_n}f^{m}(\bz_{\nu_k})\nonumber\\
	=&\frac{1}{\e} \sum_{k=1}^n \g^k_{\L_n}\(B^m_{\nu_k}f^m_{\nu_k} - B^m(\bz)f^m_{\nu_k}\) , 
\end{align}
where the second equality is because of $S(f^m_{\nu_k}, f^{m-1}_{\nu_k}) = 0$. 

In addition, when calculating $\g_{\L_n}(\bz) = [\g_1(\bz), \cdots, \g_n(\bz)]$ in (\ref{g_k}), we don't need to invert the whole matrix on the RHS at every step. Because of the monotonicity of $\L_n = \L_{n-1} \cup \{\nu_n\}$, and the Schur complement of the inversion from the previous step,  we can avoid computing the inversion. Specifically, define
\begin{align}
	H_{\L_n}(\bz_{\L_n}) : = 
	\begin{bmatrix}  
		H_{\nu_1}(\bz_{\nu_1})\cdots H_{\nu_n}(\bz_{\nu_1})\\
		\vdots\\
		H_{\nu_1}(\bz_{\nu_n})\cdots H_{\nu_n}(\bz_{\nu_n})\\
	\end{bmatrix}, \qd H_{\L_{n-1}}(\bz_\nu) = [H_{\nu_1}(\bz_\nu),\cdots, H_{\nu_{n-1}}(\bz_\nu)],
\end{align}
then by (\ref{H_nu}), $H_{\L_n}(\bz_{\L_n})$ can also be written in the form of block matrix,
\begin{align}
	H_{\L_n}(\bz_{\L_n}) = 
	\begin{bmatrix}  
		&H_{\L_{n-1}}(\bz_{\L_{n-1}}) & {\bf{0}}_{ (n-1)\times1}\\
		&H_{\L_{n-1}}(\bz_{\nu_n})  &1
	\end{bmatrix}.  
\end{align}
It is easy to check that, 
\begin{align}
	H_{\L_n}^{-1}(\bz_{\L_n})= 
	\begin{bmatrix}  
		&H_{\L_{n-1}}^{-1}(\bz_{\L_{n-1}})   &0\\
		&-\g_{\L_{n-1}}(\bz_{\nu_n})  &1
	\end{bmatrix}.  \label{H_inv}
\end{align}
Let 
\begin{align}
	S^{\L_n}_\nu = \sum_{k=1}^n \g_{\L_n}^k(\bz_\nu)\(B_{\nu_k}f_{\nu_k} - B_\nu f_{\nu_k}\) \in\R^M \label{residual calc}
\end{align} be the residual of interpolation of $\bz_\nu$ at time $T$, where $f_{\nu_k} = f(T, \bz_{\nu_k})$, $B_\nu = B(T, \bz_\nu)$, so based on this residual, we construct the following new algorithm.

\begin{algorithm}
\label{algo new}
(RASPI)
\begin{itemize}
\item [--] Step $0$, Step $1$ are the same as Algorithm \ref{algo old}.
\item [--] Step $n$.\   Assume we have $\L_n$, $\t{N}(\L_{n-1})$ and  $I_{\L_n}(\bz)$, 
\begin{itemize}
\item [-] Construct $\h{N}(\L_n)$ by (\ref{N_Ln}), then $N^*(\L_n)$ can be constructed through (\ref{def N*}), $\t{N}(\L_n)$ through (\ref{def of NLn}).
\item [-] $\nu_{n+1} = \argmax_{\nu\in\t{N}(\L_n)} \ll S^{\L_n}_\nu \rl_V$, where $S^{\L_n}_\nu$ is defined in (\ref{residual calc}).
\end{itemize}
\end{itemize}
\end{algorithm}

Compared with Algorithm \ref{algo old}, the above algorithm is more efficient since for each $\bz\in \t{N}(\L_n)$, one only needs to  multiply an $\R^{M\times M}$ matrix to a $M-$dimensional vector once. The computational cost for each $\bz_\nu$ is $O(M^2)$, which is much less than $O(N_tM^3)$.  We will compare the total computational cost of the two algorithms in details in the next section.

\subsection{Computational cost}
\label{compare}
In this section, we will compare the computational costs between the ASPI (Algorithm \ref{algo old}) and the RASPI (Algorithm \ref{algo new}). In order to get an approximate solution with an error less than $\d$, for a first order discretization in the phase space, one needs to use  $N_x = O(\d^{-1})$, $N_v = O(\d^{-1})$. So  the explicit expression of the approximate solution $\hf(T,\bz) = I_{\L_N}(\bz)$ at time $T$  should be an $M$-dimensional vector where $M = N_x\cdot N_v= O(\d^{-2})$. In the random space, according to Theorem \ref{conv rate of t}, the best $N$ approximation gives the error $\ll f - \hat{f}\rl_{L^2(V,U,d\rho)} \leq N^{-s}$, thus one requires $N = O\(\d^{-1/s}\)$ to get an $O(\d)$ error.

For the ASPI method, at the $n$-th step of Algorithm \ref{algo old}, one needs to do the following calculation:
\begin{enumerate}
\item [\circled{0}] From the previous steps, one has,
\begin{itemize}
\item [-]  $\a_{\nu_k}\in\R^M$,  for all $\nu_k\in\L_n$;  
\item [-]  $\a_\nu\in\R^M$,  for all $\nu\in\t{N}(\L_{n-1})$.
\item [-]  $c_\nu= \ll \a_\nu \rl \ll H_\nu \rl_{L^2(V,d\rho)}$,  for all $\nu\in\t{N}(\L_{n-1})$.
\end{itemize}
\item [\circled{1}] Obtain $f(T,\bz_\nu)\in\R^M$ by numerical scheme (\ref{implicit scheme}), for all $\nu\in N^*(\L_n)$. 
\item [\circled{2}] Obtain $\a_\nu = f(T,\bz_\nu) - I_{\L_n}(\bz_\nu)$ for all $\nu \in N^*(\L_n)$.
\begin{itemize}
\item [-] To get the value of $I_{\L_n}(\bz_\nu)$, one needs to do the summation $I_{\L_n}(\bz_\nu) = \sum_{k = 1}^n \a_{\nu_k}H_{\nu_k}(\bz_\nu)$.
\end{itemize}
\item  [\circled{3}]  Obtain $c_\nu = \ll \a_\nu \rl \ll H_\nu \rl_{L^2(V,d\rho)}$ for all $\nu\in N^*(\L_n)$ and find $\nu_{n+1} = \argmin_{\nu\in \t{N}(\L_{n})} c_\nu$.
\end{enumerate}

In step  \circled{1}, one needs to calculate the numerical solution to the PDE at time $T$ for all $\bz_\nu\in N^*(\L_n)$,  where the size of $N^*(\L_n)$ is  $O(n)$. For general implicit scheme as (\ref{implicit scheme}), the computational cost to obtain $f(T,\bz)\in\R^M$ is $O((M^3+M^2)N_t)$, where $M^3$ comes from the inversion of matrix, $M^2$ comes from the multiplication of matrices, and these have to be done in each step.   Therefore, the computational cost in step \circled{1} is
\begin{align}
	O(n\times(M^3+M^2)N_t ).
\end{align}
There are also cases where the inversion can be completed within a cost of $O(M^2)$, or the inversion only needs to be done once if $B$ is time independent, then the computational cost of these are calculated in Remark \ref{rmk: other cases}.

In step \circled{2}, for each $\nu\in N^*(\L_n)$, the computational cost to get $I_{\L_n}(\bz_\nu)$ is $O\( nM\r.$\\ $\l.+\sum_{k=1}^n\text{Cost}\{ H_{\nu_k}(\bz_\nu)\} \)$, where the cost of $H_{\nu_k}(\bz_\nu)$ is $O(k)$ for each $\nu$. Hence one requires
\begin{align}
	O\(n \times \( nM + n^2\)\)
\end{align}
of computational operations to complete step \circled{2}. 

At last, calculating $\ll \a_\nu \rl \ll H_\nu\rl$ for each $\nu\in N^*(\L_n)$ requires $O(M + n\sqrt{M})$ operations. Then searching for the smallest one requires $O\(\#\(\t{N}(\L_n)\)\)$ operations. Hence  the total  cost is
\begin{align}
	O\( n \times \(M+n\sqrt{M}\) + n^2\).
\end{align}

To sum up, the total cost at the $n$-th step of the ASPI method is 
\begin{align}
	O\(nM^3N_t + n^2M + n^3\) .
\end{align}
Plug in $M = O(\d^{-2})$, and assume time $T\sim O(1)$ and $\d_t \sim \d_x$, so $N_t \sim O(\d^{-1})$, hence the total computational cost at the n-th step of Algorithm \ref{algo old} is 
\begin{align}
	O\(\d^{-7}n+ \d^{-2}n^2  + n^3 \). \label{cost of algo old}
\end{align}

While for the RASPI method, one needs to do the following calculation at the $n$-th step,
\begin{enumerate}
\item [\circled{0}] From the previous steps, one has,
\begin{itemize}
\item [-] $H^{-1}_{\L_{n-1}}\(\bz_{\L_{n-1}}\)$;
\item [-]  ${\bm{\g}}_{\L_{n-1}}(\bz_\nu)$, for $\nu\in\t{N}(\L_{n-1})$ and $\nu = \nu_n$;
\item [-]  $f_{\nu_k},B_{\nu_k}f_{\nu_k}$, $B_\nu f_{\nu_k}$ for all $\nu_k\in\L_n$, $\nu\in \t{N}(\L_{n-1})$ .
\end{itemize}
\item [\circled{1}] Obtain $\bg_n(\bz_\nu) $, for all $\nu\in \t{N}(\L_n)$:
\begin{itemize}
\item [-] Get $H^{-1}_{\L_n}$ by (\ref{H_inv}).
\item [-] For $\nu\in \t{N}(\L_{n-1})$, $\bg_{\L_n}(\bz_\nu) =[ \bg_{\L_{n-1}}(\bz_\nu) - H_{\nu_n}(\bz_\nu)\bg_{\L_{n-1}}(\bz_{\nu_n}), H_{\nu_n}(\bz_\nu)]$.
\item [-] For $\nu\in N^*(\L_n)$, $\bg_n(\bz_\nu) = [H_{\L_{n-1}}(\bz_\nu) H^{-1}_{\L_{n-1}}- H_{\nu_n}(\bz_\nu)\bg_{\L_{n-1}}(\bz_{\nu_n}), H_{\nu_n}(\bz_\nu)]$.
\end{itemize}
\item [\circled{2}] Obtain $S^{\L_n}_\nu$ by (\ref{residual calc}).
\item  [\circled{3}]  Find $\nu_{n+1} = \argmin_{\nu\in \t{N}(\L_{n})} \ll S^{\L_n}_\nu\rl^2$.
\end{enumerate}

Firstly in step \circled{1}, since one already has $H^{-1}_{\L_{n-1}}\(\bz_{\L_{n-1}}\)$ and ${\bm{\g}}_{\L_{n-1}}(\bz_\nu)$ from the previous step, one only needs to plug them in to get $H^{-1}_{\L_n}$.  For each $\nu\in\t{N}({\L_{n-1}})$, one needs $O(n)$ operations to get $\bg_n(\bz_\nu)$. While for each $\nu\in N^*(\L_n)$, one needs $O((n-1)^2+ \sum_{k=1}^n\text{Cost}\{ H_{\nu_k}(\bz_\nu)\})$ operations to get $\bg_n(\bz_\nu)$.  The total computational cost is
\begin{align}
	O\((n-1)^2\times n\)+ O\(n \times \((n-1)^2+n^2\)\).
\end{align}

In step \circled{2},  for each $\nu\in\t{N}(\L_{n-1})$, since one already has $B_{\nu_k}f_{\nu_k} - B_\nu f_{\nu_k}$ from the previous step, so one only needs to do the weighted sum operations given $\bg_{\L_n}(\bz_\nu)$ , which requires $O(nM)$ computational cost.  For each $\nu\in N^*(\L_n)$, one needs to calculate $B_\nu f_{\nu_k}$ first then does the summation, whose computational cost is $O(M^2+nM)$. Therefore the total computational cost in \circled{2} is
\begin{align}
	O((n-1)^2 \times nM)+ O(n\times(M^2+nM)).
\end{align}

At last, Obtaining $\ll S(T,\bz_\nu) \rl^2$ and finding the minimum among all $\nu\in \t{N}(\L_n)$ requires computational cost of order 
\begin{align}
	O(n^2 \times M  + n^2).
\end{align}

To sum up, the total cost at $n$-th step of the RASPI is 
\begin{align}
	O\(n^3M + nM^2\).
\end{align}
Plugging in $M = O(\d^{-2})$ gives the total cost of Algorithm \ref{algo new} at the $n$-th step
\begin{align}
	O(n\d^{-4} + n^3\d^{-2}). 
	\label{cost of algo new}
\end{align}

Summing (\ref{cost of algo old}) and (\ref{cost of algo new}) over $1\leq n\leq O\(\e^{-1/s}\)$, and based on the fact that $1/s \leq 2$, one has
\begin{align}
	&\text{Computational cost of ASPI}: O(\d^{-7-2/s}),\\
	&\text{Computational cost of RASPI}: 
	\begin{cases}
	O(\d^{-2-4/s}), \qd \frac{1}{2}\leq s\leq 1\\
	O(\d^{-4-2/s}), \qd s \geq1
	 \end{cases}
\end{align}
The ratio of the two costs is
\begin{align}
	\frac{\text{Computational cost of ASPI}}{\text{Computational cost of RASPI}}=
	\begin{cases}
	O(\d^{-5+2/s}), \qd \frac{1}{2}\leq s\leq 1,\\
	O(\d^{-3}), \qd \qd \qd s \geq1.
	 \end{cases}
	 \label{eq: compare}
\end{align}
From (\ref{eq: compare}), one can see that the computational cost of the ASPI is $O(\d^{-5+2/s})$ times that of RASPI for $s\leq 1$ and $O(\d^{-3})$ times that of the  RASPI for $s\geq1$. Since $s\geq \frac{1}{2}$, therefore the RASPI is always more efficient. In addition, the faster $\ll \p_j(x) \rl_{W^{1,\infty}_x}$ decays, the more computational cost the RASPI saves. 

\begin{remark}
\label{rmk: other cases}
\begin{enumerate}
\item The inversion of a matrix in (\ref{inversion case}) does not necessarily need the cost of $O(M^3)$. For example, when the matrix is positive definite, one can invert  an $\R^{M\times M}$ matrix by the conjugate gradient method with computational cost of $O(M^2)$.  Also, when the collision operator $Q(f)$ is time independent, then the matrix $B^{m+1}$ in the numerical method (\ref{inversion case}) is the same constant matrix for all $ m$, so one only needs to invert the matrix once. In both cases, the computational cost of calculating $f(T,x,v,\bz_\nu)$ for a specific $\bz_\nu\in U$ is $O(M^2N_t)$, which reduces the total computational cost of the ASPI to $O(\d^{-5-2/s})$. Then the ratio of the two costs becomes
\begin{align}
	\frac{\text{Computational cost of ASPI}}{\text{Computational cost of RASPI}}=
	\begin{cases}
	O(\d^{3-2/s}), \qd \frac{1}{2}\leq s< \frac{2}{3},\\
	O(\d^{-(3-2/s)}), \qd \frac{2}{3}\leq s\leq 1,\\
	O(\d^{-1}), \qd \qd \qd s \geq1.
	 \end{cases}
\end{align}
When $\frac{1}{2}\leq s<\frac{2}{3}$, the ASPI method is more efficient than the RASPI, while $s\geq\frac{2}{3}$, RASPI is still better than ASPI. 

\item Another deterministic method called Quasi Monte Carlo (QMC) is also widely used in parametric PDEs. However, in general, since the convergence rate of QMC is $O(\frac{log(N)^d}{N})$ \cite{morokoff1995quasi, caflisch1998monte}, which depends on dimensionality of the parameter, so it is not comparable in high dimension. Nevertheless, as discussed  in \cite{kuo2015multi, kuo2012quasi} for parametric elliptic equation using modified QMC method, it enjoys the same convergence rate as the best N approximation when the randomness $\ll \psi_j \rl \in \ell^p$ for $ 2/3 \leq p \leq 1$. Its performance for kinetic equations remain to be investigated.

\item  In general whether the ASPI method can achieve the error estimates we get in Section \ref{theoretical} is still an open question. However, under stronger assumptions, \cite{ZDS18_759} showed that a certain type of adaptive 
sparse grid interpolation will produce sequences of active index sets in polynomial basis function space 
which will give a dimension independent convergence rate.
\end{enumerate}
\end{remark}

\section{Numerical examples}
\label{numerical eq}
In this section, we conduct some numerical experiments for the linear Vlasov-Fokker-Planck equation with random electric field $E(t,x,\bz)$ , 
\begin{align}
\e\pt_t f + v\pt_xf - E\pt_v f = \frac{1}{\e}\mF f, \qd x,v \in \O = (0,2\pi) \times \R \label{eqn_eq1}
\end{align}
with periodic condition on $x\in [0,2\pi]$, and initial data 
\begin{align}
	f(0,x,v) = \frac{\sin(x)}{\sqrt{2\pi}}e^{-\frac{v^2}{2}} + F,\label{ID_eq1}
\end{align}
where $F = e^{-\pinf}M$ is defined in (\ref{def of F}). We consider $\bz\in[-1,1]^{100}$, and set the electric field as,
\begin{align}
	E(t,x,\bz) = \frac{\sin(x)}{2} + \sum_{j=1}^{100}E_j(t,x)z_j \label{form}
\end{align}
with different choices of $E_j(t, x)$ in the experiments. We solve (\ref{eqn_eq1})  by finite difference method with unified meshes in space and velocity $\d_x = \frac{2\pi}{Nx}$ on $[0,2\pi]$ and $\d_v = \frac{12}{N_v}$ on $[-6,6]$. The scheme we use here is from \cite{jin2011class, jin2011asymptotic}
\begin{align}
	\e\frac{f^{m+1}_{i,j} - f^{m}_{i,j}}{\d_t} + \frac{f^{m+1}_{i+\frac{1}{2},j} - f^{m+1}_{i-\frac{1}{2},j}}{\d_x} = \frac{1}{\e}P_{\bz}\(\frac{f^{m+1}_{i,j}}{\sqrt{M_{i,j}}}\),
	\label{scheme}
\end{align}
where $f^m_{i,j} = f(m\d_t,i\d_x,j\d_v-6)$. The transport term $v\pt_xf$
is approximated by the upwind scheme
$$f^m_{i+\frac{1}{2},j} = \frac{|v_j|+v_j}{2}f^m_{i,j} - \frac{|v_j|-v_j}{2}f^m_{i+1,j}.$$ For the other terms,  since 
$$\frac{1}{\e} \(\e E\pt_vf+ \mF(f)\) = \frac{1}{\e} \pt_v\(M_l\pt_v\(\frac{f}{M_l}\)\)$$ 
with 
$$M_l(\bz) = \frac{1}{\sqrt{2\pi}}e^{-\frac{|v-\e E|^2}{2}},$$
depending on $\bz$, so one can define an operator $P_{\bz}(f)$ as the discretization of $\pt_v\(M_l\pt_v\(\frac{f}{M_l}\)\)$ as following, 
\begin{align}
	P_\bz(f_j) =&  \frac{1}{\d_v^2}\(\sqrt{(M_l)_{j+1}(M_l)_j}\(\frac{f_{j+1}}{(M_l)_{j+1}} - \frac{f_{j}}{(M_l)_{j}}\) - \sqrt{(M_l)_{j-1}(M_l)_j}\(\frac{f_{j}}{(M_l)_{j}} - \frac{f_{j-1}}{(M_l)_{j-1}}\) \).\label{def of P_z}
\end{align}
Since the scheme is in implicit form, for the operator above, one needs to do $N_xN_t$ times inversion of a $\R^{N_v\times N_v}$ matrix for each numerical solution $f(T,x,v,\bz)$ at a sample point $\bz$. One efficient way to reduce the computational cost is to set \cite{jin2011class}
\begin{align}
	g^m_{i,j} = \frac{f^m_{i,j}}{\sqrt{(M_l)^m_{i,j}}},
\end{align}	
 then 
\begin{align}
	\t{P}_\bz(g_j) =&  \frac{\sqrt{(M_l)_{j}}}{\d_v^2\sqrt{(M_l)_j}}\(\sqrt{(M_l)_{j+1}(M_l)_j}\(\frac{g_{j+1}}{\sqrt{(M_l)_{j+1}}} - \frac{g_j}{\sqrt{(M_l)_j}}\) \r.\nonumber\\
	&\l.- \sqrt{(M_l)_{j-1}(M_l)_j}\(\frac{g_j}{\sqrt{(M_l)_j}} - \frac{g_{j-1}}{\sqrt{(M_l)_{j-1}}}\) \)\nonumber\\
	=&  \frac{\sqrt{(M_l)_{j}}}{\d_v^2}\(g_{j+1} -\( \frac{\sqrt{(M_l)_{j+1}}}{\sqrt{(M_l)_j}} +  \frac{\sqrt{(M_l)_{j-1}}}{\sqrt{(M_l)_j}}\)g_j + g_{j-1} \). 
\end{align}
In this way, scheme  (\ref{scheme}) becomes, 
\begin{align}
	\e\frac{g^{m+1}_{i,j} - f^{m}_{i,j}/\sqrt{(M_l)^{m+1}_{i,j}}}{\d_t} + \frac{g^{m+1}_{i+\frac{1}{2},j} - g^{m+1}_{i-\frac{1}{2},j}}{\d_x} = \frac{1}{\e}\t{P}_{\bz}\(g^{m+1}_{i,j}\), 
	\label{new scheme}
\end{align}
therefore we can get a symmetric positive definite matrix multiplied to $g_i^{m+1}$, which can be inverted with less computational cost, for example, by the conjugate gradient method. 

For all numerical experiments, we set $N_x = 32, N_v = 64, \d_t = \frac{\d_x}{8}, T = 0.1, \e = 1$. 

\subsection{Convergence rate}
We test three different time independent random electric fields in the form of (\ref{form}), where $E_j(x)$ is given by the following functions:
\begin{align}
	&a) \ E_j(x) =\frac{\cos(jx)}{2^j};
	\qd b) \  E_j(x) = \frac{\cos(jx)}{j^2};
	\qd c) \ E_j(x) =\frac{\cos(jx)}{j}.\label{eq_E}
\end{align}    

Let $f(T,\bz)$ represent the numerical solution obtained from scheme (\ref{scheme}),  $\hf(T,\bz)$ represents the approximate solution obtained by sparse polynomial interpolation  on sample points $\G_{\L_n}$. Specifically, for the ASPI algorithm, we get $\a_{\nu_n}$, $1\leq n \leq N$, then $\hf(\bz_i) = \sum_{n = 1}^N\a_{\nu_n}H_{\nu_n}(\bz_i)$. Similarly, for the RASPI algorithm, we get $H^{-1}_{\L_N}(\bz_{\L_N})$ and $f_{\L_N}$, then $$[\a_{\nu_1}, \cdots, \a_{\nu_N}] = H^{-1}_{\L_N}(\bz_{\L_N}) f_{\L_N},$$ hence one can get $\hf(T,\bz_i) = \sum_{n = 1}^N\a_{\nu_n}H_{\nu_n}(\bz_i)$.

For the convergence rate, we check the mean square error defined as following, 
\begin{align}
\label{def of error}
	\text{Error} = \sqrt{\frac{1}{N}\sum_{i = 1}^{N} \ll \hf(T,\bz_i) - f(T,\bz_i) \rl^2_{L^2_{x,v}}}
\end{align}
with $N = 10^5$, where $\bz_i$ is uniformly drawn from $[-1,1]^{100}$, to test the accuracy of the sparse interpolation.

%\begin{remark}
%	Assume we already have $P_{\bz_{\nu_k}}f_{\nu_k}^{N+1}$, $P_{\bz_\nu}f_{\nu_k}^{N+1}$, for $\nu_k \in \L_k$ and $\bz_\nu\in N(\L_{n-1})$, then  when we are  calculating $\mL( I_{\L_n}(\bz_\nu))$, what we need to compute is only, 
%	\begin{align}
%	\b_k(\bz_\nu) \text{ by (\ref{b_k})},\qd P_{\bz_\nu}f_{\nu_k}^{N+1} \text{ for }\nu\in N^*(\L_n),
%	\end{align}
%where $N^*(\L_n)$ is defined in (\ref{def N*}). In addition, when calculating $\b_k$, we don't need to actually do the inversion of the whole matrix. By Schur complement of the inversion from the previous step,  we can avoid computing the whole inversion.
%\end{remark}

The left column of Figure \ref{fig_1} shows how the error decays when adding sample points adaptively by the ASPI method  and the RASPI method. From the numerical results one can see that both methods enjoy almost the same convergence rate.  The convergence rates are different for three different electric fields.  By comparing the decay rate of error for each example, one finds that if $\ll E_j(x)\rl_V$ decays faster, then the approximation also converges with a faster rate. We further show the algebraic decay rate $s$ in the slope.  For  $ E_j(x)$ given in (\ref{eq_E}) that decays in the order of $O\(2^{-j}\)$, $O\(j^{-2}\)$, $O\(j^{-1}\)$ respectively, the decay rate of the error in terms of the number of basis or number of sampling points is about $O\(N^{-2,7}\)$, $O\(N^{-1.2}\)$, $O\(N^{-0.7}\)$ respectively for $N$ basis or sample points, which are all faster than the Monte Carlo method of $O\(N^{-0.5}\)$.
\begin{figure}[htbp]
\subfloat[The convergence rate for the case of $\displaystyle E(x,\bz) = \frac{\sin(x)}{2} + \sum_{j=1}^{100}\frac{\cos(jx)}{2^j}z_j$]{\includegraphics[width=0.5\textwidth]{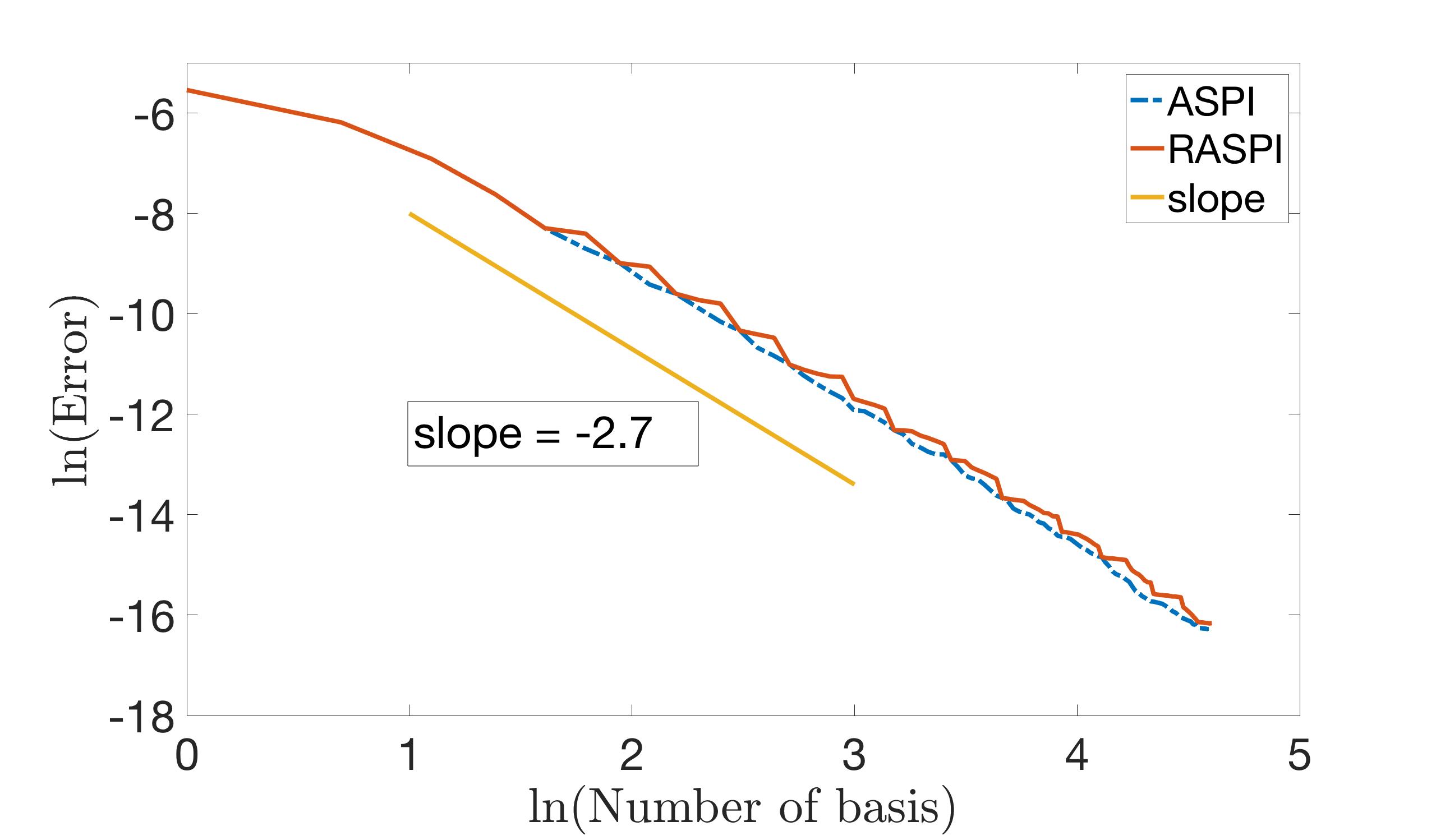}\qd\includegraphics[width=0.5\textwidth]{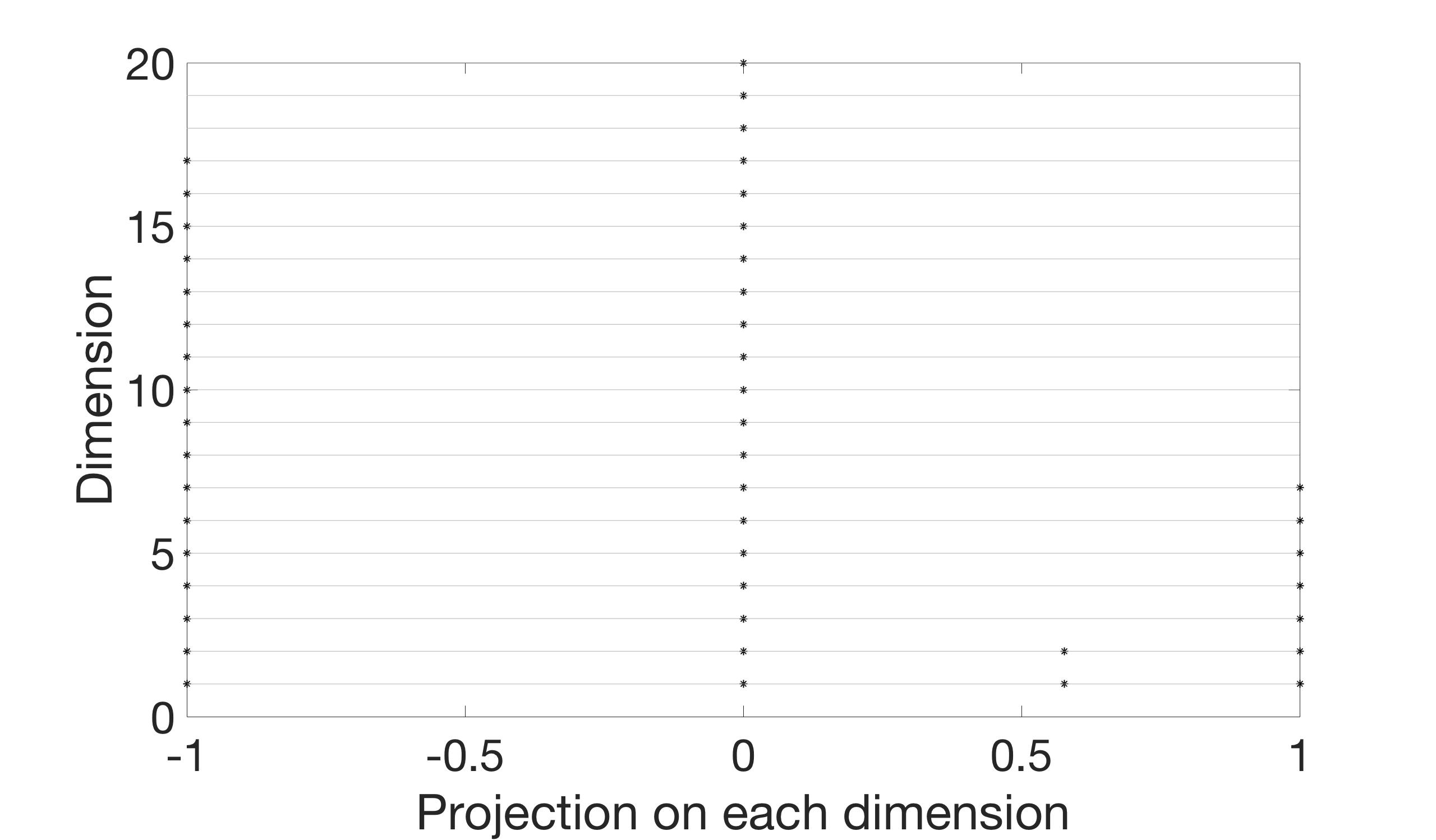}}\\
\subfloat[The convergence rate for the case of $\displaystyle E(x,\bz) = \frac{\sin(x)}{2} + \sum_{j=1}^{100}\frac{\cos(jx)}{j^2}z_j$]{\includegraphics[width=0.5\textwidth]{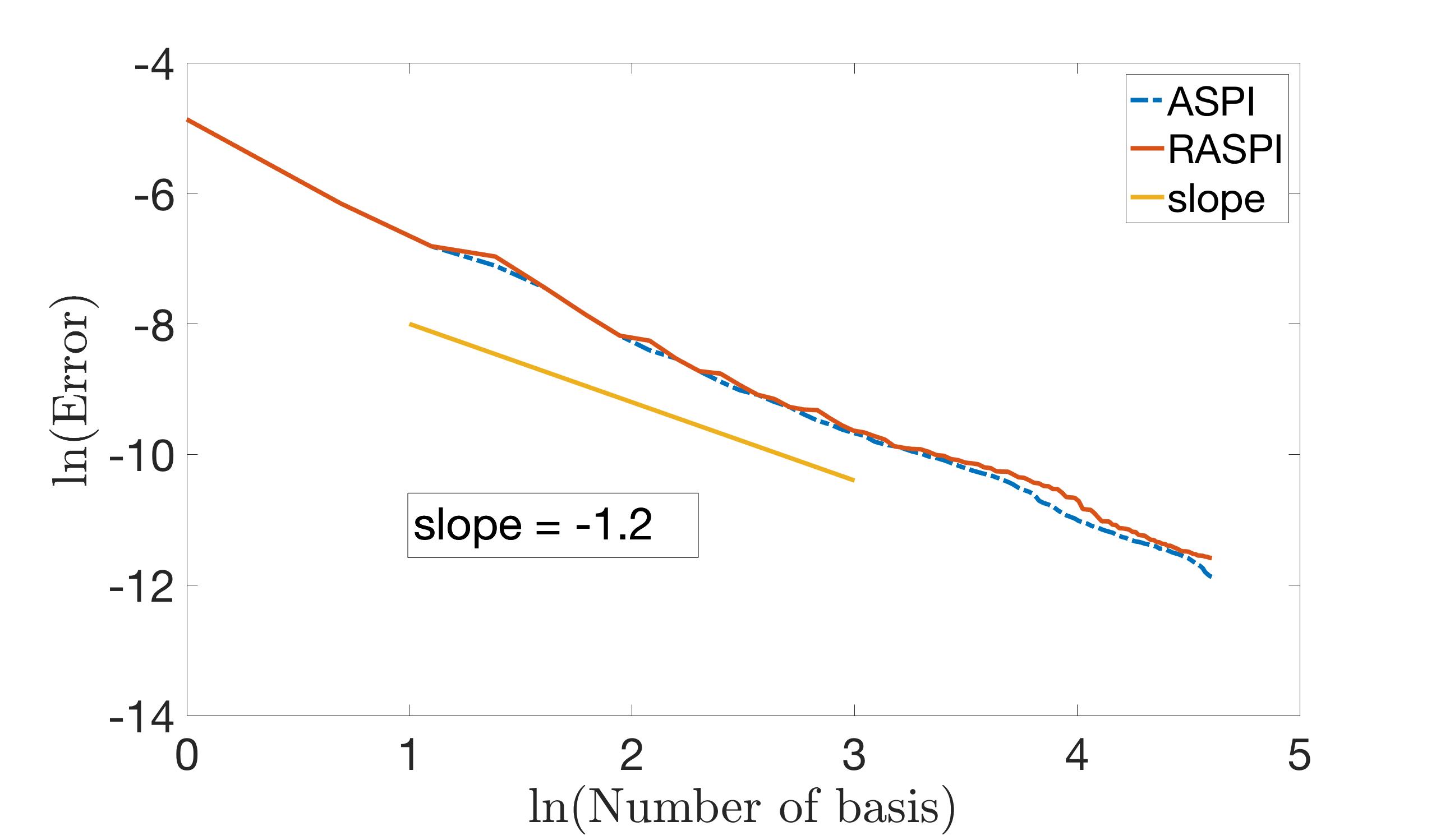}\qd\includegraphics[width=0.5\textwidth]{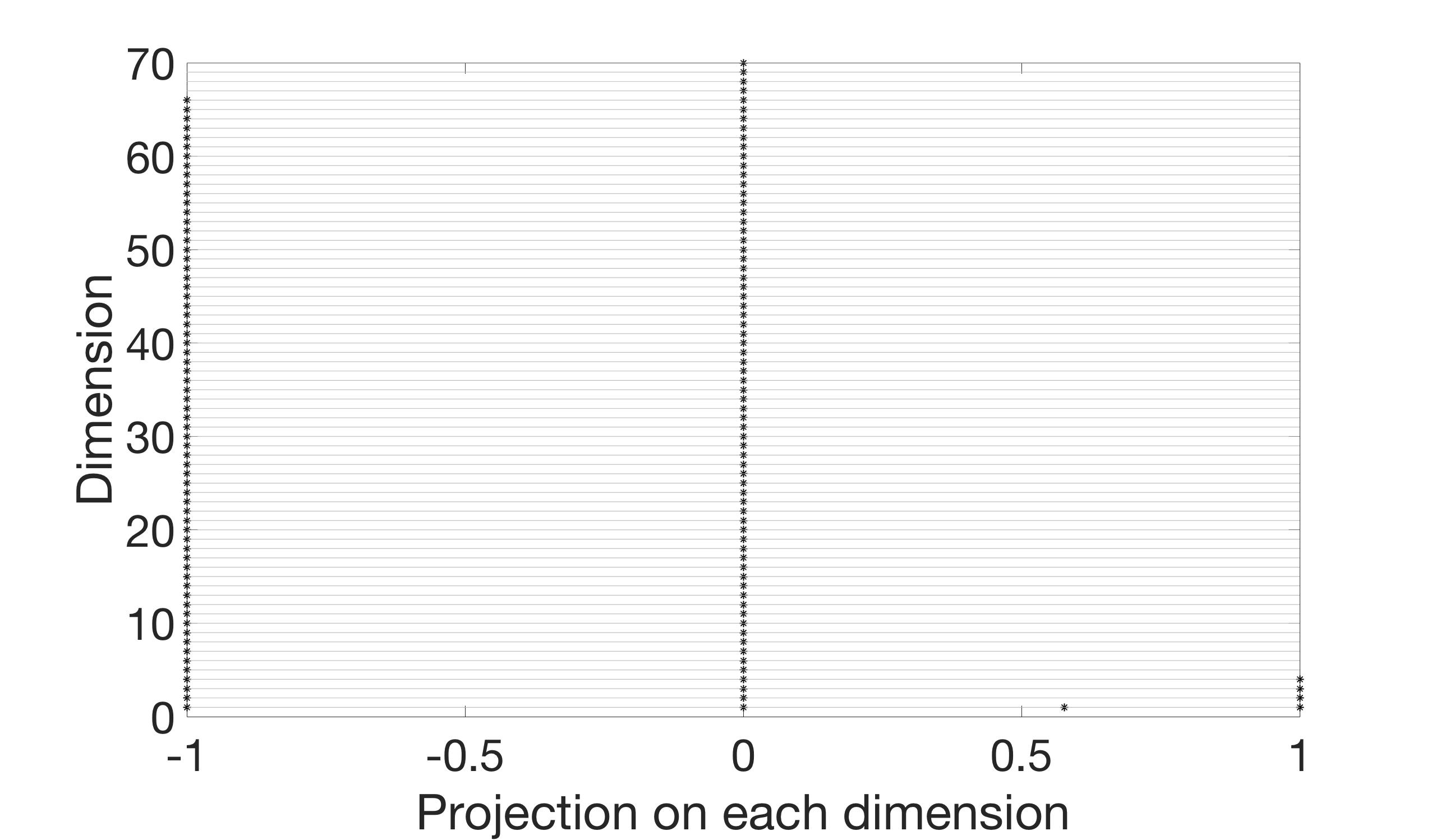}}\\
\subfloat[The convergence rate for the case of $\displaystyle E(x,\bz) = \frac{\sin(x)}{2} + \sum_{j=1}^{100}\frac{\cos(jx)}{j}z_j$]{\includegraphics[width=0.5\textwidth]{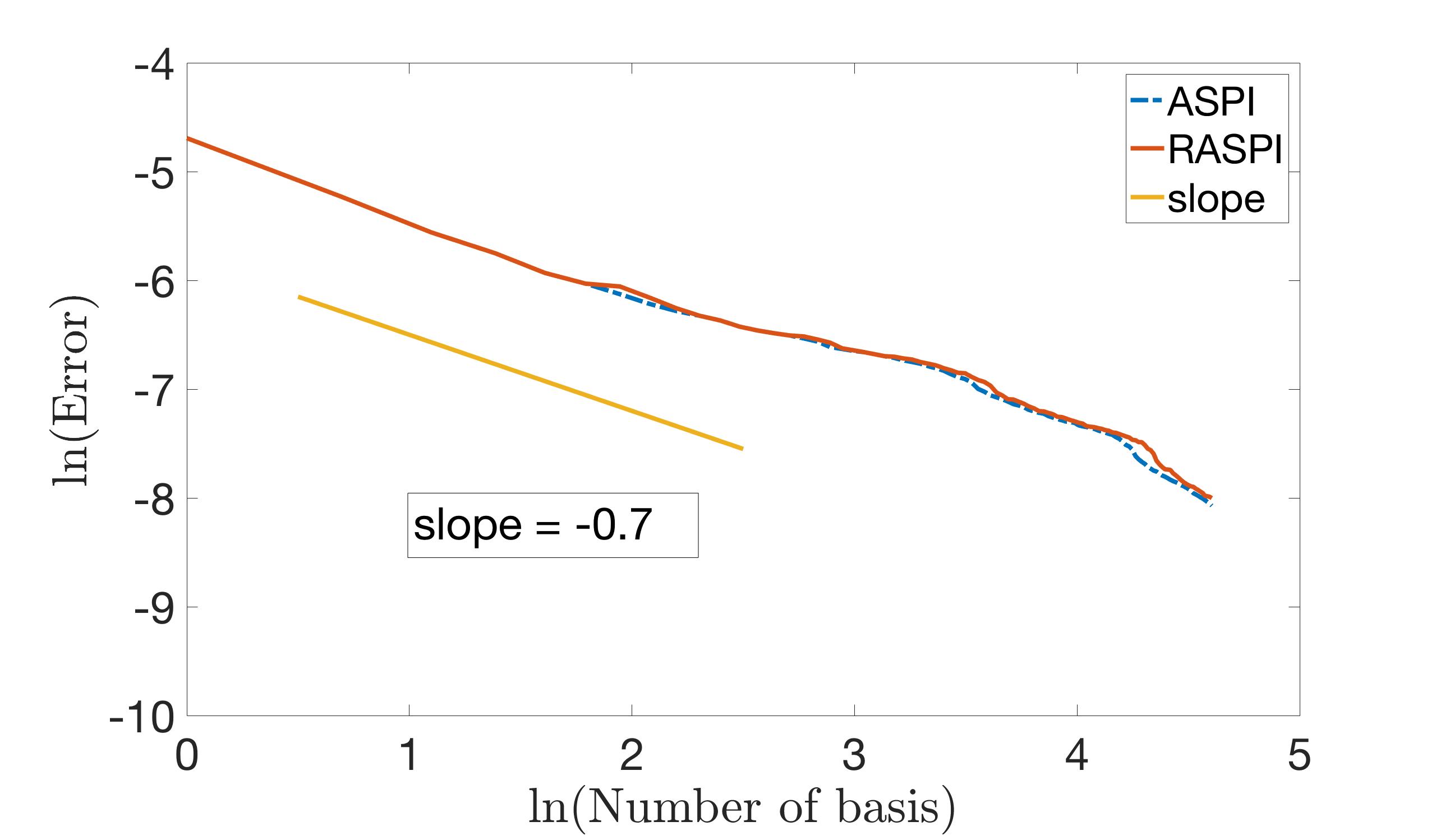}\qd\includegraphics[width=0.5\textwidth]{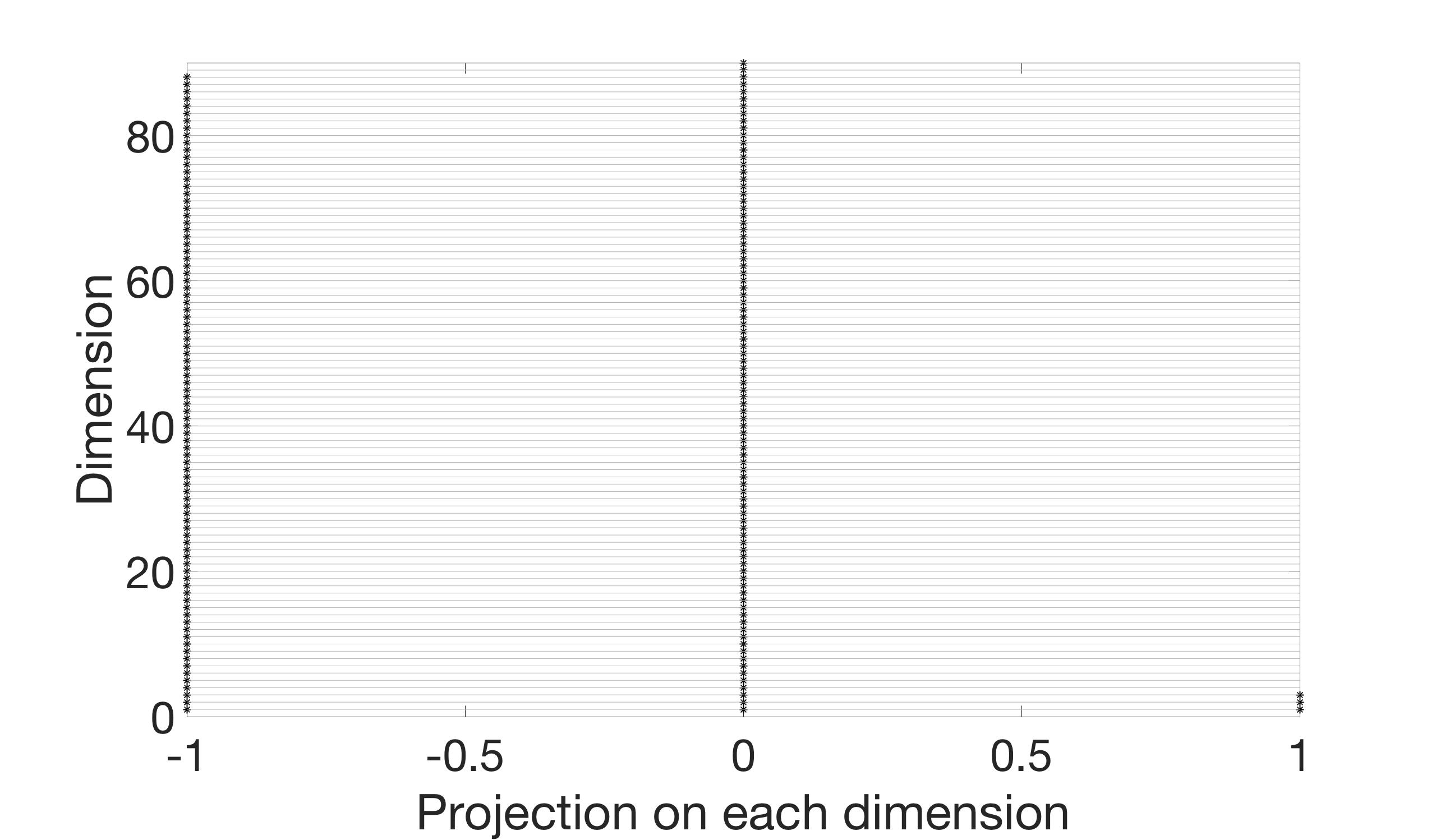}}
\caption{The convergence rate for error defined in (\ref{def of error}) of the approximate solution to (\ref{eqn_eq1}) at $t=1$ with different $E$ obtained by the ASPI method and the RASPI method, for $\e = 1$.}
\label{fig_1}
\end{figure}

However, as stated at the end of Section \ref{ASPI}, for the ASPI method, in order to get the optimal $N$ bases, one needs to compare all possible $a_{\nu}$ for $\nu\in \t{N}(\L_n)$, which involves the computation of the solution $f(T,x,v,\bz)$ at $z_\nu$ for $\nu\in \t{N}(\L_n)$. In other words, the number of  sample points used is $\#(N(\L_{100}))$, which is much larger than $100$. For example, it is 300, 2325, 3933 for case (a), (b), (c) respectively. By taking all of these sample points into account, the decay rate of ASPI corresponding to the number of sample points are shown in Figure \ref{fig_2}. For each sample the decay rate for each example is $O\(N^{-2}\)$, $O\(N^{-0.9}\)$, $O\(N^{-0.3}\)$ respectively. In particular, for the case $E_j = O\(j^{-1}\)$, it converges slower than the Monte Carlo method. 

\begin{figure}[htbp]
\subfloat[]{\includegraphics[width=0.33\textwidth]{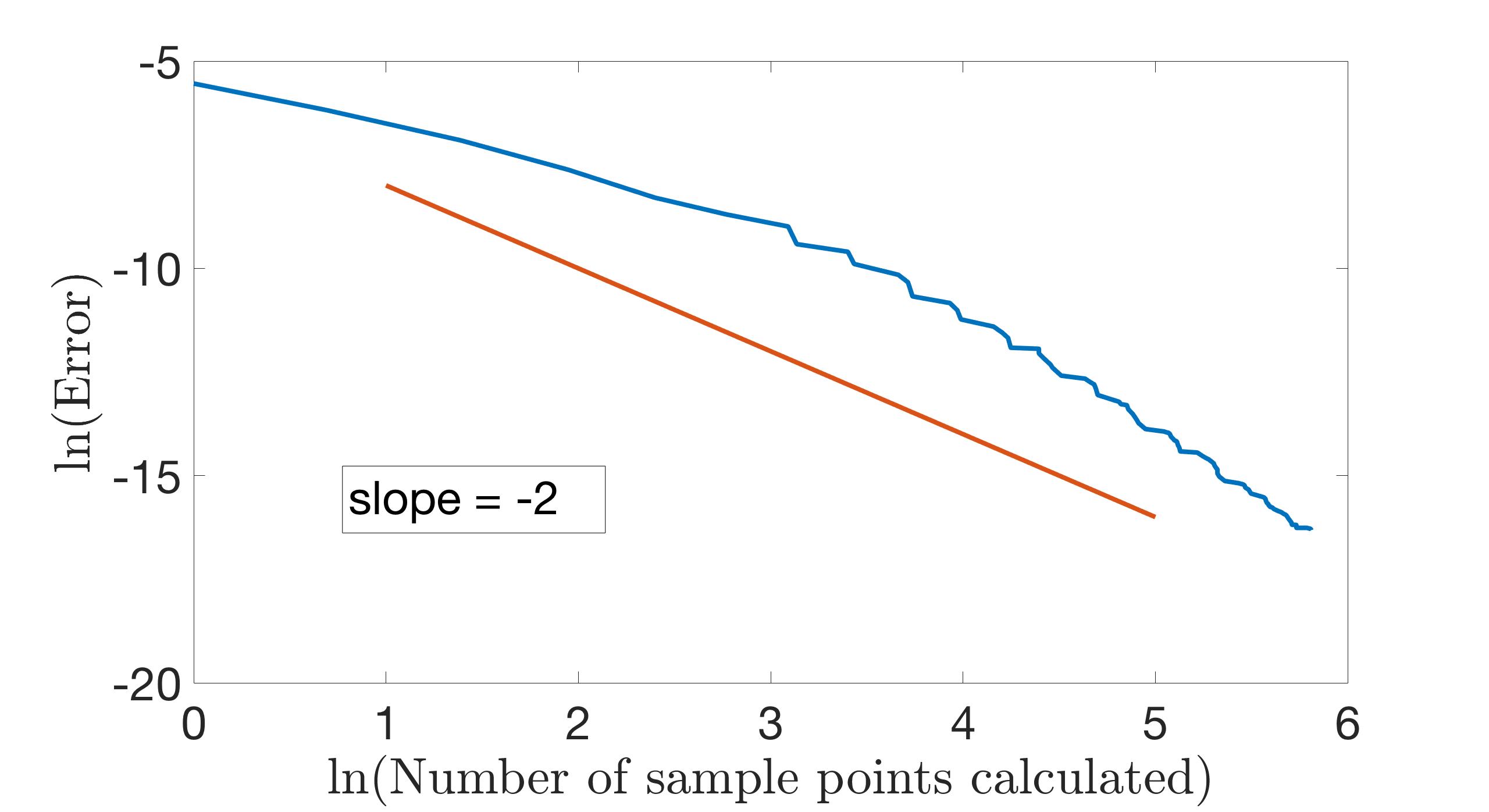}}
\subfloat[]{\includegraphics[width=0.33\textwidth]{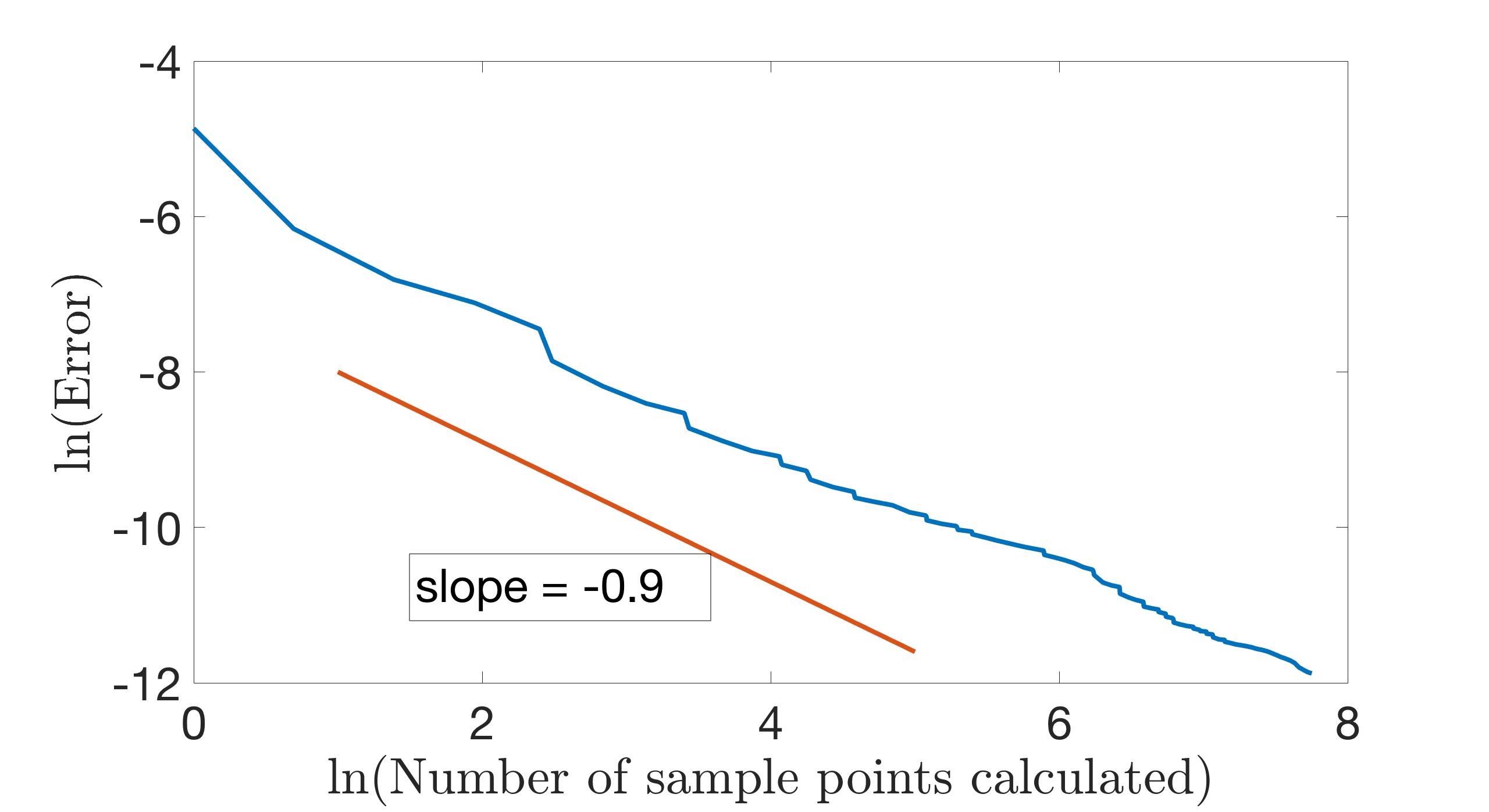}}
\subfloat[]{\includegraphics[width=0.33\textwidth]{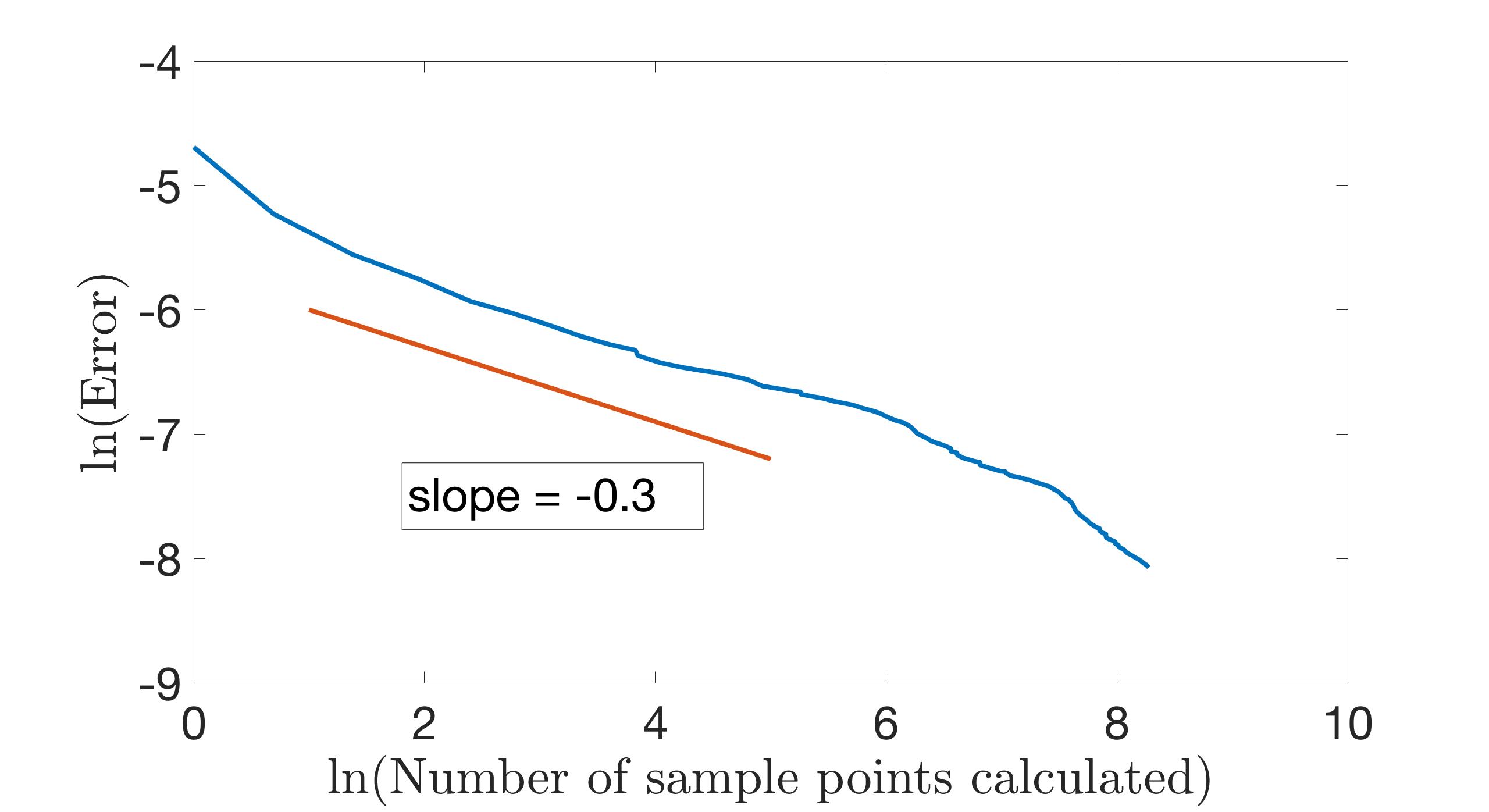}}
\caption{The convergence rate for error defined in (\ref{def of error}) of the approximate solution by the ASPI method  with respect to the number of sample points actually calculated. (a), (b), (c) are the cases of different $E_j$ in (\ref{eq_E}).}
\label{fig_2}
\end{figure}

The right column of Figure \ref{fig_1} shows the projection of the $100$ selected sample points on each dimension. One finds that for all three cases, the number of projection points gets smaller as the dimension gets higher. One also notes that all the dimensions larger than $17$, $66$, $88$ only have one projection point for three cases respectively. This indicates that when $\ll E_j \rl$ decay slower, then more points are projected to higher dimension.

\subsection{Efficiency of the greedy search}
For both methods, one needs to do greedy search for $\nu \in \t{N}(\L_n)$ at the $n$-th step. In Figure \ref{fig_4}, we show that the greedy search is more efficient than just randomly choosing $\nu_{n+1}$ from $\t{N}(\L_n)$. We call the method without greedy search the anisotropic Monte Carlo method. At the $n$-th step, one does the following, 
\begin{algorithm} (Anisotropic MC)
\label{algo: MC}
\begin{itemize}
\item At n-th step, one has $\L_n$, $I_{\L_n}$.
\begin{itemize}
\item [-] Construct $\t{N}(\L_n)$.
\item [-] Uniformly draw $\nu_{n+1}$ from $\t{N}(\L_n)$, compute $f_{\nu_{n+1}} = f(T, \bz_{\nu_{n+1}})$, then construct $\a_{\nu_{n+1}}  = f_{\nu_{n+1}}  - I_{\L_n}(\bz_{\nu_{n+1}})$.
\end{itemize}
\end{itemize}
\end{algorithm}
Since we have shown in Figure \ref{fig_1} that ASPI and the RASPI have almost the same decay rate, so only the decay rate of the RASPI is shown in Figure \ref{fig_4}. Figure \ref{fig_4} shows that for the same number of sample points one calculated, including those in the greedy search, the adaptive greedy search methods (ASPI and RASPI) have faster decay of error compared with the anisotropic Monte Carlo method.  And one notices that as $s$ becomes bigger, that is $\ll \p_j \rl_{W^{1,\infty}_x}$ decays faster,  the benefit one gains from the greedy search becomes more significant. 

\begin{figure}[htbp]
\subfloat[The error for the case of $\displaystyle E(x,\bz) = \frac{\sin(x)}{2} + \sum_{j=1}^{100}\frac{\cos(jx)}{2^j}z_j$]
{\includegraphics[width=0.8\textwidth]{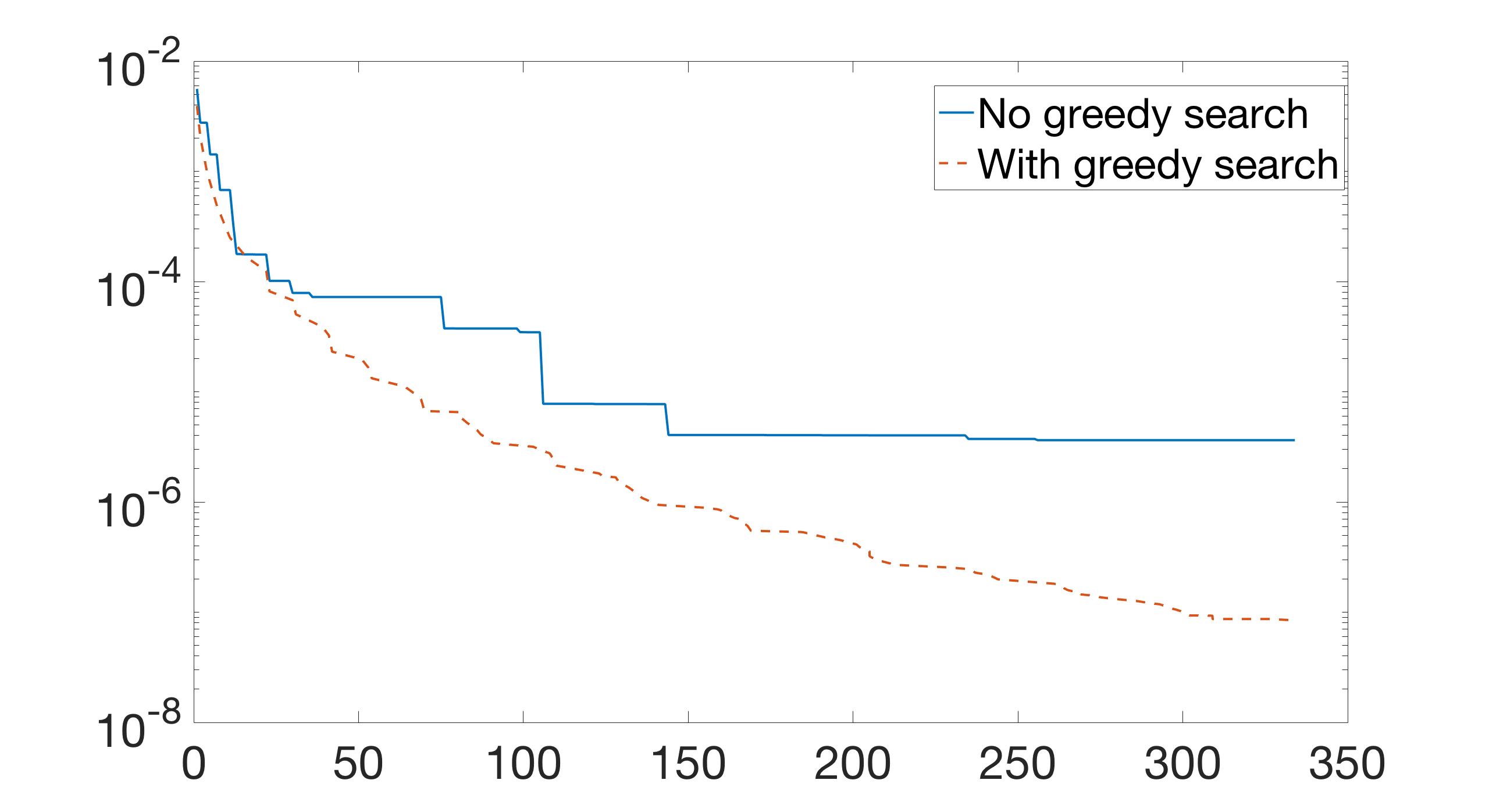}}\\
\subfloat[The error for the case of $\displaystyle E(x,\bz) = \frac{\sin(x)}{2} + \sum_{j=1}^{100}\frac{\cos(jx)}{j^2}z_j$]
{\includegraphics[width=0.8\textwidth]{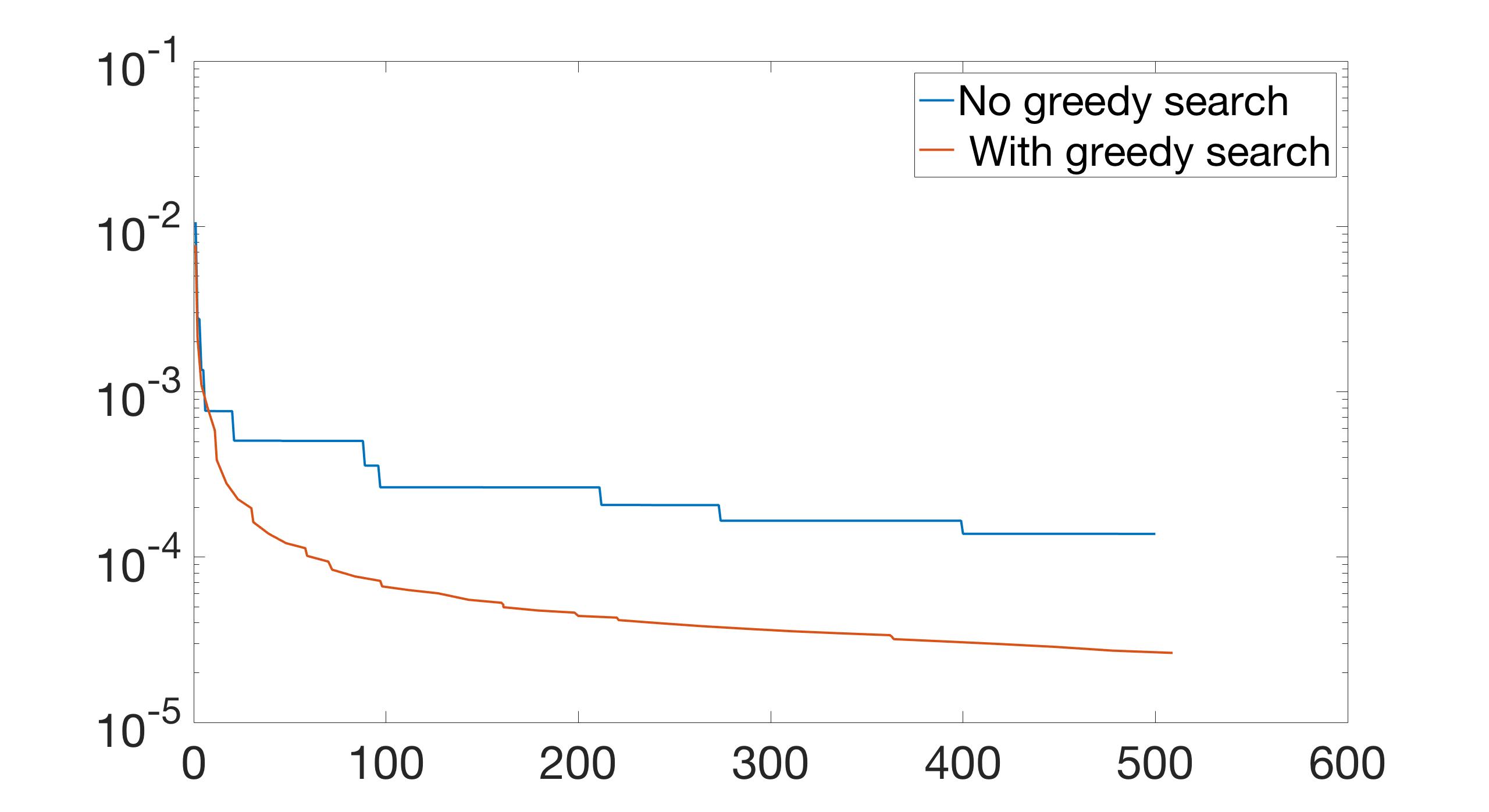}}\\
\subfloat[The error for the case of $\displaystyle E(x,\bz) = \frac{\sin(x)}{2} + \sum_{j=1}^{100}\frac{\cos(jx)}{j}z_j$]
{\includegraphics[width=0.8\textwidth]{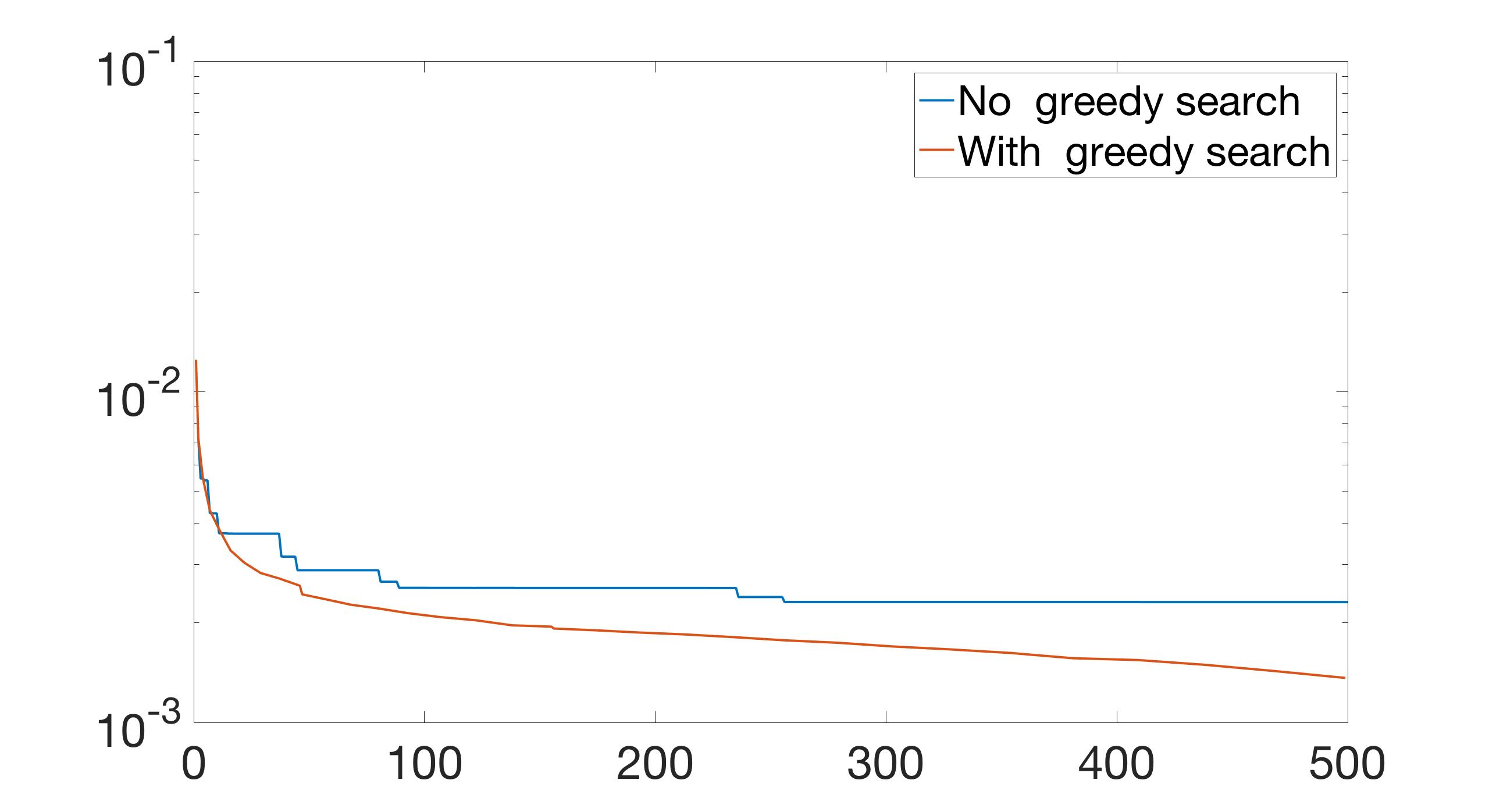}}
\caption{The error of the approximate solution to (\ref{eqn_eq1}) at $t=1$ with different $E_j$ obtained by Algorithm \ref{algo: MC} and the RASPI, where we set $\e = 1$.}
\label{fig_4}
\end{figure}

\subsection{Time dependent electric field}
Now, we will test some examples where the electric field is time dependent, 
\begin{align}
	&a). \ E_j(t,x) =\frac{1}{2^j}\(\cos(jx) + \frac{1}{(1+t)^2}\),\nonumber\\
	&b). \  E_j(x) = \frac{1}{j^2}\(\cos(jx) + \frac{1}{(1+t)^2}\),\nonumber\\
	&c). \ E_j(x) =\frac{1}{j}\(\cos(jx) + \frac{1}{(1+t)^2}\).\label{eq_E of t}
\end{align}
The decay rate of the RASPI is shown in Figure \ref{fig: eq4}. We see the decay rate is slower than the case independent of $t$ as shown in Figure \ref{fig_1}, which is expected from the theoretical result. Since the decay rate in Theorem \ref{conv rate of t} depends on $\l\{\ll \pt^\nu f \rl\r\}_{\nu\in\mF}$, and  $\ll \pt^\nu f \rl= \ll \pt^\nu h \rl$, for $\lv\nu\rv >0$. Notice that the upper bound of $\ll \pt^\nu h \rl$ is
\begin{equation*}
    \ll \pt^\nu h(t) \rl_{L^\infty(U,V)} \leq Q(t)\(|\nu|!\)d^\nu,
\end{equation*}
where 
\begin{equation*}
    Q(t) =  \min\l\{\frac{1}{\e} e^{-\frac{\xi}{\e^2}t}, e^{-\xi t}\r\}2\(\ll h(0) \rl_V+ \sqrt{\frac{\lam \bar{D}}{C_E}}\).
\end{equation*}
For the different cases of $\E_j$ in (\ref{eq_E}) and (\ref{eq_E of t}), since $\lim_{t\to \infty} E_j(t,x)$ in (\ref{eq_E of t}) is equal to $\E_j$ in (\ref{eq_E}), so the only difference in the upper bound of $\ll \pt^\nu h \rl$ is on $\bar{D}$ appearing in $Q(t)$ and defined in (\ref{def of barD}). Specifically, $\bar{D} = 0$ for the time independent case and $\bar{D} > 0 $ for the time dependent case. So the time dependent case should have slower decay rate compared to the time independent case. 

\begin{figure}[htbp]
\subfloat[]{\includegraphics[width=0.33\textwidth]{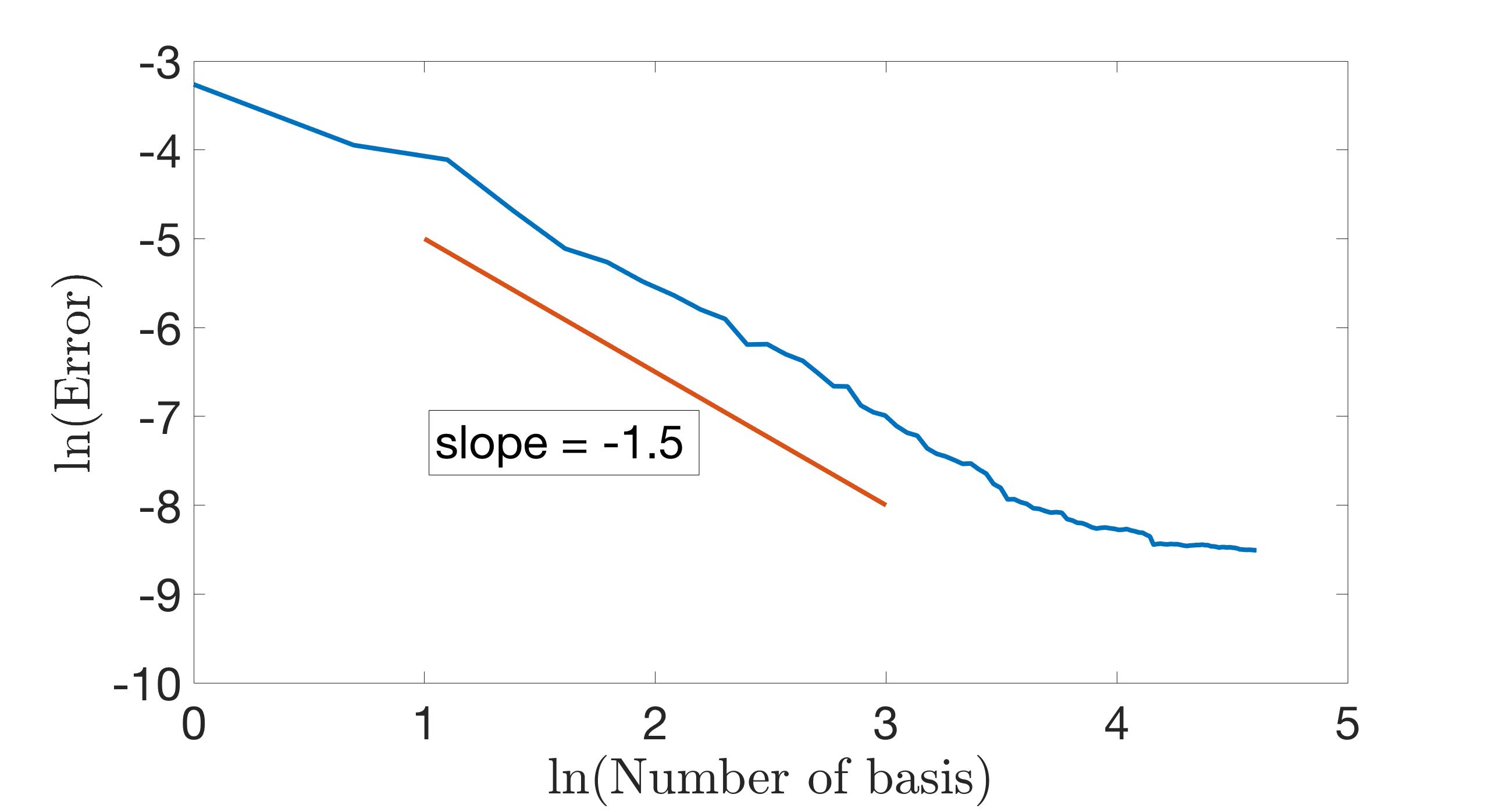}}
\subfloat[]{\includegraphics[width=0.33\textwidth]{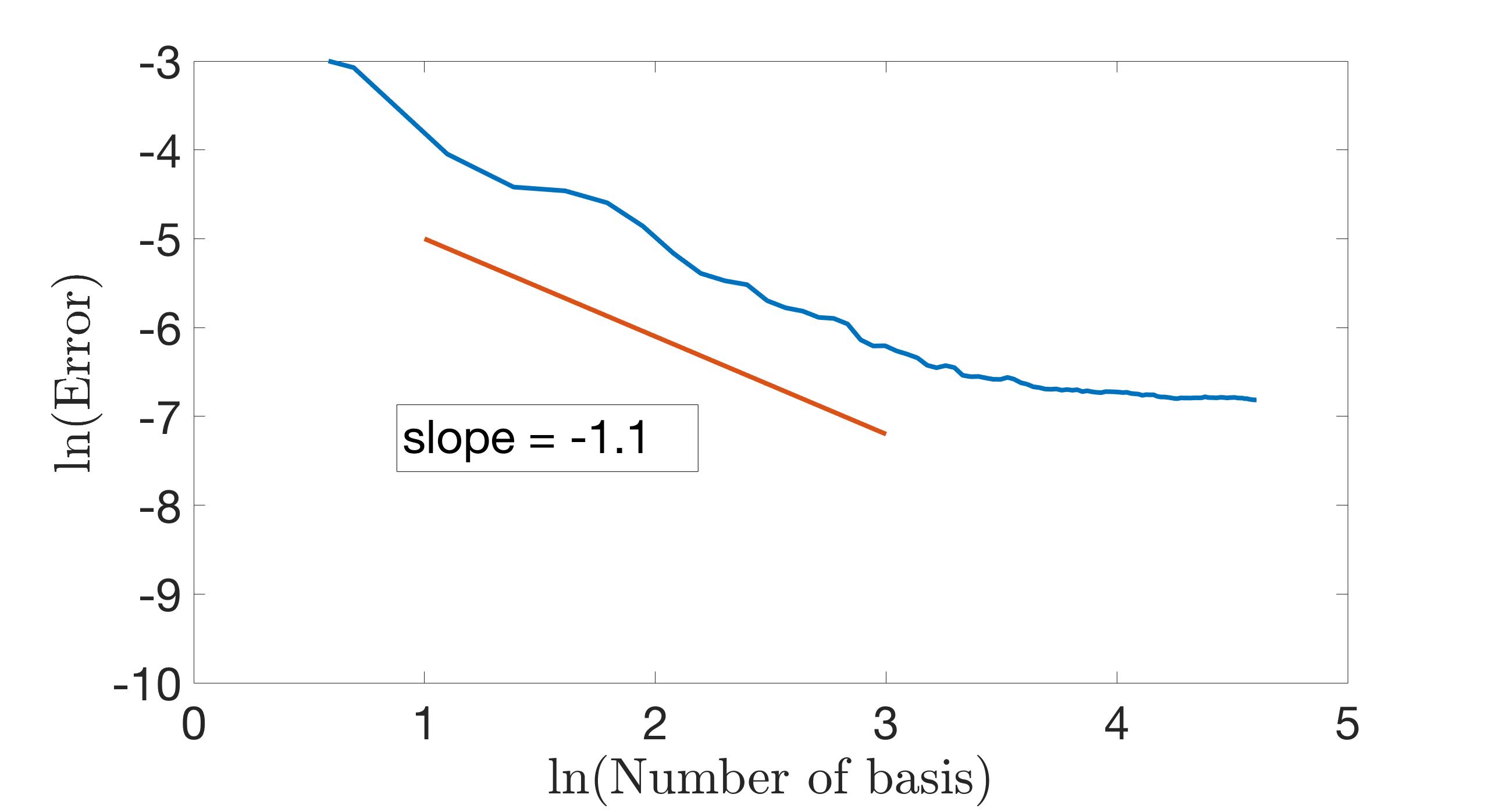}}
\subfloat[]{\includegraphics[width=0.33\textwidth]{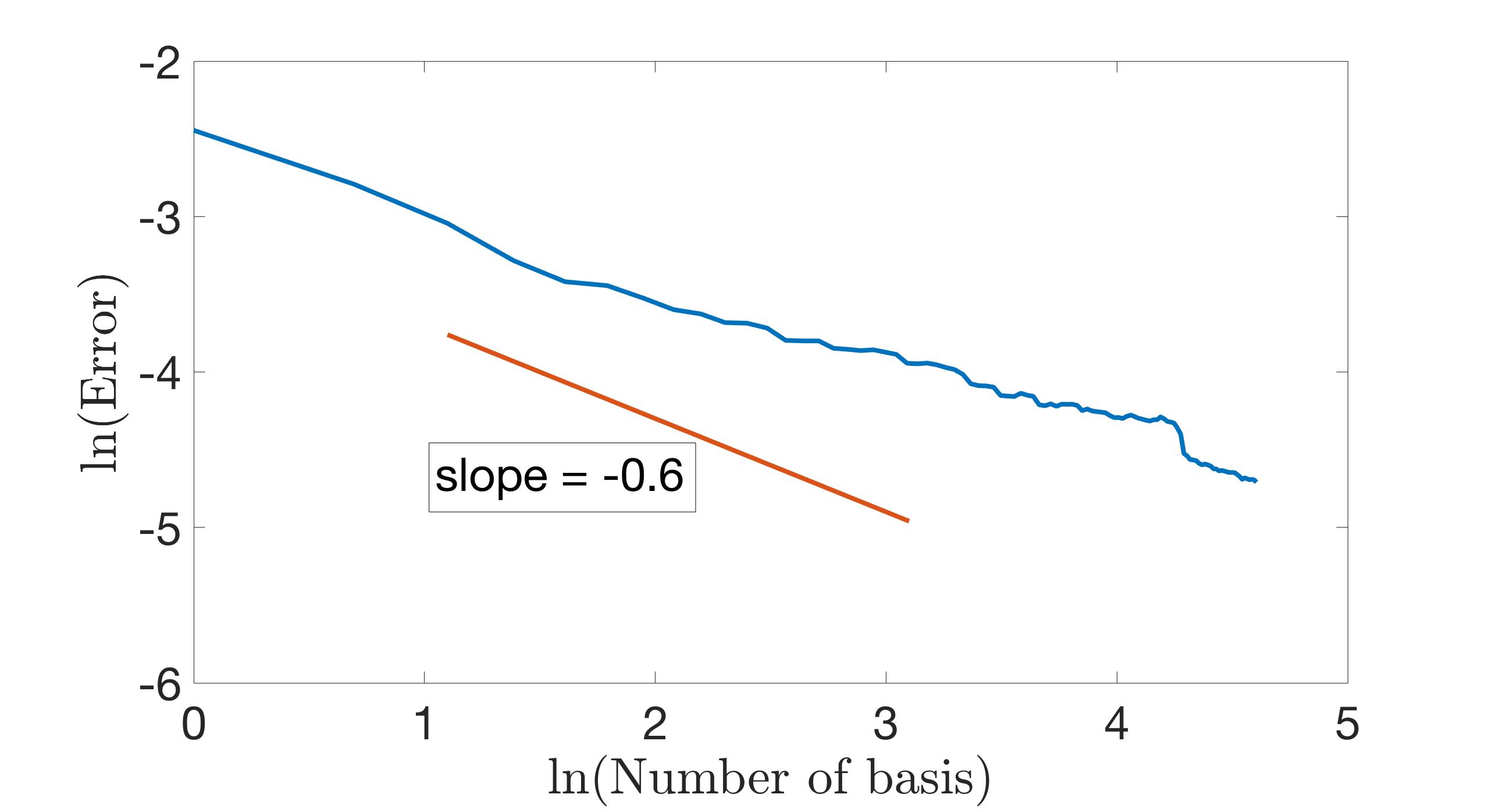}}
\caption{The convergence rate for error of the approximate solution by the RASPI with respect to the number of sample points. (a), (b), (c) are the cases of different $E_j$ in (\ref{eq_E of t}).}
\label{fig: eq4}
\end{figure}

\subsection{$\e$ dependency}
Finally, we will check the error dependence on different $\e$. The error of the RASPI method  is shown in Figure \ref{fig: eq5}. The decay rate for different $\e$ is similar, but smaller $\e$ yields smaller error. Our explanation on this is that for smaller $\e$,  the solution is closer to the global Maxwellian (\ref{def of M}), which is deterministic thus the sampling error is less relevant. 
\begin{figure}[htbp]
{\includegraphics[width=\textwidth]{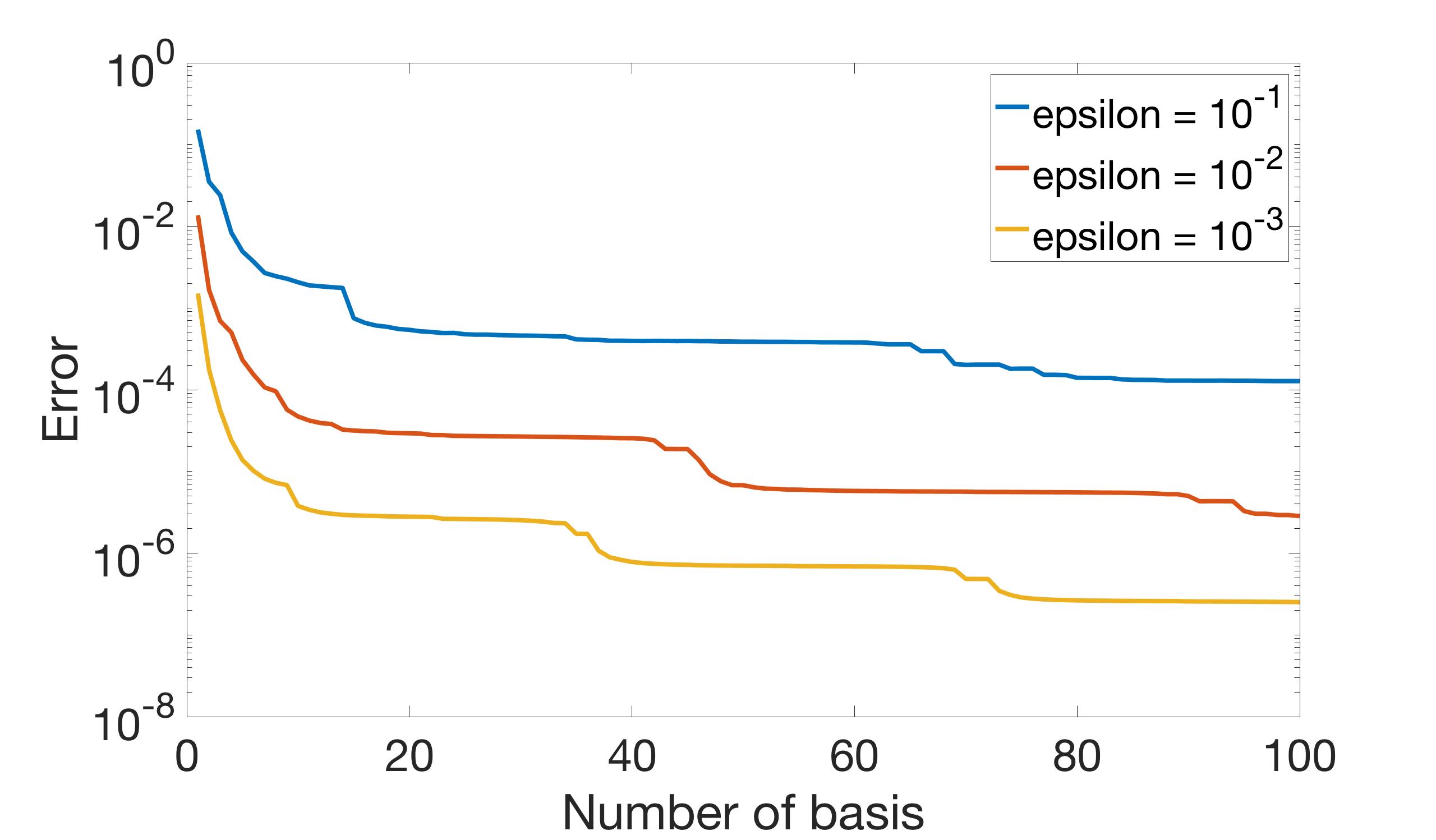}}
\caption{The error of the RASPI with respect to the number of sample points, with $E_j$ defined in (\ref{eq_E}) b).}
\label{fig: eq5}
\end{figure}

\section{Conclusion}
\label{Concl}
In this paper, we first showed theoretically that if the forcing term $E(t,x,z) = \bar{E}(t,x)+\sum_{j\geq}E_j(t,x)z_j$ has anisotropic property in random space,  converges to the steady state  $E^\infty(x,z)$ fast enough and is
bounded above,  then the best N approximation based on the Legendre basis converges to the solution of the Vlasov-Fokker-Planck equation in random space
with an error of $O\(N^{-\(\frac{1}{p} - \frac{1}{2}\)}\)$, for $\(\ll E_j\rl_{W^{1,\infty}}\)_{j\geq1} \in \ell^p$. 

Numerically, we develop the residual based adaptive sparse polynomial
interpolation (RASPI) method based on the adaptive sparse polynomial
interpolation (ASPI). We show through numerical examples that RASPI
converges to the solution independent of dimension of the random variables. We also show that for  general linear kinetic equation, or equivalently, for general time dependent and implicit scheme,  the ratio of the computation cost of the ASPI to the RASPI is $O(\d^{-5+2/s})$ for $\frac{1}{2}\leq s\leq 1$ and $O(\d^{-3})$ for $s\geq1$,  which means that the faster $\ll E_j\rl_{W^{1,\infty}}$ decays, the more the RASPI saves. 

There are still several open questions worthy of study in the future.
For example, the rigorous convergence rate of the RASPI and the ASPI method
remain to be established. Another important problem is whether nonlinear kinetic equations, such as the Boltzmann equation and Vlasov-Poisson-Fokker-Planck equations  with high dimensional uncertain parameters, can be solved by
these methods.

\section*{Acknowledgement}
The first author was partially supported by NSF grants DMS-1522184, DMS-1819012, DMS-1107291: RNMS KI- Net, and NSFC grant No. 31571071. The third author has received funding of the Alexander von Humboldt-Professorship
program,   the European Research Council (ERC) under the
European Union's Horizon 2020 research and innovation program (grant agreement NO. 694126-
DyCon9,  the Air Force Office of Scientific
Research under Award NO: FA9550-18-1-0242, the Grant MTM2017-92996-C2-1-R COSNET of MINECO (Spain),  the ELKARTEK project KK-2018/00083 ROAD2DC of the Basque Government, and  the European Unions Horizon 2020 research and innovation program under the Marie
Sklodowska-Curie grant agreement NO. 765579-ConFlex.

\appendix
\section*{Appendices}
\addcontentsline{toc}{section}{Appendices}
\renewcommand{\thesubsection}{\Alph{subsection}}

\subsection{The proof of Lemma \ref{energy est of t}}
\label{proof of energy est of t}
\subsubsection{The proof of the time independent random electric field}
\setcounter{equation}{0}
\setcounter{theorem}{0}
\renewcommand\theequation{A.\arabic{equation}}
\renewcommand\thetheorem{A.\arabic{theorem}}
We will first prove the case when the random electric field is time independent, that is, $E(t,x,z)\equiv E(x,z)$.
\begin{lemma}
\label{energy est}
Under Condition \ref{cond of E of t}, for  $\forall \bz \in U$,  the following estimates hold, 
\begin{align}
	&|\nu| = 0:  \pt_t G^0_i  + \frac{\eta}{\e}\ll h\rl_V^2  \leq  0;\label{G}\\
	&|\nu| > 1: \pt_tG_i^\nu +  \frac{\eta}{\e}\ll \pt^{\nu}h\rl_V^2\leq \frac{2}{\lam\e}\sum_{\nu_j\neq0} \nu_j^2C_j  \ll \pt^{\nu-e_j}h \rl_V^2,\label{G_nu}
\end{align}
where $i = 1,2$, $\eta = \frac{C_s}{10}$, and 
\begin{align}
	&\frac{\e}{2\lam} \ll \pt^{\nu}h \rl_V^2  \leq G_1^\nu \leq \frac{3}{2\lam\e} \ll \pt^{\nu}h \rl_V^2,\qd \frac{1}{2\lam\e} \ll \pt^{\nu}h \rl_V^2  \leq G_2^\nu \leq \frac{3}{2\lam\e} \ll \pt^{\nu}h \rl_V^2 .\label{G_2}
\end{align}
\end{lemma}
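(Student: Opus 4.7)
\medskip

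\noindent\textbf{Proof proposal.} The plan is to apply $\pt^\nu$ to the microscopic equation (\ref{micro eq with t}), exploit that $E(t,x,\bz) = \bar E + \sum_j z_j E_j$ is \emph{affine} in $\bz$ (so the Leibniz rule produces at most one-term commutators), and then run a hypocoercivity-style energy estimate against the Lyapunov functional $G^\nu_i$. In the time-independent case, the source $v\sM(E-\einf)e^{-\pinf}$ vanishes, so after differentiation the equation for $\pt^\nu h$ reads
\begin{align*}
\e\pt_t \pt^\nu h + v\pt_x \pt^\nu h - \tfrac{1}{\e}\mL \pt^\nu h
= -E\(\pt_v - \tfrac{v}{2}\)\pt^\nu h - \sum_{\nu_j\ne 0}\nu_j E_j\(\pt_v - \tfrac{v}{2}\)\pt^{\nu - e_j} h,
\end{align*}
because $\pt^{e_j}E = E_j$ and all higher derivatives of $E$ vanish. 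The associated macro equations for $\pt^\nu \s,\pt^\nu u$ are obtained by integrating in $v$ against $\sM$ and $v\sM$, as in \cite{hwang2013vlasov,ZhuJin18}.

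First I would differentiate $G^\nu_i$ in time piece by piece. Testing the $\pt^\nu h$-equation against $\pt^\nu h$ gives the ``micro'' balance
$\frac{\e}{2}\pt_t\ll\pt^\nu h\rl^2 - \frac{1}{\e}\la \mL\pt^\nu h,\pt^\nu h\ra = \text{(forcing)}$,
and analogously for $\ll\pt_x\pt^\nu h\rl^2$ after commuting $\pt_x$ through (which commutes with $v\pt_x$ and $\mL$). Coercivity (\ref{coercivity}) then yields $\geq \frac{\lam}{\e}\ll(1-\Pi)\pt^\nu h\rl_\o^2$ and similarly for $\pt_x\pt^\nu h$. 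For the cross term $\la \pt^\nu u, \pt_x\pt^\nu \s\ra$ I would use the standard hypocoercivity identity: multiplying the micro equation by $v\sM$ and integrating produces $\pt_t \pt^\nu u = -\pt_x\pt^\nu\s - \pt_x\la v^2\sM,\pt^\nu h\ra - \pt^\nu u + \text{(forcing)}$, which, combined with $\pt_t \pt^\nu\s + \pt_x\pt^\nu u = 0$ from mass conservation, recovers positive $\ll \pt_x\pt^\nu\s\rl^2$ plus controllable mixed terms. Together with the Poincaré inequality (\ref{sobolev const}) applied to $\pt^\nu\s$ (which has zero mean by (\ref{initial cond}) and conservation), this upgrades the $(1-\Pi)$-coercivity to full coercivity in $\ll\pt^\nu h\rl_V^2$, producing the desired $\frac{\eta}{\e}\ll\pt^\nu h\rl_V^2$ dissipation with $\eta = C_s/10$.

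The forcing on the right-hand side splits into the ``diagonal'' piece $-E(\pt_v - v/2)\pt^\nu h$ and the ``commutator'' pieces $-\nu_j E_j(\pt_v - v/2)\pt^{\nu-e_j}h$. The diagonal piece is absorbed into the dissipation using $\ll E\rl_{L^\infty(U,W^{1,\infty}_x)}\le C_E \le \lam C_s/8$ and Cauchy--Schwarz with $\e$-weighted Young's inequality, exactly enough to keep a constant fraction of $\frac{\lam}{\e}\ll(1-\Pi)\pt^\nu h\rl_\o^2$. Each commutator piece is handled by Cauchy--Schwarz
\begin{align*}
\l|\la \nu_j E_j(\pt_v - \tfrac{v}{2})\pt^{\nu-e_j}h,\pt^\nu h\ra\r|
\le \tfrac{\lam}{4\e}\ll(1-\Pi)\pt^\nu h\rl_\o^2 + \tfrac{\nu_j^2 C_j^2}{\lam\e}\ll\pt^{\nu-e_j}h\rl^2,
\end{align*}
with one further application to $\pt_x$-commutators to recover the $V$-norm, yielding exactly $\frac{2}{\lam\e}\sum_{\nu_j\ne 0}\nu_j^2 C_j\ll\pt^{\nu-e_j}h\rl_V^2$ after collecting the $|\nu|=0$ case separately (where this sum is empty, giving (\ref{G})). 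The equivalence (\ref{G_2}) has already been verified in the bulk of the paper via Young's inequality on the cross term $\e\la\pt^\nu u,\pt_x\pt^\nu\s\ra$, so it only needs to be quoted.

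The main obstacle is the careful calibration of the constants $\theta_i$ and the coefficients in Young's inequality so that after summing the micro dissipation, the hypocoercive contribution from the cross term, and the absorbed diagonal forcing, what remains is simultaneously bounded below by $\frac{\eta}{\e}\ll\pt^\nu h\rl_V^2$ in both scaling regimes $\th_1 = 8/(7\lam)$ and $\th_2 = 8/(7\lam\e^2)$; this is precisely where the smallness assumption $C_E\le \lam C_s/8$ is used. Once these constants are tuned, the inequalities (\ref{G}) and (\ref{G_nu}) follow by collecting terms.
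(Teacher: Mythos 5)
Your overall strategy is the same as the paper's: differentiate the microscopic equation, use that $E$ is affine in $\bz$ so only first-order commutators $\nu_j E_j(\pt_v-\frac{v}{2})\pt^{\nu-e_j}h$ survive, run the micro--macro hypocoercivity estimate built on the cross term $\e\la\pt^\nu u,\pt_x\pt^\nu\s\ra$, and apply Poincar\'e to the mean-zero $\pt^\nu\s$; the equivalence (\ref{G_2}) is indeed already established in Section 3.2 and only needs quoting.

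However, your Young's inequality for the commutator terms does not close the argument as written. You grant each index $j$ a fixed absorption quota $\frac{\lam}{4\e}\ll(1-\Pi)\pt^\nu h\rl_\o^2$ independent of $j$. Summing over all $j$ with $\nu_j\neq0$ then costs $\frac{m\lam}{4\e}\ll(1-\Pi)\pt^\nu h\rl_\o^2$ with $m=\#\{j:\nu_j\neq0\}$, and $m$ is unbounded as $\nu$ ranges over $\mF$, while the available dissipation $\frac{\lam\theta_i}{\e}\ll(1-\Pi)\pt^\nu h\rl_{V,\o}^2$ is fixed; no uniform constant in place of $\lam/4$ repairs this. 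The paper instead distributes a factor of $C_j$ to \emph{both} sides of Young's inequality,
\begin{equation*}
\lv\la \nu_j E_j\(\pt_v-\tfrac{v}{2}\)\pt^{\nu-e_j}h,\ \(\pt_v+\tfrac{v}{2}\)(1-\Pi)\pt^\nu h\ra\rv \leq \frac{C_j}{2}\(\e\nu_j^2\ll\pt^{\nu-e_j}h\rl^2 + \frac{1}{\e}\ll(1-\Pi)\pt^\nu h\rl_\o^2\),
\end{equation*}
so that the absorbed part sums to $\frac{1}{2\e}\(\sum_j C_j\)\ll(1-\Pi)\pt^\nu h\rl_\o^2\leq\frac{C_E}{2\e}\ll(1-\Pi)\pt^\nu h\rl_\o^2$ by (\ref{cond of C_j}); this is the second place (besides the diagonal term) where $\sum_j\ll E_j\rl\leq C_E\leq\lam C_s/8$ is essential, and it is also what yields the first power of $C_j$ in (\ref{G_nu}) rather than your $C_j^2$. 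A related imprecision: after the integration by parts that converts the test function to $\(\pt_v+\frac{v}{2}\)(1-\Pi)\pt^\nu\pt_x^i h$ (using $\(\pt_v+\frac{v}{2}\)\Pi(\cdot)=0$), the other factor in the diagonal term is still the full $E\,\pt^\nu\pt_x^i h$, so part of the diagonal forcing must be absorbed by the weaker macroscopic dissipation $\frac{C_s}{4\e}\ll\pt^\nu\s\rl_V^2$ produced by the Poincar\'e step, not by the $(1-\Pi)$ coercivity alone.
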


\begin{proof}
For  $E(t,x,\bz) \equiv E(x, \bz)$, the microscopic equation (\ref{micro eq with t}) for the perturbative solution $h(t,x,v, \bz)$ is simplified to,
\begin{align}
	\e\pt_th + v\pt_xh - \frac{1}{\e} \mL h = E\(\pt_v - \frac{v}{2}\)h,
	\label{micro eq}
\end{align}
where $\mL$ is the linearized Fokker Planck operator defined in (\ref{linearized FP}) that satisfies the local coercivity property as in (\ref{coercivity}). 
If one multiplies $\sM$ and $v\sM$ to (\ref{micro eq}) respectively, and integrates them  over $v$, then one gets the {\it{macroscopic equations}},
\begin{empheq}[left=\empheqlbrace]{align}
&\e\pt_t\s + \pt_xu = 0\label{continuity}\\
&\e\pt_tu + \pt_x\s + \int v^2\sM (1-\Pi)\pt_xh dv + \frac{1}{\e}u = -E\s.
	\label{macro eq}
\end{empheq}
The first equation is the perturbative continuity equation, while the second one is the perturbative momentum equation. Notice the operators $\Pi$ and $1-\Pi$ are perpendicular to each other in $L^2_{x,v}$, that is,
\begin{align}
	\ll h\rl^2 = \ll \Pi h \rl^2 + \ll (1-\Pi) h\rl^2 = \ll \s \rl^2 + \ll (1-\Pi) h\rl^2.
\end{align}

If one takes $\pt^\nu$ and $\pt^\nu\pt_x$  to (\ref{micro eq}), and multiplies $\pt^\nu_\bz h$ and $\pt^\nu\pt_xh$ respectively, then integrates them over $x,v$ and adds the two equations together, one has
\begin{align}
	&\frac{\e}{2}\pt_t \ll \pt^{\nu}h \rl_V^2 +\frac{\lam}{\e} \ll (1-\Pi)\pt^{\nu}h\rl_{V,\o}^2
	\nonumber\\
	\leq& \underbrace{\la \pt^\nu\(E\(\pt_v-\frac{v}{2}\)h \), \pt^\nu h\ra}_{I} 
	+\underbrace{\la \pt^\nu\pt_x\(E\(\pt_v-\frac{v}{2}\)h \), \pt^\nu \pt_xh\ra}_{II}, \label{eq: proc_1}
\end{align}
where the second term $II$ comes from the hypocoercivity of $\mL$ in (\ref{coercivity}). If one takes $\pt^\nu$ to (\ref{macro eq}) , and multiplies $\pt^\nu \pt_x\s$, then integrates it over $x,v$, one has
\begin{align}
	&\pt_t \(\e\la \pt^{\nu}u, \pt^{\nu}\pt_x\s \ra + \frac{1}{2}\ll \pt^{\nu}\s\rl^2\)  + \frac{1}{2}\ll \pt^{\nu}\pt_x\s\rl^2\nonumber\\
	 \leq& \ll  \pt^{\nu}\pt_xu\rl^2 + \frac{1}{2}\ll (1-\Pi)\pt^{\nu}h \rl_{V,\o}^2 - \underbrace{ \la \pt^{\nu}(E\s), \pt^{\nu}\pt_x\s\ra}_{III} .
	 \label{eq: proc_2}
\end{align}
Here the first term on the LHS and the first term on the RHS come from
\begin{align}
	&\la \pt_t \pt^{\nu}u,  \pt^{\nu}\pt_x\s \ra = \pt_t\la  \pt^{\nu}u,  \pt^{\nu}\pt_x\s\ra - \la  \pt^{\nu}u, \pt_t \pt^{\nu}\pt_x\s \ra \nonumber\\
	=&  \pt_t\la  \pt^{\nu}u,  \pt^{\nu}\pt_x\s\ra + \frac{1}{\e}\la  \pt^{\nu}u, \pt^{\nu}\pt_x^2u \ra =  \pt_t\la  \pt^{\nu}u,  \pt^{\nu}\pt_x\s\ra - \frac{1}{\e}\ll  \pt^{\nu}\pt_xu\rl^2,
\end{align}
where the second equality is because of the continuity equation (\ref{continuity}). The second term on the LHS of (\ref{eq: proc_2}) is because of 
\begin{align}
	&\frac{1}{\e}\la  \pt^{\nu}u,  \pt^{\nu}\pt_x\s \ra = \la -\frac{1}{\e} \pt^{\nu}\pt_xu,  \pt^{\nu}\s \ra = \la \pt_t \pt^{\nu}\s,  \pt^{\nu}\s \ra = \frac{1}{2}\pt_t\ll \pt^{\nu}\s \rl^2. 
\end{align}
Furthermore since,
\begin{align}
	&\la  \pt^{\nu}\pt_x\s, \pt^{\nu}\pt_x\s\ra + \la \int v^2(1-\Pi)\pt_x \pt^{\nu}h\sM\,dv ,  \pt^{\nu}\pt_x\s \ra\geq \frac{1}{2}\ll \pt^{\nu}\pt_x\s \rl^2-\frac{1}{2}\ll (1-\Pi)\pt^{\nu}\pt_xh \rl^2_\o,
\end{align}
this gives the third term on the LHS and the second term on the RHS of (\ref{eq: proc_2}). 

Furthermore, integrate (\ref{continuity}) over $x$, by the periodic boundary condition, one has, 
\begin{equation*}
	\pt_t \int \s dx = 0.
\end{equation*}
Since from (\ref{initial cond}), we know that
\begin{equation*}
	\int \s(0,x,z) dx = \int h\sM dxdv = 0,
\end{equation*}
so
\begin{equation*}
	\int \s(t,x,z) dx = \int \s(0,x,z) dx  = 0.
\end{equation*}
Similar equality can be obtained for $\pt^\nu\s$. Therefore, one can apply the Poincare inequality (\ref{sobolev const}) to $\ll \pt_x\s \rl^2$ to get, 
\begin{equation*}
	\ll \pt^\nu\pt_x\s \rl^2 \geq C_s\ll \pt^\nu\s \rl_V^2.
\end{equation*}
By adding $\theta_i(\ref{eq: proc_1}) + \frac{1}{2\e}(\ref{eq: proc_2})$, and the fact that 
\begin{equation*}
    \ll  \pt^{\nu}\pt_xu\rl^2 \leq \ll (1-\Pi)\pt^{\nu}\pt_xh \rl^2\leq \ll (1-\Pi)\pt^{\nu}h \rl_{V,\o}^2,
\end{equation*}
one has, 
\begin{align}
	&\pt_t G_i^{\nu} +\frac{\lam\theta_i}{\e} \ll (1-\Pi)\pt^{\nu}h\rl_{V,\o}^2 + \frac{ C_s}{4\e}\ll \pt^{\nu}\s\rl_V^2
	\leq  \frac{3}{4\e}\ll (1-\Pi)\pt^{\nu}h \rl_{V,\o}^2 + \theta_i\(I+II\)-\frac{1}{2\e}III, \label{est}
\end{align}
where $C_s$ comes from the Poincare inequality (\ref{sobolev const}). In order to find an estimate for $G^\nu_i$, one needs to estimate terms $\theta_i\(I+II\)-\frac{1}{2\e}III$.  First notice that 
\begin{align}
	&\(\pt_v + \frac{v}{2}\)\pt^\nu \pt_x^i (\Pi h) = -\frac{v}{2}\(\pt^\nu \pt_x^i \s\)\sM + \frac{v}{2}\(\pt^\nu \pt_x^i \s\)\sM = 0, \qd \text{for }i = 0, 1, 
\end{align}
which implies that terms $I$ and $II$ can be simplified to, for $i = 0,1$,
\begin{align}
	I, II =& -\la \pt^\nu\pt_x^i(Eh), \(\pt_v + \frac{v}{2}\)\pt^\nu \pt_x^ih\ra =  -\la \pt^\nu\pt_x^i(Eh), \(\pt_v + \frac{v}{2}\)\(\pt^\nu\pt_x^i( \Pi h) + (1-\Pi)\pt^\nu \pt_x^ih\)\ra  \nonumber\\
	=&  -\la \pt^\nu\pt_x^i(Eh), \(\pt_v + \frac{v}{2}\) (1-\Pi)\pt^\nu\pt_x^i h\ra. \label{eq: proc_11}
\end{align}
Another inequality that will be used frequently later is that  for $i = 0,1$,
\begin{align}
	&\ll \(\pt_v + \frac{v}{2}\)\(1-\Pi\) \pt^\nu  \pt_x^i h\rl^2 \nonumber\\
	=& \ll \pt_v(1-\Pi)\pt^\nu  \pt_x^i h\rl^2 + \frac{1}{4}\ll v (1-\Pi)\pt^\nu  \pt_x^i h \rl^2 + \int_\O \frac{v}{2}\pt_v\((1-\Pi)\pt^\nu  \pt_x^i h\)^2dxdv  \nonumber\\
	=& \ll \pt_v(1-\Pi)\pt^\nu  \pt_x^i h\rl^2 + \frac{1}{4}\ll v (1-\Pi)\pt^\nu  \pt_x^i h \rl^2 - \frac{1}{2} \ll (1-\Pi)\pt^\nu  \pt_x^i h \rl^2
	\leq \ll (1-\Pi)\pt^\nu  \pt_x^i h \rl^2_\o. \label{proc_12}
\end{align}

Based on (\ref{eq: proc_11}) and  (\ref{proc_12}), we will bound the term $\theta_i\(I+II\)-\frac{1}{2\e}III$ for the cases $|\nu| = 0, |\nu|>1$ respectively.
Firstly, for the case $|\nu| = 0$, 
\begin{align}
	&\theta_i\(I+II\)-\frac{1}{2\e}III \nonumber\\
	=& \thi \la Eh, \(\pt_v + \frac{v}{2}\)(1-\Pi) h\ra + \thi\la \pt_x\(Eh\), \(\pt_v + \frac{v}{2}\)(1-\Pi) h\ra - \frac{1}{2\e}\la E\s, \pt_x\s\ra \nonumber\\
	\leq& \frac{\thi}{2}\ll E \rl_{L^\infty_x}\(\e\ll h \rl_V^2 + \frac{1}{\e}\ll (1-\Pi) h\rl_{V,\o}^2\) 
	+ \frac{\thi}{2}\ll \pt_xE \rl_{L^\infty_x}\(\e\ll h \rl^2 + \frac{1}{\e}\ll (1-\Pi) \pt_xh\rl_\o^2\) \nonumber\\
	&+ \frac{1}{4\e} \ll E \rl_{L^\infty_x}\(\ll\s \rl^2 + \ll \pt_x\s \rl^2\). \label{proc_13}
\end{align}
Since $\ll E \rl_{L^\infty(U,W^\infty_x)} \leq C_E$, and $\ll h \rl^2_V = \ll \s \rl_V^2 + \ll (1-\Pi)h \rl_V^2$, one can further simplify (\ref{proc_13}) to 
\begin{align}
	&(\ref{proc_13})
	\leq C_E\(\(\frac{\e\thi}{2} + \frac{1}{4\e}\)\ll \s \rl_V^2 + \frac{\thi}{\e}\ll (1-\Pi) h\rl_{V,\o}^2\). \label{proc_14}
\end{align}
Plug the above estimate into (\ref{est}), one has, 
\begin{align}
\label{appendix_B_1}
	&\pt_t G_i^0 +\frac{\lam\theta_i}{\e} \ll (1-\Pi)h\rl_{V,\o}^2 + \frac{ C_s}{4\e}\ll \s\rl_V^2
	\nonumber\\
	\leq&  \frac{3}{4\e}\ll (1-\Pi)h \rl_{V,\o}^2 + C_E\(\(\frac{\e\thi}{2} + \frac{1}{4\e}\)\ll \s \rl_V^2 + \frac{\thi}{\e}\ll (1-\Pi) h\rl_{V,\o}^2\),
\end{align}
which implies,
\begin{align}
	&\pt_t G_i^{0} + C^0_h \ll (1-\Pi)h\rl_{V,\o}^2 + C^0_\s\ll \s\rl_V^2
	\leq 0,\label{final_1}
\end{align}
where 
\begin{align}
	&C^0_h = \frac{\lam\theta_i}{\e} -\frac{3}{4\e} - \frac{C_E\thi}{\e},\qd C^0_\s =  \frac{ C_s}{4\e} -  C_E\(\frac{\e\thi}{2} + \frac{1}{4\e}\).
\end{align}
Since $C_E\leq \frac{\lam C_s}{8} \leq \frac{\lam}{16}$, so $\lam - C_E  \geq \frac{15}{16}\lam$, which implies $\(\lam - C_E\)\frac{\thi}{\e} \geq \frac{4}{\lam}$ for both $i = 1,2$. Therefore $C_h^0 \geq \frac{3}{\e}$. Since $C_s - C_E \geq \frac{7}{8}C_s$, and $\e C_E\thi \leq \frac{\e C_s}{7}$ for both $i = 1,2$, therefore $C_\s^0 \geq \frac{7C_s}{32\e} - \frac{\e C_s}{14} \geq \frac{C_s}{10\e}$. Hence plug back $C^0_h, C^0_\s \geq \frac{C_s}{10\e}$ to (\ref{final_1}), one has,
\begin{align}
	&\pt_t G_i^{0} + \frac{\eta}{\e}\ll h\rl_{V,\o}^2 \leq 0,\qd \text{for } \eta = \frac{C_s}{10},
\end{align}
which is exactly (\ref{G}) in Lemma \ref{energy est}. 

For the case where $|\nu| > 0 $, based on (\ref{eq: proc_11}),  (\ref{proc_12}) and the definition of $E = \bar{E}(x) + \sum_{j\geq1}E_j(x)z_j$, one can bound $\theta_i\(I+II\)-\frac{1}{2\e}III$ by
\begin{align}
	&\theta_i\(I+II\)-\frac{1}{2\e}III\nonumber\\
	\leq &\thi\la E\pt^{\nu}h, \(\pt_v+\frac{v}{2})(1-\Pi\)\pt^{\nu}h\ra
	+  \thi\la \pt_x\(E\pt^{\nu}h\), \(\pt_v+\frac{v}{2}\)(1-\Pi)\pt^{\nu}\pt_xh\ra 
	- \frac{1}{2\e} \la E\pt^{\nu}\s, \pt^{\nu}\pt_x\s\ra\nonumber\\
	&+\sum_{\nu_j\neq0}\(\thi\la \nu_jE_j\pt^{\nu-e_j}h, (\pt_v+\frac{v}{2})(1-\Pi)\pt^{\nu}h\ra
	+ \thi \la \pt_x\( \nu_jE_j\pt^{\nu-e_j}h\), (\pt_v+\frac{v}{2})(1-\Pi)\pt^{\nu}\pt_xh\ra \r.\nonumber\\
	&\l.-\frac{1}{2\e} \la  \nu_jE_j\pt^{\nu-e_j}\s , \pt^{\nu}\pt_x\s\ra\)\label{proc_15}.
\end{align} 
Since the three terms in the second line of the above equation is similar to the case where $|\nu| = 0$, hence one can get similar estimates as in (\ref{proc_14}). In addition,  by assumption $\ll E_j \rl_{L^\infty(U,W^{1,\infty}_x)} \leq C_j$,  one has
\begin{align}
	&(\ref{proc_15})\nonumber\\
	\leq & C_E\( \(\frac{\e\thi}{2}+\frac{1}{4\e}\)\ll \pt^{\nu}\s \rl_V^2+ \frac{\thi}{\e}\ll (1-\Pi)\pt^{\nu}h \rl_{V,\o} ^2\)\nonumber\\
	&+\frac{\thi}{2}\sum_{\nu_j\neq0} C_j\(\e\nu_j^2\ll \pt^{\nu-e_j}h \rl_V^2  + \frac{1}{\e}\ll (1-\Pi)\pt^{\nu}h \rl_{V,\o}^2\) +\frac{1}{4\e}\sum_{\nu_j\neq0}C_j\(\nu_j^2\ll\pt^{\nu-e_j}\s\rl^2 + \ll \pt^{\nu}\pt_x\s \rl^2 \)\nonumber\\
	\leq & C_E\( \frac{1}{2}\(\e\thi+\frac{1}{\e}\)\ll \pt^{\nu}\s \rl_V^2+ \frac{3\thi}{2\e}\ll (1-\Pi)\pt^{\nu}h \rl_{V,\o} ^2\)
	+\sum_{\nu_j\neq0} \nu_j^2C_j\(\frac{\e\thi}{2} + \frac{1}{4\e}\)\ll \pt^{\nu-e_j}h \rl_V^2  ,\label{proc_24}
\end{align}
where the second inequality is because that $\sum_{j\geq 1}C_j \leq C_E$. Hence plug (\ref{proc_24}) into (\ref{est}), one has, 
\begin{align}
	&\pt_t G^\nu_i + C^\nu_h \ll (1-\Pi)\pt^\nu h\rl_{V,\o}^2 + C^\nu_\s\ll \pt^\nu\s\rl_V^2
	\leq \sum_{\nu_j\neq0} \nu_j^2C_j\(\frac{\e\thi}{2} + \frac{1}{4\e}\)\ll \pt^{\nu-e_j}h \rl_V^2,
\end{align}
where 
\begin{align}
	&C^\nu_h = \frac{\lam\theta_i}{\e} -\frac{3}{4\e} - \frac{3C_E\thi}{2\e} \geq \frac{\eta}{\e},\qd C^\nu_\s =  \frac{ C_s}{4\e} -  \frac{C_E}{2}\(\e\thi + \frac{1}{\e}\)\geq \frac{\eta}{\e}
\end{align}
and 
\begin{align}
	&\frac{\e\thi}{2} + \frac{1}{4\e} \leq \frac{2}{\lam\e},
\end{align}
for both $i = 1,2$, which gives (\ref{G_nu}) in Lemma \ref{energy est}.

\end{proof}

\subsubsection{The proof of Lemma \ref{energy est of t}}

If one takes $\pt^\nu$ and $\pt^\nu\pt_x$  to (\ref{micro eq with t}), and multiplies $\pt^\nu_\bz h$ and $\pt^\nu\pt_xh$ respectively, then integrates them over $x,v$ and adds the two equations together, one has
\begin{align}
	&\frac{\e}{2}\pt_t \ll \pt^{\nu}h \rl_V^2 +\frac{\lam}{\e} \ll (1-\Pi)\pt^{\nu}h\rl_{V,\o}^2
	\nonumber\\
	\leq& \la \pt^\nu\(E\(\pt_v-\frac{v}{2}\)h \), \pt^\nu h\ra
	+\la \pt^\nu\pt_x\(E\(\pt_v-\frac{v}{2}\)h \), \pt^\nu \pt_xh\ra, \nonumber\\
	&-\underbrace{\la v\sM \pt^\nu\(\(E - \einf\)e^{-\pinf}\), \pt^\nu h\ra}_{IV} 
	-\underbrace{\la v\sM \pt^\nu\pt_x\(\(E - \einf\)e^{-\pinf} \), \pt^\nu \pt_xh\ra}_{V}.
	\label{eq: proc_1_t}
\end{align}

If one multiplies $\sM$ and $v\sM$ to (\ref{micro eq with t}), and integrates it  over $v$, then one gets
\begin{empheq}[left=\empheqlbrace]{align}
&\e\pt_t\s + \pt_xu = 0
	\label{continuity of t}\\
&\e\pt_tu + \pt_x\s + \int v^2\sM (1-\Pi)\pt_xh dv + \frac{1}{\e}u = -E\s -  \(E - \einf\)e^{-\pinf}.
	\label{macro eq of t}
\end{empheq}

Then if one takes $\pt^\nu$ to (\ref{macro eq of t}), and multiplies $\pt^\nu \pt_x\s$, then integrates it over $x,v$, one has
\begin{align}
	&\pt_t \(\e\la \pt^{\nu}u, \pt^{\nu}\pt_x\s \ra + \frac{1}{2}\ll \pt^{\nu}\s\rl^2\)  + \frac{1}{2}\ll \pt^{\nu}\pt_x\s\rl^2\nonumber\\
	 \leq& \ll  \pt^{\nu}\pt_xu\rl^2 + \frac{1}{2}\ll (1-\Pi)\pt^{\nu}h \rl_{V,\o}^2 - \la \pt^{\nu}(E\s), \pt^{\nu}\pt_x\s\ra 
	 - \underbrace{\la \pt^\nu\( \(E - \einf\)e^{-\pinf}\), \pt^\nu\pt_x\s \ra}_{VI}.
	 \label{eq: proc_2_t}
\end{align}

By comparing (\ref{eq: proc_1_t}) and (\ref{eq: proc_2_t}) with (\ref{eq: proc_1}) and (\ref{eq: proc_2}), we actually only need to bound terms $IV, V, VI$. Similar to (\ref{est}), one has the following estimates for $G_i^\nu$, 
\begin{align}
	&\pt_t G_i^{\nu} +\frac{\lam\theta_i}{\e} \ll (1-\Pi)\pt^{\nu}h\rl_{V,\o}^2 + \frac{ C_s}{4\e}\ll \pt^{\nu}\s\rl_V^2
	\nonumber\\
	\leq&  \frac{3}{4\e}\ll (1-\Pi)\pt^{\nu}h \rl_{V,\o}^2 + \l[\theta_i\(I+II\)-\frac{1}{2\e}III \r]+ \l[\theta_i\(IV+V\)-\frac{1}{2\e}VI\r]. 
	\label{est of t}
\end{align}

First notice, for $i = 0, 1$,
\begin{align}
	&\la v\sM \pt^\nu\pt_x^i\(\(E - \einf\)e^{-\pinf}\), \pt^\nu\pt_x^i\s\sM\ra \nonumber\\
	=&   \(\int vM\,dv\) \la  \pt^\nu\pt_x^i\(\(E - \einf\)e^{-\pinf}\), \pt^\nu\pt_x^i\s \ra = 0.
\end{align}
So one can break 
\begin{equation*}
    \pt^\nu\pt_x^i h = (1-\Pi)\pt^\nu\pt_x^i h +  \pt^\nu\pt_x^i\s\sM
\end{equation*}
in $(IV+ V)$, therefore
\begin{align}
\label{est_45}
	IV+V
	=&\sum_{i = 0,1} \la v\sM \pt^\nu\pt_x^i\(\(E - \einf\)e^{-\pinf}\), (1-\Pi)\pt^\nu\pt_x^i h\ra\nonumber\\
	\leq &\frac{\e}{2C_E}\ll \pt^\nu\(\(E - \einf\)e^{-\pinf}\) \rl^2_V + \frac{C_E}{2\e}\ll (1-\Pi)\pt^\nu h\rl_V^2,
\end{align}
For the term $VI$, one can bound it by 
\begin{align}
\label{est_6}
	VI\leq &\frac{1}{C_E}\ll \pt^\nu\( \(E - \einf\)e^{-\pinf}\)\rl^2 + \frac{C_E}{4}\ll \pt^\nu\pt_x\s \rl^2.
\end{align}
For $|\nu| = 0$, plug (\ref{est_45}) and (\ref{est_6}) into (\ref{est of t}), and based on the estimates (\ref{appendix_B_1}) we have already got in Appendices $\ref{proof of energy est of t}$, one has
\begin{align}
	&\pt_t G_i^0 +\frac{\lam\theta_i}{\e} \ll (1-\Pi)h\rl_{V,\o}^2 + \frac{ C_s}{4\e}\ll \s\rl_V^2
	\nonumber\\
	\leq&  \frac{3}{4\e}\ll (1-\Pi)h \rl_{V,\o}^2 + C_E\(\(\frac{\e\thi}{2} + \frac{3}{8\e}\)\ll \s \rl_V^2 + \frac{2\thi}{\e}\ll (1-\Pi) h\rl_{V,\o}^2\)\nonumber\\
	&+\frac{\e\thi}{2C_E}\ll\(E - \einf\)e^{-\pinf} \rl^2_V + \frac{1}{2\e C_E}\ll \( E - \einf\)e^{-\pinf}\rl^2,
\end{align}
which implies,
\begin{align}
	&\pt_t G_i^{0} + C^0_h \ll (1-\Pi)h\rl_{V,\o}^2 + C^0_\s\ll \s\rl_V^2
	\leq \frac{1}{\e C_E}\ll\(E - \einf\)e^{-\pinf} \rl^2_V,\label{final_1 of t}
\end{align}
where 
\begin{align}
\label{def of cons}
	&C^0_h = \frac{\lam\theta_i}{\e} -\frac{3}{4\e} - \frac{2C_E\thi}{\e},\qd C^0_\s \leq  \frac{ C_s}{4\e} -  C_E\(\frac{\e\thi}{2} + \frac{5}{8\e}\).
\end{align}
Since $C_E\leq \frac{\lam C_s}{8} \leq \frac{\lam}{16}$, so $\lam - 2C_E  \geq \frac{7}{8}\lam$, which implies $\(\lam - C_E\)\frac{\thi}{\e} \geq 1$ for both $i = 1,2$. Therefore $C_h^0 \geq \frac{1}{4\e}$. Since $\frac{C_s}{4} - \frac{3C_E}{8} \geq \frac{11}{64}C_s$, and $\e C_E\thi \leq \frac{\e C_s}{7}$ for both $i = 1,2$, therefore $C_\s^0 \geq \frac{11C_s}{64\e} - \frac{\e C_s}{14} \geq \frac{C_s}{10\e}$. Hence plug back $C^0_h, C^0_\s \geq \frac{C_s}{10\e}$ to (\ref{final_1 of t}), one has,
\begin{align}
	&\pt_t G_i^{0} + \frac{\eta}{\e}\ll h\rl_{V,\o}^2 \leq  \frac{1}{\e C_E}\ll\(E - \einf\)e^{-\pinf} \rl^2_V,\qd \text{for } \eta = \frac{C_s}{10}.
\end{align}

For $|\nu| > 0$, plug (\ref{est_45}) and (\ref{est_6}) into (\ref{est of t}), and based on the estimates (\ref{proc_24}), one has
\begin{align}
	&\pt_t G^\nu_i + \frac{\lam\theta_i}{\e} \ll (1-\Pi)\pt^\nu h\rl_{V,\o}^2 + \frac{ C_s}{4\e}\ll \pt^\nu\s\rl_V^2\nonumber\\
	\leq & \frac{3}{4\e} \ll (1-\Pi)\pt^\nu h\rl_{V,\o}^2 + C_E\(\( \frac{\e\thi}{2}+\frac{5}{8\e}\)\ll \pt^{\nu}\s \rl_V^2+ \frac{2\thi}{\e}\ll (1-\Pi)\pt^{\nu}h \rl_{V,\o} ^2\) \nonumber\\
	&+ \frac{1}{\e C_E}\ll \pt^\nu\(\(E - \einf\)e^{-\pinf}\) \rl^2_V +\frac{2}{\lam \e}\sum_{\nu_j\neq0} \nu_j^2C_j\ll \pt^{\nu-e_j}h \rl_V^2,
\end{align}
which implies
\begin{align}
	&\pt_t G^\nu_i + C^\nu_h \ll (1-\Pi)\pt^\nu h\rl_{V,\o}^2 + C^\nu_\s\ll \pt^\nu\s\rl_V^2\nonumber\\
	\leq&\frac{2}{\lam \e} \sum_{\nu_j\neq0} \nu_j^2C_j\ll \pt^{\nu-e_j}h \rl_V^2
	+ \frac{1}{\e C_E}\ll \pt^\nu\(\(E - \einf\)e^{-\pinf}\) \rl^2_V ,
\end{align}
for the same $C^\nu_h, C^\nu_h$ defined in (\ref{def of cons}). This completes the proof.

\bibliographystyle{plain}

\begin{thebibliography}{10}

\bibitem{caflisch1998monte}
Russel~E Caflisch.
\newblock Monte carlo and quasi-monte carlo methods.
\newblock {\em Acta numerica}, 7:1--49, 1998.

\bibitem{chkifa2014high}
Abdellah Chkifa, Albert Cohen, and Christoph Schwab.
\newblock High-dimensional adaptive sparse polynomial interpolation and
  applications to parametric pdes.
\newblock {\em Foundations of Computational Mathematics}, 14(4):601--633, 2014.

\bibitem{chkifa2015breaking}
Abdellah Chkifa, Albert Cohen, and Christoph Schwab.
\newblock Breaking the curse of dimensionality in sparse polynomial
  approximation of parametric pdes.
\newblock {\em Journal de Math{\'e}matiques Pures et Appliqu{\'e}es},
  103(2):400--428, 2015.

\bibitem{chkifa2013lebesgue}
Moulay~Abdellah Chkifa.
\newblock On the lebesgue constant of leja sequences for the complex unit disk
  and of their real projection.
\newblock {\em Journal of Approximation Theory}, 166:176--200, 2013.

\bibitem{cohen2015approximation}
Albert Cohen and Ronald DeVore.
\newblock Approximation of high-dimensional parametric pdes.
\newblock {\em Acta Numerica}, 24:1--159, 2015.

\bibitem{cohen2010convergence}
Albert Cohen, Ronald DeVore, and Christoph Schwab.
\newblock Convergence rates of best n-term galerkin approximations for a class
  of elliptic spdes.
\newblock {\em Foundations of Computational Mathematics}, 10(6):615--646, 2010.

\bibitem{cohen2011analytic}
Albert Cohen, Ronald DeVore, and Christoph Schwab.
\newblock Analytic regularity and polynomial approximation of parametric and
  stochastic elliptic pde's.
\newblock {\em Analysis and Applications}, 9(01):11--47, 2011.

\bibitem{DJL19}
Esther Daus, Shi Jin, and Liu Liu.
\newblock On the multi-species boltzmann equation with uncertainty and its
  stochastic galerkin approximation.
\newblock {\em Preprint}, 2019.

\bibitem{duan2010kinetic}
Renjun Duan, Massimo Fornasier, and Giuseppe Toscani.
\newblock A kinetic flocking model with diffusion.
\newblock {\em Communications in Mathematical Physics}, 300(1):95--145, 2010.

\bibitem{filbet2010class}
Francis Filbet and Shi Jin.
\newblock A class of asymptotic-preserving schemes for kinetic equations and
  related problems with stiff sources.
\newblock {\em Journal of Computational Physics}, 229(20):7625--7648, 2010.

\bibitem{hansen2013analytic}
Markus Hansen and Christoph Schwab.
\newblock Analytic regularity and nonlinear approximation of a class of
  parametric semilinear elliptic pdes.
\newblock {\em Mathematische Nachrichten}, 286(8-9):832--860, 2013.

\bibitem{hansen2013sparse}
Markus Hansen and Christoph Schwab.
\newblock Sparse adaptive approximation of high dimensional parametric initial
  value problems.
\newblock {\em Vietnam Journal of Mathematics}, 41(2):181--215, 2013.

\bibitem{hernandez2017greedy}
V{\'\i}ctor Hern{\'a}ndez-Santamar{\'\i}a, Martin Lazar, and Enrique Zuazua.
\newblock Greedy optimal control for elliptic problems and its application to
  turnpike problems.
\newblock 2017.

\bibitem{hu2019stochastic}
Jingwei Hu, Shi Jin, and Ruiwen Shu.
\newblock On stochastic galerkin approximation of the nonlinear boltzmann
  equation with uncertainty in the fluid regime.
\newblock {\em preprint}, 2019.

\bibitem{hwang2013vlasov}
Hyung~Ju Hwang and Juhi Jang.
\newblock On the vlasov-poisson-fokker-planck equation near maxwellian.
\newblock {\em Discrete \& Continuous Dynamical Systems-Series B}, 18(3), 2013.

\bibitem{Jin99}
Shi Jin.
\newblock Efficient asymptotic-preserving ({AP}) schemes for some multiscale
  kinetic equations.
\newblock {\em SIAM J. Sci. Comput.}, 21:441--454, 1999.

\bibitem{jin2010asymptotic}
Shi Jin.
\newblock Asymptotic preserving (ap) schemes for multiscale kinetic and
  hyperbolic equations: a review.
\newblock {\em Lecture Notes for Summer School on Methods and Models of Kinetic
  Theory(M\&MKT), Porto Ercole (Grosseto, Italy). Rivista di Matematica della
  Universita di Parma}, 3:177--216, 2012.

\bibitem{jin2016august}
Shi Jin, Jian-Guo Liu, and Zheng Ma.
\newblock Uniform spectral convergence of the stochastic galerkin method for
  the linear transport equations with random inputs in diffusive regime and a
  micro-macro decomposition based asymptotic preserving method.
\newblock {\em Research in Math. Sci.,(a special issue in honor of the 70th
  birthday of Bjorn Engquist)}, 4:15, 2017. DOI 10.1186/s40687-017-0105-1.

\bibitem{Jin-LiuL}
Shi Jin and Liu Liu.
\newblock An asymptotic-preserving stochastic galerkin method for the
  semiconductor boltzmann equation with random inputs and diffusive scalings.
\newblock {\em SIAM Multiscale Model. and Simult.}, 15:157--183, 2017.

\bibitem{jin2011asymptotic}
Shi Jin and Li~Wang.
\newblock An asymptotic preserving scheme for the vlasov-poisson-fokker-planck
  system in the high field regime.
\newblock {\em Acta Mathematica Scientia}, 31(6):2219--2232, 2011.

\bibitem{jin2011class}
Shi Jin and Bokai Yan.
\newblock A class of asymptotic-preserving schemes for the
  fokker--planck--landau equation.
\newblock {\em Journal of Computational Physics}, 230(17):6420--6437, 2011.

\bibitem{ZhuJin18}
Shi Jin and Yuhua Zhu.
\newblock Hypocoercivity and uniform regularity for the
  vlasov-poisson-fokker-planck system with uncertainty and multiple scales.
\newblock {\em SIAM J. Math. Anal.}, 50:1790--1816, 2018.

\bibitem{kunoth2013analytic}
Angela Kunoth and Christoph Schwab.
\newblock Analytic regularity and gpc approximation for control problems
  constrained by linear parametric elliptic and parabolic pdes.
\newblock {\em SIAM Journal on Control and Optimization}, 51(3):2442--2471,
  2013.

\bibitem{kuo2012quasi}
Frances~Y Kuo, Christoph Schwab, and Ian~H Sloan.
\newblock Quasi-monte carlo finite element methods for a class of elliptic
  partial differential equations with random coefficients.
\newblock {\em SIAM Journal on Numerical Analysis}, 50(6):3351--3374, 2012.

\bibitem{kuo2015multi}
Frances~Y Kuo, Christoph Schwab, and Ian~H Sloan.
\newblock Multi-level quasi-monte carlo finite element methods for a class of
  elliptic pdes with random coefficients.
\newblock {\em Foundations of Computational Mathematics}, 15(2):411--449, 2015.

\bibitem{lazar2016greedy}
Martin Lazar and Enrique Zuazua.
\newblock Greedy controllability of finite dimensional linear systems.
\newblock {\em Automatica}, 74:327--340, 2016.

\bibitem{li2016uniform}
Qin Li and Li~Wang.
\newblock Uniform regularity for linear kinetic equations with random input
  based on hypocoercivity.
\newblock {\em SIAM/ASA J. Uncertainty Quantification}, 5(1):1193--1219, 2017.

\bibitem{liu2018hypocoercivity}
Liu Liu and Shi Jin.
\newblock Hypocoercivity based sensitivity analysis and spectral convergence of
  the stochastic galerkin approximation to collisional kinetic equations with
  multiple scales and random inputs.
\newblock {\em Multiscale Modeling \& Simulation}, 16(3):1085--1114, 2018.

\bibitem{loeve1978probability1}
Michel Loeve.
\newblock {\em Probability theory, I}.
\newblock Springer-Verlag New York-Heidelberg, 1978.

\bibitem{loeve1978probability2}
Michel Loeve.
\newblock {\em Probability theory, II}.
\newblock Springer-Verlag New York-Heidelberg, 1978.

\bibitem{morokoff1995quasi}
William~J Morokoff and Russel~E Caflisch.
\newblock Quasi-monte carlo integration.
\newblock {\em Journal of computational physics}, 122(2):218--230, 1995.

\bibitem{nobile2008anisotropic}
Fabio Nobile, Raul Tempone, and Clayton~G Webster.
\newblock An anisotropic sparse grid stochastic collocation method for partial
  differential equations with random input data.
\newblock {\em SIAM Journal on Numerical Analysis}, 46(5):2411--2442, 2008.

\bibitem{nobile2008sparse}
Fabio Nobile, Ra{\'u}l Tempone, and Clayton~G Webster.
\newblock A sparse grid stochastic collocation method for partial differential
  equations with random input data.
\newblock {\em SIAM Journal on Numerical Analysis}, 46(5):2309--2345, 2008.

\bibitem{rozza2007reduced}
Gianluigi Rozza, Dinh Bao~Phuong Huynh, and Anthony~T Patera.
\newblock Reduced basis approximation and a posteriori error estimation for
  affinely parametrized elliptic coercive partial differential equations.
\newblock {\em Archives of Computational Methods in Engineering}, 15(3):1,
  2007.

\bibitem{schillings2014sparsity}
Cl~Schillings and Ch~Schwab.
\newblock Sparsity in bayesian inversion of parametric operator equations.
\newblock {\em Inverse Problems}, 30(6):065007, 2014.

\bibitem{schillings2013sparse}
Claudia Schillings and Christoph Schwab.
\newblock Sparse, adaptive smolyak quadratures for bayesian inverse problems.
\newblock {\em Inverse Problems}, 29(6):065011, 2013.

\bibitem{schwab2012sparse}
Christoph Schwab and Andrew~M Stuart.
\newblock Sparse deterministic approximation of bayesian inverse problems.
\newblock {\em Inverse Problems}, 28(4):045003, 2012.

\bibitem{schwab2006karhunen}
Christoph Schwab and Radu~Alexandru Todor.
\newblock Karhunen--lo{\`e}ve approximation of random fields by generalized
  fast multipole methods.
\newblock {\em Journal of Computational Physics}, 217(1):100--122, 2006.

\bibitem{Jintwophase}
Ruiwen Shu and Shi Jin.
\newblock Uniform regularity in the random space and spectral accuracy of the
  stochastic galerkin method for a kinetic-fluid two-phase flow model with
  random initial inputs in the light particle regime.
\newblock {\em ESAIM: Mathematical Modelling and Numerical Analysis},
  52(5):1651--1678, 2018.

\bibitem{todor2006robust}
Radu~Alexandru Todor.
\newblock Robust eigenvalue computation for smoothing operators.
\newblock {\em SIAM journal on numerical analysis}, 44(2):865--878, 2006.

\bibitem{ZDS18_759}
J.~Zech, D.~Dung, and Ch. Schwab.
\newblock Multilevel approximation of parametric and stochastic pdes.
\newblock Technical Report 2018-05, Seminar for Applied Mathematics, ETH
  Z{\"u}rich, Switzerland, 2018.

\bibitem{Zhu2017BE}
Yuhua Zhu.
\newblock Sensitivity analysis and uniform regularity for the boltzmann
  equation with uncertainty and its stochastic galerkin approximation.
\newblock Preprint.

\end{thebibliography}

\end{document}